\newtheorem{thm}{Theorem}[section]
\newtheorem*{thm*}{Theorem}
\newtheorem{cor}[thm]{Corollary}
\newtheorem*{cor*}{Corollary}
\newtheorem{prop}[thm]{Proposition}
\newtheorem*{prop*}{Proposition}
\newtheorem{lem}[thm]{Lemma}
\newtheorem*{lem*}{Lemma}
\newtheorem*{conj*}{Conjecture}
\newtheorem*{quest*}{Question}
\theoremstyle{definition}
\theoremstyle{remark}
\newtheorem{rem}[thm]{Remark}
\newtheorem*{ack*}{Acknowledgements}
\newcommand{\Z}{\mathbb{Z}}
\newcommand{\F}{\mathcal{F}} 
\newcommand{\G}{\mathcal{G}}
\newcommand{\X}{\mathbb{X}} 
\newcommand{\ZZ}{\mathcal{Z}} 
\newcommand{\T}{\mathcal{T}} 
\title[Betti tables for matrix factorizations of $XY(X-Y)(X-\lambda Y)$]{Betti tables for indecomposable matrix factorizations of $XY(X-Y)(X-\lambda Y)$}
\author{Vincent G\'elinas}
\thanks{The author is affiliated with the University of Toronto.}
\date{}
\begin{document}

\begin{abstract} 
	We classify the Betti tables of indecomposable graded matrix factorizations over the simple elliptic singularity $f_\lambda = XY(X-Y)(X-\lambda Y)$ by making use of an associated weighted projective line of genus one.  
\end{abstract}

\maketitle
\tableofcontents

\section{Introduction}
Let $\Lambda$ be a Cohen-Macaulay algebra of infinite CM-type, occuring as the local algebra of a reduced curve singularity over an algebraically closed field $k$. Drozd and Greuel \cite{DG} proved (in $char\ k \neq 2$) that $\Lambda$ is of tame CM-type if and only if it birationally dominates a curve singularity of type $T_{pq}$ given by the equation $X^p + \alpha X^2Y^2 + Y^q = 0\ (\alpha \neq 0, 1)$. Furthermore, $\Lambda$ is tame of finite growth if and only if it dominates a curve of type $T_{44}$ or $T_{36}$. Tameness of the $T_{pq}$ singularities can be shown indirectly via deformation theory, with abstract classifications known from \cite{DGav}. Tameness of $T_{44}$ in particular was established by Dieterich \cite{Diet} who related it to a particular tubular quiver path algebra. In \cite{DT}, Drozd and Tovpyha raised the question of finding explicit presentations of indecomposable maximal Cohen-Macaulay modules over the completion of $T_{44} \sim XY(X-Y)(X-\lambda Y)\ (\lambda \neq 0, 1)$, or equivalently to produce the indecomposable matrix factorizations of $f_\lambda$. They reduced this problem to a ``matrix problem'', namely representations of bunches of chains, and used this to produce some of the indecomposables.

In this paper we investigate the indecomposable graded matrix factorizations of $T_{44}$ using triangulated category methods, and produce a classification closer in spirit to the work of Dieterich. One can deal with $T_{36}$ with similar methods but for simplicity we will restrict ourselves to the case $T_{44}$. We deduce a complete classification of Betti tables of indecomposable graded MCM modules over $k[X,Y]/f_{\lambda}$. We will see that all MCM modules over the algebra $k[[X,Y]]/f_\lambda$ are gradable and so this subsumes the above problem. The main result of this paper is (see \ref{bettifirstkind}, \ref{bettisecondkind}):
\begin{thm*}There is a classification of Betti tables of indecomposable graded matrix factorizations of $f_\lambda$. Up to internal degree shifts, these fall into $5$ general and $4$ exceptional shapes (respectively $3$ and $2$ up to shifts and syzygies). 
\end{thm*}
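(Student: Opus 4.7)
The plan is to exploit the association between $R = k[X,Y]/f_\lambda$ and a weighted projective line $\mathbb{X}$ of weight type $(2,2,2,2)$ and parameter $\lambda$ flagged in the abstract. Under this association, the graded singularity category $\underline{\mathrm{MCM}}^{\mathbb{Z}}(R)$ is identified (up to predictable shifts) with a category built from $D^b(\mathrm{coh}\,\mathbb{X})$, where $\mathbb{X}$ is tubular of genus one. Classifying indecomposable graded matrix factorizations of $f_\lambda$ thereby reduces to classifying indecomposable objects on the $\mathbb{X}$-side.

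I would next invoke the Geigle-Lenzing and Lenzing-Meltzer classification of indecomposables on a tubular $\mathbb{X}$: they distribute into tubular families $\mathcal{T}_q$ indexed by slope $q \in \mathbb{Q} \cup \{\infty\}$, each $\mathcal{T}_q$ itself parametrized by $\mathbb{X}$ and consisting of a $\mathbb{P}^1$-worth of homogeneous tubes of rank $1$ together with $4$ exceptional tubes of rank $2$ sitting over the four weighted points. The telescopic $\widetilde{\mathrm{SL}}_2(\mathbb{Z})$-action on $D^b(\mathrm{coh}\,\mathbb{X})$ acts transitively on slope classes, so up to autoequivalence the Betti table shape of an indecomposable depends only on whether the object lies in a homogeneous tube or an exceptional tube, together with its quasi-length (and quasi-socle in the rank-$2$ case). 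For each such combinatorial class I would lift a representative back to a graded MCM module and compute its minimal free resolution over $S = k[X,Y]$; the matrix factorization $2$-periodicity (modulo the internal twist by $-4$) reduces this to determining the ranks and internal shifts of two consecutive matrices.

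The main obstacle I expect is making the $R$-to-$\mathbb{X}$ dictionary sufficiently explicit to read off graded Betti numbers from sheaf-theoretic data. Concretely one chooses a tilting object on $\mathbb{X}$, the canonical Geigle-Lenzing tilting bundle being the natural candidate, follows it across the equivalence to locate the corresponding generators of $\underline{\mathrm{MCM}}^{\mathbb{Z}}(R)$ together with their internal-degree information, and then resolves each indecomposable against them; rank and degree on $\mathbb{X}$ then translate into the ranks and shifts of the matrix factorization. Once this dictionary is in place, the enumeration becomes combinatorial: quasi-length varies over the positive integers in each tube, but Betti table shapes see only its residue modulo the tube rank and the matrix factorization period, which should collapse the possibilities to exactly $5$ general and $4$ exceptional shapes, further reducing to $3$ and $2$ after quotienting by syzygy and internal degree shift.
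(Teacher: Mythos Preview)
Your overall framework is right: the paper does pass through an equivalence $\underline{\rm MCM}(gr R) \cong {\rm D}^b(\X)$ for $\X = \mathbb{P}^1(2,2,2,2;\lambda)$, and the Lenzing--Meltzer slope decomposition into tubes is what organizes the indecomposables. The gap is in your reduction step. The telescopic $\widetilde{SL}_2(\Z)$ action on ${\rm D}^b(\X)$ is transitive on slopes, but it does \emph{not} preserve Betti tables, even up to internal shift and syzygy: the Betti numbers are $\beta_{i,j}(M) = \dim_k \underline{\rm Ext}^i(M, k^{st}(-j))$, so they record morphisms into a \emph{fixed} family of objects, and tubular mutations move those targets as well. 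For instance $\mathcal{O}$ and a simple torsion sheaf $S_{i,0}$ are both quasi-simple in rank-two tubes and are related by a tubular mutation, yet the corresponding MCM modules $k^{st}(-2)$ and $L_i(-3)[1]$ have Betti tables of entirely different shape (first kind versus type III in the paper's nomenclature). So the claim that the shape depends only on tube type and a residue of quasi-length is false; the final templates carry genuine integer parameters $(a,b)$ or $r$ coming from $(rk, deg)$, and the five-plus-four count arises from parity and tube constraints on $(r,d)$, not from a finite orbit set under mutations.

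What the paper does instead is compute the images of $k^{st}(-j)$ for $j=0,1,2,3$ under the equivalence explicitly: they land on $\omega_\X[1]$, $\mathcal{O}(-\vec{c})[1]$, $\mathcal{O}$, $\mathcal{O}(-\vec{c})\otimes\omega_\X$. This converts each $\beta_{i,j}$ into a sheaf cohomology dimension $h^i(\F_M \otimes \mathcal{L})$ for a specific line bundle $\mathcal{L}$. One then fixes a fundamental domain for the action of the grading shift $(1)$ on the $(r,d)$-lattice, kills half of each table by slope vanishing, and determines the remaining entries from $(r,d,\chi(\F))$ via weighted Riemann--Roch. The one subtle input, the possible values of $\chi(\F)$ for indecomposables in rank-two tubes with $\gcd(r,d)$ odd, is read off from the list of real roots of affine $\widetilde{D}_4$ using Crawley-Boevey's Kac-type theorem. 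No representative is ever lifted back to an explicit MCM module to compute a resolution; that route would have to be carried out for infinitely many $(r,d)$ and is what the cohomological dictionary is designed to avoid.
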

We also describe the set of indecomposables with fixed Betti table, characterize exceptional objects by a numerical criterion, study natural functionals on $K_0$ and give a classification of the indecomposable Ulrich modules over $k[X,Y]/f_\lambda$. Our work is close in nature to that of A. Pavlov who classified the Betti tables of matrix factorizations of a Hesse plane cubic in \cite{Pav}, although the presence of exceptional objects in our setting adds a layer of complexity.  

Order the linear factors as $XY(X-Y)(X-\lambda Y) = l_1l_2l_3l_4$. We will study the classification of indecomposable graded matrix factorizations of $f_\lambda$ by means of the finite dimensional ``Squid algebra'' $Sq(2,2,2,2; \lambda)$, which is the path algebra of the quiver
	\[
		\xymatrix@C3pc@R0.3pc{	&& 1 \\ 
				&& 2 \\
				5 \ar@<.5ex>[r]^{X} \ar@<-.5ex>[r]_{Y} & 6 \ar[ruu]^{p_1} \ar[ru]_{p_2} \ar[rd]^{p_3} \ar[rdd]_{p_4} \\
				&& 3 \\
				&& 4 \\
}
	\]
	subject to the relations $p_i l_i(X,Y) = 0$. The algebra $Sq(2,2,2,2;\lambda)$ arises as the endomorphism algebra of a tilting sheaf on the weighted projective line $\X = \mathbb{P}^1(2,2,2,2;\lambda)$ introduced by Geigle and Lenzing, and this will reduce most of our work to sheaf cohomology calculations on the latter. More precisely we will use

	\begin{thm*}[Buchweitz-Iyama-Yamaura] There is a tilting MCM module $T$ with $\underline{{\rm End}}_{gr R}(T) \cong Sq(2,2,2,2;\lambda)$, and so we have induced equivalences of triangulated categories
		\[
			\underline{{\rm MCM}}(gr\ S/f_\lambda) \cong {\rm D}^b(Sq(2,2,2,2;\lambda)) \cong {\rm D}^b(\X).
		\]
	\end{thm*}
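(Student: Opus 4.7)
The strategy is to construct a tilting MCM module directly and identify its endomorphism algebra, then invoke derived Morita theory and Geigle-Lenzing's tilting theorem. By Buchweitz's result, $\underline{{\rm MCM}}(gr\ R)$ with $R = k[X,Y]/f_\lambda$ is triangle-equivalent to the graded singularity category ${\rm D}^b_{sg}(gr\ R)$, so it suffices to exhibit a tilting object there and identify its endomorphism algebra with $Sq(2,2,2,2;\lambda)$.

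The natural candidate is $T = R(-1) \oplus R \oplus \bigoplus_{i=1}^4 R/(l_i)$, whose six indecomposable summands are in bijection with the six vertices of $Sq(2,2,2,2;\lambda)$: the class of $R(-1)$ corresponds to vertex $5$, the class of $R$ to vertex $6$, and each $R/(l_i)$ to vertex $i$. The multiplication maps $X, Y : R(-1) \to R$ furnish the arrows out of vertex $5$; the quotient maps $p_i : R \to R/(l_i)$ furnish the arrows out of vertex $6$; and the squid relations $p_i \circ l_i(X,Y) = 0$ hold automatically because $l_i$ annihilates $R/(l_i)$. This defines a map from the path algebra $Sq(2,2,2,2;\lambda)$ to $\underline{{\rm End}}_{gr\ R}(T)$, which one verifies is an isomorphism by computing each graded Hom space from the matrix factorization $(l_i,\, f_\lambda/l_i)$ of $R/(l_i)$ and using that distinct linear forms intersect transversally at the origin.

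To establish that $T$ is tilting one must verify $\underline{{\rm Ext}}^k_{gr\ R}(T,T) = 0$ for $k \neq 0$ and that $T$ generates $\underline{{\rm MCM}}(gr\ R)$ as a thick subcategory. The Ext-vanishing follows by direct calculation from the above matrix factorizations, making crucial use of transversality of the four linear factors. Once tiltingness is in hand, Rickard-Keller derived Morita theory yields $\underline{{\rm MCM}}(gr\ R) \cong {\rm D}^b(Sq(2,2,2,2;\lambda))$, and then Geigle-Lenzing's theorem---that the canonical sheaf $\bigoplus_{0 \leq \vec{l} \leq \vec{c}} \mathcal{O}(\vec{l})$ on $\X = \mathbb{P}^1(2,2,2,2;\lambda)$ is tilting with endomorphism algebra $Sq(2,2,2,2;\lambda)$---provides the second equivalence ${\rm D}^b(Sq(2,2,2,2;\lambda)) \cong {\rm D}^b(\X)$.

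The main obstacle is the generation statement: showing that the thick closure of $T$ fills out all of $\underline{{\rm MCM}}(gr\ R)$. A hands-on approach realizes each graded shift of the residue field $k$ as an iterated cone involving summands of $T$ together with their syzygies, then invokes that shifts of $k$ generate. The cleaner conceptual route, following Buchweitz-Iyama-Yamaura, passes through the $L$-graded homogeneous coordinate ring of $\X$: its Gorenstein parameter vanishes because $\sum \vec{x}_i = 2\vec{c}$ in $L(2,2,2,2)$, so Orlov's theorem supplies the equivalence $\underline{{\rm MCM}} \cong {\rm D}^b(\X)$ \emph{ab initio}, and generation of the image of $T$ is then automatic from classical generation of the canonical tilting sheaf on $\X$.
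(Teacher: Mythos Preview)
Your proposed tilting object has a fatal defect: the summands $R(-1)$ and $R$ are free, hence projective, hence \emph{zero} in the stable category $\underline{{\rm MCM}}(gr\,R)$. Consequently $T = R(-1)\oplus R\oplus\bigoplus_i R/(l_i)$ is stably isomorphic to $\bigoplus_i L_i$, the maps $X,Y:R(-1)\to R$ vanish in $\underline{{\rm Hom}}$, and $\underline{{\rm End}}_{gr\,R}(T)$ collapses to $k^4$ rather than the six-vertex squid algebra. The paper replaces your two free summands by the non-projective MCM modules $\mathfrak m(1)$ and $\mathfrak m^2(2)$; these arise naturally because Orlov's semiorthogonal decomposition (with cutoff $-a=-2$) produces the full exceptional sequence $\langle k(2),k(1),L_1,\dots,L_4\rangle$ in $\underline{{\rm MCM}}(gr\,R)$, which after a right mutation becomes the strong collection $\langle\mathfrak m(1),\mathfrak m^2(2),L_1,\dots,L_4\rangle$. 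Generation is then automatic from Orlov's theorem, so no separate argument is needed.

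Two smaller issues. First, the endomorphism algebra of the Geigle--Lenzing canonical tilting sheaf $\bigoplus_{\vec 0\le\vec x\le\vec c}\mathcal O(\vec x)$ is the \emph{canonical} algebra, not the squid algebra; the paper invokes a different tilting sheaf (Lenzing--Meltzer) to land on $Sq(2,2,2,2;\lambda)$. Second, your ``cleaner route'' asserts that the Gorenstein parameter of the $\mathbb L$-graded coordinate ring vanishes because $\sum\vec x_i=2\vec c$; but in $\mathbb L(2,2,2,2)$ the element $\vec\omega=2\vec c-\sum_i\vec x_i$ is nonzero $2$-torsion (indeed $\omega_\X\not\cong\mathcal O_\X$), so that identity fails and Orlov's theorem does not give an equivalence on the nose there either.
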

This result is a special case of a recent result of Buchweitz-Iyama-Yamaura \cite{BIY}, and the author has learned it from Buchweitz. That matrix factorizations of $f_\lambda$ be related to $\X = \mathbb{P}^1(2,2,2,2;\lambda)$ should also be well-known to experts. Indeed one can, at least in $char\ k \neq 2$, compare $\Z$-graded matrix factorizations of $f_\lambda$ to $\Z \oplus \Z_2$-graded matrix factorizations of $f_\lambda + z^2$ by results of Kn\"orrer. The elliptic curve $E_\lambda = \{ f_\lambda + z^2 = 0 \}$ in $\mathbb{P}(1,1,2)$ is a branched double cover $\pi: E_\lambda \to \mathbb{P}^1$ ramified over $\{f_\lambda = 0 \}$, with hyperelliptic involution $\sigma$. One should then obtain from Kn\"orrer's and Orlov's theorem an equivalence 
	\[
		{\rm HMF}^{\Z}(S, f_\lambda) \cong {\rm HMF}^{\Z \oplus \Z_2}(S[z], f_\lambda + z^2) \cong {\rm D}^b({\rm coh}_\sigma (E_\lambda))
\]
with the derived category of $\sigma$-equivariant coherent sheaves on $E_\lambda$. Coherent sheaves on the weighted projective line $\mathbb{P}^1(2,2,2,2;\lambda)$ are equivalent to the latter in $char\ k \neq 2$, and so we will prefer the above characteristic-free setup via tilting theory, where calculations are more tractable in any case. 


\section{Background and setup}
Throughout this paper, modules will refer to finitely generated right graded modules. We denote by $M \mapsto M(i)$ the grade shift autoequivalence with $M(i)_n = M_{n+i}$. Fix a field $k$ and let $f \in S = k[x_1, \dots, x_n]$ be a homogeneous polynomial of degree $d$. The triangulated homotopy category of graded matrix factorizations ${\rm HMF}^{\Z}(S, f)$ has for objects pairs of morphisms $(\varphi, \psi)$ of $S$-free modules 
\[
	F \xleftarrow{\varphi} G \xleftarrow{\phi} F(-d)
\]
with compositions $\varphi \psi = f \cdot {\rm Id}$ and $\psi \varphi = f \cdot {\rm Id}$, and for morphisms the natural notion of homotopy classes of chain-maps. Matrix factorizations give $S$-free presentations of graded maximal Cohen-Macaulay (MCM) modules over $R = S/f$, with the short resolution
\[
	0 \leftarrow M \leftarrow F \xleftarrow{\varphi} G \leftarrow 0
\]
of $M = {\rm coker}(\varphi)$ descending to a $2$-periodic $R$-free resolution 
\[
P_*:	0 \leftarrow M \leftarrow \overline{F} \xleftarrow{\varphi} \overline{G} \xleftarrow{\psi} \overline{F}(-d) \xleftarrow{\varphi} \overline{G}(-d) \leftarrow \dots
\]
This induces an equivalence of triangulated categories ${\rm HMF}^{\Z}(S, f) \cong \underline{{\rm MCM}}(gr R)$ with the projectively stable category of graded MCM modules. The latter inherits a triangulated structure from Buchweitz's equivalence \cite{buchweitzmanuscript}
\[
	{\rm D}_{sg}(gr R) = {\rm D}^b(gr R)/{\rm D}^{perf}(gr R) \cong \underline{{\rm MCM}}(gr R)
\]
or equivalently from an equivalence with the homotopy category of complete resolutions, obtained by extending the above resolution of $M$ by periodicity to the acyclic complex of graded free modules
\[
	C_*: 	\cdots \leftarrow F(d) \xleftarrow{\varphi} G(d) \xleftarrow{\psi} \overline{F} \xleftarrow{\varphi} \overline{G} \xleftarrow{\psi} \overline{F}(-d) \xleftarrow{\varphi} \overline{G}(-d) \leftarrow \dots
\]
We have for suspension $\Sigma M = M[1] = {\rm cosyz}_R(M)$ with inverse $\Omega M = M[-1] = {\rm syz}_R(M)$. The category $\underline{{\rm MCM}}(gr R)$ also has the grade shift exact autoequivalence $M \mapsto M(1)$. In particular note the $2$-periodicity natural isomorphism $(d) = [2]$. 

Define the Tate cohomology groups $\underline{{\rm Ext}}_{gr R}^n(M, N)$ for $M$ an MCM module and $N$ any module by ${\rm H}^n{\rm Hom}_{gr R}(C_*, N)$ for any $n \in \Z$. The module $N$ admits an MCM approximation $N^{st}$ fitting in a short exact sequence
\[
	0 \to Q \to N^{st} \to N \to 0
\]
with $Q$ perfect, so that $N$ is sent to $N^{st}$ under Buchweitz's equivalence. Tate cohomology vanishes against perfect complexes, and we have $\underline{{\rm Ext}}^n_{gr R}(M, N) = \underline{{\rm Ext}}^n_{gr R}(M, N^{st}) = \underline{{\rm Hom}}_{gr R}(M, N^{st}[n])$. We define the (complete) graded Betti numbers $\beta_{i,j}$ of $M$ by
\[
	\beta_{i,j} = {\rm dim}_k\ \underline{{\rm Ext}}^i_{gr R}(M, k(-j)) = {\rm dim}_k\ \underline{{\rm Ext}}^i_{gr R}(M, k^{st}(-j)).
\]
When the resolution $P_*$ is minimal, it follows that $\overline{F} = \bigoplus_{j \in \Z}R(-j)^{\oplus \beta_{0, j}}$ and $\overline{G} = \bigoplus_{j \in \Z} R(-j)^{\oplus \beta_{1,j}}$, with the other Betti numbers obtained by periodicity $\beta_{i, j} = \beta_{i+2, j+4}$. Hence calculating Betti tables reduces to calculating dimensions of morphism spaces into the various MCM modules $k^{st}(-j)$. We will need the following:

\begin{prop}\label{Se} Assume that $R$ has isolated singularities. Then $$\mathbb{S}_R(-) = - \otimes_R \omega_R[{\rm dim}\ R - 1] = (a)[{\rm dim}\ R - 1]$$ is a Serre functor for $\underline{{\rm MCM}}(gr R)$, where $a = d-n$.
\end{prop}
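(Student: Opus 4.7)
The statement asserts two facts: $\omega_R \cong R(a)$, and $-\otimes_R \omega_R[\dim R - 1]$ is a Serre functor on $\underline{\rm MCM}({\rm gr}\, R)$. The plan is to handle these independently, with essentially all of the work going into the second.

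For the identification $\omega_R \cong R(a)$, I would apply the standard adjunction formula for a graded hypersurface. With $S = k[x_1,\dots,x_n]$ in its usual grading, so that $\omega_S = S(-n)$, the short exact sequence $0 \to S(-d) \xrightarrow{f} S \to R \to 0$ combined with $\omega_R \cong {\rm Ext}^1_S(R, \omega_S)$ yields $\omega_R \cong R(d-n) = R(a)$. Tensoring with $\omega_R$ over $R$ is therefore the internal degree shift $(a)$, so the two expressions in the proposition agree, and it suffices to verify that $\mathbb{S}_R := (a)[\dim R - 1]$ is a Serre functor.

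For the Serre property, two pieces are needed. First, the isolated-singularity hypothesis makes the stable Hom spaces finite-dimensional: each $\underline{\rm Hom}_{{\rm gr}\, R}(M, N)$ is a finitely generated graded $R$-module whose support is contained in the singular locus $\{\mathfrak{m}\}$, hence of finite length, and so finite-dimensional in each internal degree. Second, the required functorial perfect pairing
\[
\underline{\rm Hom}_{{\rm gr}\, R}(M, N) \times \underline{\rm Hom}_{{\rm gr}\, R}(N, \mathbb{S}_R M) \to k
\]
is a graded form of Auslander-Reiten-Serre duality, which I would derive from graded local duality for the Gorenstein ring $R$ (using that $H^{\dim R}_\mathfrak{m}(R)$ is graded Matlis dual to $R(a)$) together with the comparison of stable Homs with Tate cohomology of MCM modules. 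The shift by $\dim R - 1$ rather than $\dim R$ reflects the passage from the bounded derived category to the singularity category, where perfect complexes (and thereby one cohomological degree) are quotiented.

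The main obstacle is the construction or invocation of this Auslander-Reiten-Serre duality in the graded setting. Since this is by now a standard result for graded Gorenstein rings with isolated singularities (e.g.\ Iyama-Takahashi), in practice I would cite their theorem rather than reprove the pairing from scratch; the remaining content of the proposition is then the explicit calculation $\omega_R = R(a)$ above.
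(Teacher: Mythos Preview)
The paper does not supply a proof of this proposition; it is stated as a known result and used without further justification. Your sketch is a correct outline of the standard argument: the identification $\omega_R \cong R(a)$ via the adjunction formula for a graded hypersurface, Hom-finiteness from the isolated-singularity assumption, and the Serre duality pairing from graded Auslander--Reiten duality for Gorenstein rings (as in Auslander's original work or Iyama--Takahashi in the graded setting). Since the paper itself treats the statement as background, your proposal in fact goes beyond what the paper provides, and citing the relevant literature as you suggest is exactly what one would do here.
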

Since $R$ is graded connected, the category $\underline{{\rm MCM}}(gr R)$ is Ext-finite and Krull-Schmidt, and in particular idempotent complete. 

\subsection*{Weighted projective lines} 
We refer to \cite{GL, LM, Melt} for basic definitions and results, see also \cite{CB}. In particular we will only use the weighted projective line of genus one $\X = \mathbb{P}^1({\bf p}, \pmb{\lambda}) = \mathbb{P}^1(2,2,2,2; \lambda)$ with $\pmb{\lambda} = (0, 1, \infty, \lambda)$. The derived categories of weighted projective lines of genus one were thoroughly investigated by Lenzing and Meltzer in \cite{LM, Melt}.

We fix the notation for the rest of the paper. Recall that to a general set of weights ${\bf p} = (p_1, \dots, p_n)$ one associates a rank one abelian group $\mathbb{L} = \mathbb{L}({\bf p})$ with 
\[
	\mathbb{L}({\bf p}) = \langle \vec{x}_1, \dots, \vec{x}_n, \vec{c} \ | \ p_1 \vec{x}_1 = \dots = p_n \vec{x}_n = \vec{c} \rangle.
\]
Setting $p = {\rm lcm}(p_1, \dots, p_n)$, there is a group homomorphism $\delta: \mathbb{L} \to \Z$ given by $\delta(\vec{x}_i) = \frac{p}{p_i}$, with finite kernel. We denote by $\vec{\omega} = (n-2)\vec{c} - \sum_{i=1}^n \vec{x}_i$ the canonical element in $\mathbb{L}$. We have an isomorphism of abelian groups $\mathbb{L} \cong {\rm Pic}(\X)$ sending $\vec{x} \mapsto \mathcal{O}(\vec{x})$, and we denote by $\omega_\X = \mathcal{O}(\vec{\omega})$ the canonical line bundle of $\X$. 

Every coherent sheaf on $\X$ is the direct sum of its torsion subsheaf and quotient torsion-free sheaf, which is then a vector bundle. Vector bundles admit finite filtrations by line bundles. The indecomposable torsion sheaves are supported over a single point $x \in \mathbb{P}^1(k)$. We say that $x$ is ordinary if it lies outside of the set $\pmb{\lambda}$, and exceptional otherwise. Torsion sheaves supported over $x$ form a serial abelian subcategory, with unique simple sheaf over $x$ ordinary and $p_i$-many simple sheaves $\{S_{i, j} \}_{j \in \Z/p_i\Z}$ over $x = \lambda_i$ exceptional. These have presentations
\[
	0 \to \mathcal{O}((j-1)\vec{x}_i) \to \mathcal{O}(j\vec{x}_i) \to S_{i, j} \to 0.
\]
In particular we single out $S_{i, 0}$ as the unique simple sheaf with a non-zero section\footnote{This agrees with the notation in \cite{CB} but disagrees with \cite{Melt}.}, and we have ${\rm Hom}(\mathcal{O}, S_{i, 0}) = k$ and $S_{i, j} \otimes \omega_\X = S_{i, j+1}$. 
There is a family of indecomposable ``ordinary '' torsion sheaves $S_x$ for any $x$, with presentation 
\[
	0 \to \mathcal{O}(-\vec{c}) \to \mathcal{O} \to S_x \to 0.
\]
The sheaf $S_x$ has length one when $x$ is ordinary and length $p_i$ over $x = \lambda_i$, with the $S_{i, j}$ as simple composition factors. Lastly, there are additive rank and degree functionals on $K_0(\X) := K_0({\rm coh}(\X))$, uniquely determined by their value on line bundles as
\begin{align*}
	& rk(\mathcal{O}(\vec{x})) = 1 \\
	& deg(\mathcal{O}(\vec{x})) = \delta(\vec{x}). 
\end{align*}
In particular $deg(S_x) = deg(\mathcal{O}(\vec{c})) = p$ and $deg(S_{i,j}) = 1$.

\subsection*{Mutations of exceptional pairs} Given objects $A, B, C$ in an Ext-finite $k$-linear triangulated category $\T$, denote by ${\rm RHom}(A, B) = \bigoplus_{n \in \Z} {\rm Hom}_\T(A, B[n])[-n]$ the object in ${\rm D}^b(k)$, and define
\[
	{\rm RHom}(A, B)\otimes_k C = \bigoplus_{n \in \Z}{\rm Hom}_\T(A, B[n])\otimes_k C[-n].
\]
Interpret the dual ${\rm RHom}(A, B)^*$ in ${\rm D}^b(k)$ accordingly. Following Gorodentsev, we have canonical distinguished triangles in $\T$
\[
	L_A(B)[-1] \to	{\rm RHom}(A, B) \otimes_k A \xrightarrow{ev} B \to L_A(B)
\]
\[
	R_A(B) \to A \xrightarrow{coev} {\rm RHom}(A, B)^* \otimes B \to R_A(B)[1]
\]

which can be taken to define $L_A(B)$, $R_A(B)$. We have the standard result.
\begin{prop}[Gorodentsev, \cite{Rud}]\label{braid} Let $(E, F)$ be an exceptional pair in $\T$. The operations $L, R$ descend to an action on the set of exceptional pairs 
	\begin{align*}
		R:& (E, F) \mapsto (F, R_E(F))\\
		L:& (E, F) \mapsto (L_E(F), E).
	\end{align*}
	Furthermore, $R, L$ are inverses in that $L \circ R (E, F) \simeq (E, F)$ and $R \circ L (E, F) \simeq (E, F)$.
\end{prop}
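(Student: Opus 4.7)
The plan is to verify in turn the three claims of the proposition: that $(F, R_E(F))$ is an exceptional pair, that $(L_E(F), E)$ is an exceptional pair, and that $L, R$ are mutually inverse on pairs. In every case the main tool is to apply ${\rm RHom}(-, X)$ or ${\rm RHom}(X, -)$ to one of the defining triangles and read off the conclusion from the resulting long exact sequence, using exceptionality of $E$ and $F$ together with the orthogonality ${\rm RHom}(F, E) = 0$.

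For the first claim, apply ${\rm RHom}(-, F)$ to the triangle defining $R_E(F)$: since $F$ is exceptional, the identification ${\rm RHom}({\rm RHom}(E,F)^* \otimes F, F) \cong {\rm RHom}(E,F)$ holds, and the induced arrow identifies with the identity of ${\rm RHom}(E,F)$ (this being the defining property of ${\rm coev}$), which forces ${\rm RHom}(R_E(F), F) = 0$. To see that $R_E(F)$ is itself exceptional, apply ${\rm RHom}(R_E(F), -)$ to the same triangle: the third term vanishes by the orthogonality just established, so ${\rm RHom}(R_E(F), R_E(F)) \cong {\rm RHom}(R_E(F), E)$. Applying instead ${\rm RHom}(-, E)$ and using ${\rm RHom}(F, E) = 0$ together with ${\rm RHom}(E, E) = k$ yields ${\rm RHom}(R_E(F), E) = k$. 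The proof that $(L_E(F), E)$ is an exceptional pair is entirely dual, interchanging the roles of ${\rm ev}$ and ${\rm coev}$.

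For the inverse property, consider $L \circ R (E, F) = (L_F R_E(F), F)$; it suffices to produce an isomorphism $L_F R_E(F) \cong E$. First compute ${\rm RHom}(F, R_E(F))$ by applying ${\rm RHom}(F, -)$ to the triangle defining $R_E(F)$: using ${\rm RHom}(F, E) = 0$, the long exact sequence collapses to give a natural isomorphism ${\rm RHom}(F, R_E(F)) \cong {\rm RHom}(E, F)^*[-1]$. Substituting this into the defining triangle of $L_F R_E(F)$ and rotating, one sees that $L_F R_E(F)$ fits into a triangle with the same second and third vertices as the rotation $E \to {\rm RHom}(E, F)^* \otimes F \to R_E(F)[1] \to E[1]$ of the original triangle, so uniqueness of cones gives $L_F R_E(F) \cong E$. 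The case $R \circ L$ is symmetric.

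The principal obstacle lies in this last comparison: cones are only determined up to non-canonical isomorphism in a triangulated category, so one must verify that the identification ${\rm RHom}(F, R_E(F)) \cong {\rm RHom}(E, F)^*[-1]$ obtained from the original triangle is compatible with the tensor-Hom adjunction producing the ${\rm ev}$ map in the defining triangle of $L_F R_E(F)$. This can be handled either by tracing the universal pairings through the long exact sequence explicitly, or more conceptually by observing that within the thick subcategory $\langle E, F \rangle \subset \T$ an exceptional pair $(X, F)$ extending the given orthogonality relations rigidly determines $X$ up to isomorphism.
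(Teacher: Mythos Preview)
The paper does not supply its own proof of this proposition: it is stated with attribution to Gorodentsev and a reference to \cite{Rud}, and the text moves on immediately. So there is no in-paper argument to compare against.

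Your proof is the standard one and is correct. The computations showing ${\rm RHom}(R_E(F), F) = 0$ and ${\rm RHom}(R_E(F), R_E(F)) \cong {\rm RHom}(R_E(F), E) \cong {\rm RHom}(E, E) = k$ are carried out properly, and the dual argument for $L$ goes through symmetrically. You are also right to flag the one genuine subtlety: after identifying ${\rm RHom}(F, R_E(F)) \cong {\rm RHom}(E, F)^*[-1]$, one must know that the resulting evaluation map $\RHom(F, R_E(F)) \otimes F \to R_E(F)$ agrees (up to the identification) with the connecting morphism of the original triangle, so that the two cones really sit over the \emph{same} arrow. Both remedies you suggest are valid; the cleanest is the direct one: the isomorphism ${\rm RHom}(F, R_E(F)) \cong {\rm RHom}(E,F)^*[-1]$ arises precisely by composing with the connecting map $\RHom(E,F)^* \otimes F \to R_E(F)[1]$, and under tensor--Hom adjunction this composition is exactly what the evaluation map records. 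Alternatively, since $E$ is exceptional and already satisfies ${\rm RHom}(E, F) = 0$ inside the pair $(F, R_E(F))$'s left orthogonal, the rigidity observation you mention also closes the gap.
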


\subsection*{The equivalence}
Now fix $k$ algebraically closed and $f_\lambda = XY(X-Y)(X-\lambda Y)$ in $S = k[X,Y]$ with $\lambda \neq 0, 1$ in $k$. Let $R = S/f_\lambda$, with $\mathfrak{m} = R_{\geq 1}$. We have $a=d-2=2$. Writing $f_\lambda = l_1l_2l_3l_4$, we have natural matrix factorizations $(l_i, f_\lambda / l_i)$ which present the MCM module $L_i = R/l_i \cong k[z_i]$. Next, since $R$ is Cohen-Macaulay, $\mathfrak{m}$ contains a non-zero divisor and $\mathfrak{m}^n$ has depth $1$ for all $n \geq 1$, and so are MCM modules. Now, the modules $L_i$ have particular simple complete resolutions
\[
	\cdots \leftarrow R(3) \xleftarrow{f_\lambda/l_i} R \xleftarrow{l_i} R(-1) \xleftarrow{f_\lambda/l_i} R(-4) \leftarrow \cdots
\]
Since $\mathfrak{m} = \Omega^1_R(k) = k^{st}[-1]$, using Serre duality we note that the Tate cohomology groups between $\mathfrak{m}(r)$ and $L_i(s)$ are easily computed for any $r,s \in \Z$. We will need: 
\begin{lem}\label{strong} We have 
	\begin{align*}
		&{\rm dim}_k\ \underline{{\rm Ext}}^n_{gr R}(\mathfrak{m}(1), L_i) = \begin{cases}1 & n = 0\\ 0 & n \neq 0\end{cases} \\
		&{\rm dim}_k\ \underline{{\rm Ext}}^n_{gr R}(\mathfrak{m}(2), L_i) = \begin{cases} 1 & n = 1 \\ 0 & n \neq 1. \end{cases}
	\end{align*}
\end{lem}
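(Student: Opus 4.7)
The plan is to turn the computation of $\underline{\rm Ext}^n_{gr R}(\mathfrak{m}(r), L_i)$ into a trivial extraction of a graded Betti number $\beta_{i',j'}(L_i)$, which can then be read off directly from the explicit complete resolution of $L_i$ supplied above.

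First I would exploit the identification $\mathfrak{m} = \Omega^1_R k \simeq k^{st}[-1]$ in the stable category; this is immediate from the short exact sequence $0 \to \mathfrak{m} \to R \to k \to 0$ together with the facts that $R$ is zero in the stable category and that $k^{st}$ and $k$ agree there. Since the grade shift commutes with suspension, this rewrites
\[
\underline{\rm Ext}^n_{gr R}(\mathfrak{m}(r), L_i) \cong \underline{\rm Ext}^{n+1}_{gr R}(k^{st}(r), L_i).
\]

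Next I would apply Serre duality (Proposition \ref{Se}). Because $\dim R - 1 = 0$ the Serre functor reduces to the pure grade shift $\mathbb{S}_R = (a) = (2)$, so Serre duality yields
\[
\underline{\rm Ext}^{n+1}_{gr R}(k^{st}(r), L_i) \cong \underline{\rm Ext}^{-n-1}_{gr R}(L_i, k^{st}(r+2))^{*}.
\]
By the definition of complete graded Betti numbers recalled above, the dimension of the right-hand side is exactly $\beta_{-n-1,\, -r-2}(L_i)$, so the computation is reduced to a table lookup.

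Finally I would read off the required values from the explicit two-periodic complete resolution of $L_i$ displayed in the excerpt. Using $2$-periodicity $\beta_{i,j}(L_i) = \beta_{i+2, j+4}(L_i)$, we find $\beta_{-1, -3}(L_i) = \beta_{1,1}(L_i) = 1$ and $\beta_{-2,-4}(L_i) = \beta_{0,0}(L_i) = 1$, and that these are the only nonzero Betti numbers of $L_i$ in internal degrees $-3$ and $-4$, respectively. Substituting $r=1$ forces $-n-1 = -1$, i.e.\ $n=0$, while $r=2$ forces $-n-1 = -2$, i.e.\ $n=1$, and this yields both halves of the lemma. There is no real obstacle beyond carefully tracking the suspension and grade-shift conventions; the only conceptual ingredient is the Serre duality flip that trades a Tate Ext out of $\mathfrak{m}$, which is awkward to compute directly, for a transparent extraction from the known resolution of $L_i$.
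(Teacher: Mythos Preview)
Your argument is correct and follows exactly the approach the paper indicates just before the lemma: rewrite $\mathfrak{m}$ as $k^{st}[-1]$, apply Serre duality $\mathbb{S}_R=(2)$, and then read off the answer from the displayed complete resolution of $L_i$. The paper does not spell out these steps any further than you have, so your write-up is essentially a fleshed-out version of the same proof.
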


We will give a proof of the following theorem, which in this case follows standard lines. 
\begin{thm}[Buchweitz-Iyama-Yamaura]\label{tilting} The collection $\left(\mathfrak{m}(1), \mathfrak{m}^2(2), L_1, L_2, L_3, L_4 \right)$ forms a full strong exceptional collection in $\underline{{\rm MCM}}(gr R)$. 
\end{thm}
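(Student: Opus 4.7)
The plan decomposes into verifying the Ext conditions (exceptionality, semi-orthogonality, strong) and then fullness. The key computational inputs are Lemma~\ref{strong}, the rank-one matrix factorizations $(l_i, f_\lambda/l_i)$ giving the complete resolution of each $L_i$, Serre duality $\mathbb{S} = (2)$, the 2-periodicity $(4) = [2]$, and the short exact sequence $0 \to \mathfrak{m}^2 \to \mathfrak{m} \to k(-1)^{\oplus 2} \to 0$ coming from $\mathfrak{m}/\mathfrak{m}^2 \cong k(-1)^{\oplus 2}$.

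First I would compute $\underline{\rm Ext}^*_{gr R}(L_i, L_j)$ by applying $\underline{\rm Hom}_{gr R}(-, L_j)$ to the 2-periodic complete resolution of $L_i$: the resulting differentials alternate between multiplication by $l_i$ (which annihilates $L_j$ iff $j = i$) and by $f_\lambda/l_i$ (which vanishes on $L_j$ iff $j \neq i$, since $l_j \mid f_\lambda/l_i$), so a direct degree-by-degree analysis yields $\underline{\rm Ext}^n(L_i, L_j) = \delta_{ij}\delta_{n,0}\, k$, handling both exceptionality and mutual orthogonality of the $L_i$. Lemma~\ref{strong} then supplies $\underline{\rm Ext}^*(\mathfrak{m}(1), L_i)$ and $\underline{\rm Ext}^*(\mathfrak{m}(2), L_i)$. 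Twisting the SES for $\mathfrak{m}^2$ by $(2)$ and using $k^{st}(1) = \mathfrak{m}(1)[1]$ produces the distinguished triangle
\[
\mathfrak{m}^2(2) \to \mathfrak{m}(2) \to \mathfrak{m}(1)^{\oplus 2}[1] \to \mathfrak{m}^2(2)[1]
\]
in $\underline{\rm MCM}(gr R)$; applying $\underline{\rm Hom}(-, L_i)$ and $\underline{\rm Hom}(\mathfrak{m}(1), -)$ to this triangle, combined with Lemma~\ref{strong}, computes $\underline{\rm Ext}^*(\mathfrak{m}^2(2), L_i)$, $\underline{\rm Ext}^*(\mathfrak{m}(1), \mathfrak{m}^2(2))$, and the self-Ext's of $\mathfrak{m}(1), \mathfrak{m}^2(2)$, once the relevant connecting maps are shown to be nonzero. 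Reverse-direction vanishings such as $\underline{\rm Ext}^n(L_i, \mathfrak{m}(1)) = 0$ follow from Serre duality, $\underline{\rm Ext}^n(L_i, X) \cong \underline{\rm Ext}^{-n}(X, L_i(2))^*$, reduced to the same method with $L_i$ replaced by $L_i(2)$.

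For fullness, the strategy is to show $k^{st}(a) \in \langle T \rangle$ for every $a \in \mathbb{Z}$, since these MCM approximations generate $\underline{\rm MCM}(gr R)$. Writing $k^{st}(a) = \mathfrak{m}(a)[1]$ and using $(4) = [2]$, one reduces to $a \in \{0, 1, 2, 3\}$: cases $a = 1, 2$ follow from $\mathfrak{m}(1) = E_1$ and the triangle above. For the remaining cases, I would exploit the normalization SES $0 \to R \to \bigoplus_i L_i \to Q \to 0$ (where $Q$ is a finite-length cokernel supported at the origin, with composition factors among $k(-d)$ for $d \in \{0, 1, 2\}$); in the stable category this yields $\bigoplus L_i \cong Q^{st}$, so the $Q$-filtration produces triangles relating combinations of $k^{st}(-d) = \mathfrak{m}(-d)[1]$ to an object already in $\langle T \rangle$. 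Combined with $(4) = [2]$ (so that $\mathfrak{m}(-1) \cong \mathfrak{m}(3)[-2]$), this covers all the residue-field twists. I expect fullness to be the main obstacle: while the Ext computations are largely mechanical once Lemma~\ref{strong} and the $\mathfrak{m}^2(2)$-triangle are assembled --- though internal-degree bookkeeping is delicate --- fullness requires a careful analysis of the $Q$-filtration to extract each $\mathfrak{m}(-d)$ individually from $Q^{st}$, with the subtlety that the composition factors do not split off in the stable category.
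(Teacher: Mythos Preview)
Your approach is genuinely different from the paper's. The paper does \emph{not} verify the Ext-conditions or fullness directly. Instead it invokes Orlov's theorem (with cutoff $i=-a=-2$), which furnishes a semiorthogonal decomposition
\[
\Phi_{-2}\bigl(\underline{\rm MCM}(gr R)\bigr)=\bigl\langle k(2),\,k(1),\,{\rm R}\Gamma_{\ge 0}{\rm D}^b(Z)\bigr\rangle
=\bigl\langle k(2),\,k(1),\,L_1,\dots,L_4\bigr\rangle,
\]
so fullness and exceptionality come for free. This collection is not strong; a single right mutation $R_{k(2)}(k(1))=(R/\mathfrak m^2)(2)$ converts the first pair into $\bigl(k(1),\,(R/\mathfrak m^2)(2)\bigr)$, which under stabilization becomes $(\mathfrak m(1),\mathfrak m^2(2))$. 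Strongness is then checked using Lemma~\ref{strong} and the extension $\xi$. Your hands-on Ext computations via complete resolutions, Serre duality and the $\mathfrak m^2(2)$-triangle are a legitimate substitute for this part---more laborious, but they should go through.

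The substantive gap is in fullness, and you have correctly identified it without resolving it. Your triangle gives $k^{st}(1),k^{st}(2)\in\langle T\rangle$, hence $k^{st}(a)$ for $a\equiv 1,2\pmod 4$; you still need $a\equiv 0,3$. The normalization filtration of $Q^{st}\cong\bigoplus L_i$ has composition factors $k^{st}(0)^{3},\,k^{st}(-1)^{2},\,k^{st}(-2)$, and only the last of these is already known to lie in $\langle T\rangle$: peeling it off leaves an object whose remaining factors are \emph{exactly} the two unknown ones, so the filtration alone cannot isolate either. One can close the gap with extra input---for instance, the conductor gives $\mathfrak m^{3}\cong\bigoplus_i L_i(-3)$, forcing each $L_i(-3)$ and then $L_i(-2)$ into $\langle T\rangle$; combining this with the partial normalization $0\to R/l_il_j\to L_i\oplus L_j\to k\to 0$ and the stable identification $(R/l_1l_2)^{st}\cong(R/l_3l_4)^{st}(-2)[1]$ finally yields $k^{st}\in\langle T\rangle$. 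But none of this is in your outline, and it is precisely what Orlov's theorem lets the paper avoid.
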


\begin{proof} We use Orlov's theorem with cutoff $i = -a = -2$. Hence there is a fully faithful embedding
	\[
		\Phi_{-2}: \underline{{\rm MCM}}(gr R) \to {\rm D}^b(gr_{\geq -2} R)
	\]
	which is right-inverse to Buchweitz's stabilization functor ${\rm st}: {\rm D}^b(gr R) \to {\rm D}_{sg}(gr R) \cong \underline{{\rm MCM}}(gr R)$, and a semiorthogonal decomposition involving $Z = V(f_\lambda) \subset \mathbb{P}^1$ given by
	\[
		\Phi_{-2}\left(\underline{\rm MCM}(gr R) \right) = \left\langle k(2), k(1), {\rm R\Gamma}_{\geq 0}({\rm D}^b(Z)) \right\rangle.
	\]
	The category ${\rm D}^b(Z)$ is semisimple generated by the skyscraper sheaves of $4$ points, and we have ${\rm R\Gamma}_{\geq 0}({\rm D}^b(Z)) = \langle L_1, \dots, L_4 \rangle$. This gives a full exceptional sequence $\langle k(2), k(1), L_1, \dots, L_4\rangle$ which is however not strong, and so we mutate it. Now, the extension
	\[
		\xi: \mathfrak{m}/\mathfrak{m}^2 \to R/\mathfrak{m}^2 \to k \to \mathfrak{m}/\mathfrak{m}^2[1] 
	\] agrees with the universal extension
	\[
		{\rm Ext}^1(k, k(-1))^* \otimes_k k(-1) \to R_{k}(k(-1)) \to k \xrightarrow{coev} {\rm Ext}^1(k, k(-1))^{*} \otimes_k k(-1)[1].
	\]
One has ${\rm RHom}(k, k(-1)) = {\rm Ext}^1_{gr R}(k, k(-1))[-1]$, and so we have the right mutation $R_{k}(k(-1)) = R/\mathfrak{m}^2$, and similarly $R_{k(2)}\left(k(1)\right) = \left(R/\mathfrak{m}^2\right)(2)$. After mutating and desuspending the first two terms, we obtain the resulting exceptional collection
\[
	\left\langle k(1)[-1], \left(R/\mathfrak{m}^2\right)(2)[-1], L_1, \dots, L_4\right\rangle
\]
of $\Phi_{-2}\left(\underline{{\rm MCM}}(gr R) \right)$, which descends to the full exceptional collection
\[
	\left\langle \mathfrak{m}(1), \mathfrak{m}^2(2), L_1, \dots, L_4\right\rangle
\]
in $\underline{{\rm MCM}}(gr R)$. Direct calculations with lemma \ref{strong} and the extension $\xi$ shows that this collection is strong.
\end{proof}
Setting $U = \mathfrak{m}(1) \oplus \mathfrak{m}^2(2) \oplus \left( \bigoplus_{i=1}^4 L_i \right)$, we obtain that $U$ is a tilting object for $\underline{{\rm MCM}}(gr R)$. We can calculate the stable endomorphism ring.
\begin{prop}[Buchweitz-Iyama-Yamaura] We have algebra isomorphisms $\underline{{\rm End}}_{gr R}(U) = {\rm End}_{gr R}(U) \cong Sq(2,2,2,2; \lambda)$.
\end{prop}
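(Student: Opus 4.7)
The plan is to build an explicit algebra isomorphism $\phi: Sq(2,2,2,2;\lambda) \to {\rm End}_{gr R}(U)$ by matching the quiver arrows to natural $R$-linear morphisms between summands, verify the squid relations, count dimensions, and separately show that the natural surjection to $\underline{{\rm End}}_{gr R}(U)$ has trivial kernel.

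First I would exhibit the arrows. Send $X, Y \in Sq$ to the two degree-zero multiplication maps $\mathfrak{m}(1) \to \mathfrak{m}^2(2)$; send each $p_i$ to the degree-zero $R$-linear map $\mathfrak{m}^2(2) \to L_i$ characterized (up to scalar) by prescribing the images of the generators $X^2, XY, Y^2$ in $(L_i)_0 = k$ subject to the two Koszul syzygies $YX^2 - X(XY) = 0$ and $Y(XY) - XY^2 = 0$ imposed in $L_i = R/l_i$. A short case analysis across the four linear factors $l_i$ shows that these constraints always cut the three parameters down to one. Then verify $p_i l_i(X,Y) = 0$ in ${\rm End}_{gr R}(U)$: the composition factors through $l_i \mathfrak{m} \subseteq l_i R$, which is the kernel of $R \to R/l_i = L_i$ through which $p_i$ factors. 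This yields a well-defined algebra map $\phi$.

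Next I would compute all ${\rm Hom}_{gr R}(M,N)$ between summands by direct linear algebra on generators and relations. The expected outcome: ${\rm End}_{gr R}(M) = k$ for each summand ($6$ dimensions); ${\rm Hom}_{gr R}(\mathfrak{m}(1), \mathfrak{m}^2(2))$ is $2$-dimensional, spanned by $\cdot X$ and $\cdot Y$; ${\rm Hom}_{gr R}(\mathfrak{m}^2(2), L_i)$ and ${\rm Hom}_{gr R}(\mathfrak{m}(1), L_i)$ are each $1$-dimensional ($8$ in total); all remaining Hom groups vanish, e.g.\ ${\rm Hom}_{gr R}(L_i, L_j) = 0$ for $i \neq j$ because $1 \in L_i$ must map to an element of $(L_j)_0$ annihilated by $l_i$ in $L_j$, and similarly for Homs into $\mathfrak{m}(1), \mathfrak{m}^2(2)$. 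The total is $16$, matching $\dim_k Sq(2,2,2,2;\lambda)$ obtained as $6 + 2 + 4 + (8 - 4)$ after quotienting by the four squid relations. Since the generators of $Sq$ map to spanning elements of each graded piece, $\phi$ is surjective hence an isomorphism.

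Finally, for ${\rm End}_{gr R}(U) = \underline{{\rm End}}_{gr R}(U)$, I would show that no nonzero morphism between summands factors through a graded free module. All summands of $U$ are generated in degree zero and supported in non-negative degrees, so a degree-zero factorization $M \to R(-j) \to N$ forces $j = 0$; then it suffices to check ${\rm Hom}_{gr R}(M, R) = 0$ for each summand. For $\mathfrak{m}(1)$ the constraint $Y\alpha = X\beta$ in $R_1$ forces both images to vanish; the analogous Koszul argument kills ${\rm Hom}_{gr R}(\mathfrak{m}^2(2), R)$; and for $L_i$ the generator must map to an element of $R_0 = k$ annihilated by $l_i \neq 0 \in R_1$, hence to zero. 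The main obstacle is the systematic third step: although each individual Hom computation is straightforward linear algebra in low degrees of $R$ (the relation $f_\lambda$ of degree $4$ does not intrude on degrees $\leq 3$, so only Koszul syzygies are active), the bookkeeping across all $36$ pairs of summands is the bulk of the work.
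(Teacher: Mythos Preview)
Your proposal is correct and follows essentially the same route as the paper. The paper exhibits the same arrows (describing $q_i$ as ``induced by $X,Y \mapsto \overline{X}, \overline{Y} \in R/l_i$''), asserts without detail that they generate and satisfy the squid relations, and then appeals to a ``simple dimension count'' for ${\rm End}_{gr R}(U) = \underline{{\rm End}}_{gr R}(U)$; you are supplying the details behind both of those phrases. The only substantive difference is in the last step: the paper's dimension count presumably compares $\dim_k {\rm End}_{gr R}(U) = 16$ against the stable Hom dimensions already available from the strong exceptional collection (Lemma~\ref{strong} and Theorem~\ref{tilting}), whereas you argue directly that no morphism between summands factors through a graded free module. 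Your argument is more self-contained and avoids relying on Theorem~\ref{tilting}; the paper's is shorter given what has already been established.
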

\begin{proof} Consider the following morphisms
	\[
		\xymatrix@C2pc@R0.3pc{	&& L_1 \\ 
				&& L_2 \\
				\mathfrak{m}(1) \ar@<.5ex>[r]^{X} \ar@<-.5ex>[r]_{Y} & \mathfrak{m}^2(2)  \ar[ruu]^{q_1} \ar[ru]_{q_2} \ar[rd]^{q_3} \ar[rdd]_{q_4} \\
				&& L_3 \\
				&& L_4 \\
}
	\]
	with $q_i: \mathfrak{m}^2(2) \to L_i$ induced by $X,Y \mapsto \overline{X}, \overline{Y} \in R/l_i = L_i$. These are easily seen to generate the endomorphism algebra and satisfy the squid relations, thus showing ${\rm End}_{gr R}(U) \cong Sq(2,2,2,2; \lambda)$. Lastly, a simple dimension count shows that ${\rm End}_{gr R}(U) = \underline{\rm End}_{gr R}(U)$.
\end{proof}
Thinking ahead, we will normalize our tilting object by using $T = U(-3)[1]$ instead of $U$, which has the same endomorphism algebra. Using $T$, and making use of \cite[example 4.4]{LM2}, we obtain:

\begin{cor}\label{wpl} For $\X = \mathbb{P}^1(2,2,2,2; \lambda)$, we have equivalences of triangulated categories
	\[
		\underline{\rm MCM}(gr R) \cong {\rm D}^b(\Lambda) \cong {\rm D}^b(\X).
	\]
The composed equivalence sends the full strong exceptional collection 
	\[
		\left(k^{st}(-2), (R/\mathfrak{m}^2)^{st}(-1), L_1(-3)[1], \dots, L_4(-3)[1]\right)
	\]
	to 
	\[
		\left( \mathcal{O}, \mathcal{O}(\vec{c}), S_{1,0}, \dots, S_{4, 0} \right).
	\]
\end{cor}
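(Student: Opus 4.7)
The plan is to combine two tilting equivalences and then track the images of the distinguished summands. First, I would invoke the tilting formalism on the MCM side: Theorem~\ref{tilting} establishes that $U$ — and hence $T := U(-3)[1]$, which has the same endomorphism algebra $\Lambda$ — is a full strong exceptional collection in $\underline{\rm MCM}(gr R)$, equivalently a tilting object. Since $\underline{\rm MCM}(gr R)$ is Krull--Schmidt (hence idempotent complete) and algebraic, the standard tilting theorem yields an equivalence ${\rm RHom}(T,-) : \underline{\rm MCM}(gr R) \xrightarrow{\sim} {\rm D}^b(\Lambda)$, sending each indecomposable summand of $T$ to the indecomposable projective $\Lambda$-module at the corresponding Squid vertex.

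Second, on the sheaf side I would invoke the Geigle--Lenzing--Meltzer result cited as \cite[example 4.4]{LM2}: the coherent sheaf $\widetilde{T} := \mathcal{O} \oplus \mathcal{O}(\vec{c}) \oplus \bigoplus_{i=1}^{4} S_{i,0}$ on $\X$ is tilting with ${\rm End}_\X(\widetilde T) \cong \Lambda = Sq(2,2,2,2;\lambda)$, the isomorphism identifying $\mathcal{O}$, $\mathcal{O}(\vec{c})$, and $S_{i,0}$ with the projectives at vertices $5$, $6$, and $i$ respectively. This gives an equivalence ${\rm RHom}_\X(\widetilde T, -) : {\rm D}^b(\X) \xrightarrow{\sim} {\rm D}^b(\Lambda)$ which, inverted and composed with the first, produces the desired $\underline{\rm MCM}(gr R) \cong {\rm D}^b(\X)$.

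Third, to identify the image of the stated exceptional collection it suffices, by the vertex-to-projective correspondence, to compare the summands of $T$ with those of $\widetilde T$. For the first summand, $\mathfrak{m}(1)(-3)[1] = \mathfrak{m}(-2)[1]$; from $0 \to \mathfrak{m} \to R \to k \to 0$ one has $\mathfrak{m} \cong k^{st}[-1]$ in the stable category, so $\mathfrak{m}(-2)[1] \cong k^{st}(-2)$. Similarly $\mathfrak{m}^2(2)(-3)[1] = \mathfrak{m}^2(-1)[1]$, and $0 \to \mathfrak{m}^2 \to R \to R/\mathfrak{m}^2 \to 0$ yields $\mathfrak{m}^2 \cong (R/\mathfrak{m}^2)^{st}[-1]$, so this summand equals $(R/\mathfrak{m}^2)^{st}(-1)$. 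The remaining summands $L_i(-3)[1]$ are already in the stated form, and matching them against $\mathcal{O}, \mathcal{O}(\vec{c}), S_{1,0}, \dots, S_{4,0}$ vertex-by-vertex completes the identification.

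The main obstacle is the second equivalence: one must extract from \cite[example 4.4]{LM2} not only that $\widetilde T$ is tilting, but also the explicit identification of ${\rm End}(\widetilde T)$ with the Squid algebra under a vertex labeling compatible with the one used on the MCM side. This consistency is essentially built into the Geigle--Lenzing construction, since the arrows $\mathcal{O} \rightrightarrows \mathcal{O}(\vec{c})$ span the $2$-dimensional ${\rm Hom}(\mathcal{O}, \mathcal{O}(\vec{c}))$ on a genus-one $\X$, while the maps $\mathcal{O}(\vec{c}) \to S_{i,0}$ together with the squid relations $p_i l_i = 0$ encode the serial structure of the torsion category at the exceptional points.
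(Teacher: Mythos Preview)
Your proposal is correct and follows exactly the approach the paper takes: the corollary is deduced by combining the tilting equivalence from $T = U(-3)[1]$ with the Geigle--Lenzing--Meltzer tilting object on $\X$ from \cite[example 4.4]{LM2}, and you have simply made explicit the identifications $\mathfrak{m}(-2)[1] \cong k^{st}(-2)$ and $\mathfrak{m}^2(-1)[1] \cong (R/\mathfrak{m}^2)^{st}(-1)$ that the paper leaves to the reader. The vertex-matching concern you raise is real but minor, and the paper likewise absorbs it into the citation.
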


As alluded to in the introduction, we also have:
\begin{cor} Let $\widehat{R}$ be the completion of $R$ at $\mathfrak{m}$. Every MCM $\widehat{R}$-module $M$ is the completion of a graded MCM $R$-module. 
\end{cor}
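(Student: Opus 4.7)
My plan is to use the equivalence $\underline{\rm MCM}(gr R) \cong {\rm D}^b(\X)$ of Corollary \ref{wpl} together with a Krull-Schmidt argument to show that the completion functor $(-)^{\wedge} \colon \underline{\rm MCM}(gr R) \to \underline{\rm MCM}(\widehat{R})$, $M \mapsto \widehat{M}$, is essentially surjective on indecomposables. Since $\widehat{R}$ is a $1$-dimensional Henselian reduced Cohen-Macaulay ring with isolated singularity, $\underline{\rm MCM}(\widehat{R})$ is Hom-finite Krull-Schmidt, as is the graded side by the corollary. The completion functor identifies grade shifts, so it factors through the orbit category $\underline{\rm MCM}(gr R) / \langle (1) \rangle$. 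Under the equivalence with ${\rm D}^b(\X)$, the grade shift $(1)$ corresponds to an autoequivalence $\tau$; from the relation $(d) = [2]$ with $d = 4$ together with the Serre functor calculation $\mathbb{S}_R = (2)$ of Proposition \ref{Se} (noting $\dim R = 1$, $a = 2$), which corresponds to $\otimes \omega_\X[1]$ on ${\rm D}^b(\X)$, we obtain $\tau^2 = \otimes \omega_\X[1]$, which pins down $\tau$ as an explicit square root of the Serre twist (consistent with $\omega_\X^{\otimes 2} = \mathcal{O}_\X$ coming from $2\vec{x}_i = \vec{c}$, and with $\tau^4 = [2]$).

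To prove essential surjectivity, by Krull-Schmidt it suffices to check indecomposables. The indecomposables of ${\rm D}^b(\X)$ for the tubular weighted projective line $\X = \mathbb{P}^1(2,2,2,2;\lambda)$ are completely described in \cite{LM, Melt}: up to $\mathbb{L}$-shift and suspension they comprise vector bundles classified by slope (with a smooth elliptic curve parametrizing each slope class) together with torsion sheaves in the tubes over points of $\mathbb{P}^1(k)$. I would then match the $\tau$-orbits of these indecomposables bijectively with the tame Drozd-Tovpyha classification of indecomposable MCM $\widehat{R}$-modules via bunches of chains \cite{DT}, thereby exhibiting a graded lift of every indecomposable over $\widehat{R}$.

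The main obstacle is the explicit matching of the two tame classifications, which use rather different combinatorial languages. An alternative, conceptually cleaner approach would be to realize $\underline{\rm MCM}(\widehat{R})$ (or its Karoubi completion) as the triangulated orbit category $\underline{\rm MCM}(gr R) / \langle (1) \rangle$ via a Keller-style theorem, thereby deducing gradability formally without invoking the Drozd-Tovpyha description.
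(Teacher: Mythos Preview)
Your primary proposal is not a proof: you reduce everything to ``matching the $\tau$-orbits with the Drozd--Tovpyha classification'' and then acknowledge that this matching is the main obstacle. Nothing in your write-up actually carries it out, and doing so would amount to an independent classification of indecomposable MCM $\widehat{R}$-modules, which is precisely what the corollary is meant to sidestep.

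Your alternative, however, is exactly the route the paper takes, and it works cleanly once the right references are invoked. The paper uses \cite[proposition~1.5]{KMVdB} to identify the completion functor $\widehat{(-)}\colon \underline{\rm MCM}(gr R)\to \underline{\rm MCM}(\widehat{R})$ with the universal morphism to the triangulated hull of the orbit category $\underline{\rm MCM}(gr R)/(1)$. Then one only needs Keller's criterion \cite{Ke}: since $(4)=[2]$ on $\underline{\rm MCM}(gr R)\cong {\rm D}^b(\X)$, the autoequivalence $(1)$ moves objects away from the hereditary heart ${\rm coh}(\X)$, so the orbit category is already its own triangulated hull and the canonical functor to it is essentially surjective. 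No explicit comparison with bunches of chains is needed. Your computation of $\tau^2$ and $\tau^4$ is correct and is exactly the input Keller's result requires; the missing step in your outline was naming the KMVdB identification of completion with the orbit-category functor, which is what makes the ``Keller-style'' argument go through.
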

\begin{proof} By \cite[proposition 1.5]{KMVdB}, the completion functor $\widehat{(-)}: \underline{{\rm MCM}}(gr R) \to \underline{{\rm MCM}}(\widehat{R})$ identifies with the universal morphism to the triangulated hull of the orbit category $\underline{{\rm MCM}}(gr R)/(1)$. Since $(4) = [2]$, the functor $(1)$ moves away from the hereditary heart ${\rm coh}(\X) \subset \underline{{\rm MCM}}(gr R)$, and so the completion functor is essentially surjective by results of Keller \cite{Ke}. 
\end{proof}
\section{Betti tables and cohomology tables}
We can calculate graded Betti numbers as $\beta_{i,j}(M) = {\rm dim}_k\ \underline{\rm Ext}_{gr R}^i(M, k^{st}(-j))$, hence as dimension of corresponding ${\rm Ext}$-spaces in ${\rm D}^b(\mathcal{X)}$. Due to the periodicity $k^{st}(-j-4) = k^{st}(-j)[-2]$, it suffices to compute the images of \[
	k^{st}, k^{st}(-1), k^{st}(-2), k^{st}(-3).
\]
The Serre functor $\mathbb{S}_R(M) = M(2)$ is sent to the Serre functor $\mathbb{S}_\X(\F) = \F \otimes \omega_{\X}[1]$. The periodicity identity $(4) = [2]$ corresponds to the fact that $\omega_{\X}$ is $2$-torsion. Keeping this in mind, we will prove the following:

\begin{thm}\label{residuefields} Under the above equivalence, we have
	\begin{align*}
		& k^{st} \ \ \ \ \ \ \mapsto \omega_{\X}[1] \\
		& k^{st}(-1) \mapsto  \mathcal{O}(-\vec{c})[1] \\
		& k^{st}(-2) \mapsto  \mathcal{O}\\
		& k^{st}(-3) \mapsto  \mathcal{O}(-\vec{c}) \otimes \omega_{\X}.
	\end{align*}
\end{thm}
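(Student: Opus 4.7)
The plan is to reduce everything to Corollary \ref{wpl} (which gives $\Phi(k^{st}(-2)) = \mathcal{O}$) using the Serre functor together with one extension triangle. Since $\dim R = 1$, Proposition \ref{Se} yields $\mathbb{S}_R = (2)$ with no cohomological shift, and the equivalence of Corollary \ref{wpl} transports this to $\mathbb{S}_\X(-) = - \otimes \omega_\X[1]$. Consequently
\[
k^{st} = \mathbb{S}_R(k^{st}(-2)) \;\mapsto\; \mathbb{S}_\X(\mathcal{O}) = \omega_\X[1],
\]
and once $\Phi(k^{st}(-1)) = \mathcal{O}(-\vec{c})[1]$ has been established, the $2$-torsion of $\omega_\X$ combined with $k^{st}(-1) = \mathbb{S}_R(k^{st}(-3))$ forces
\[
\Phi(k^{st}(-3)) = \mathbb{S}_\X^{-1}(\mathcal{O}(-\vec{c})[1]) = \mathcal{O}(-\vec{c}) \otimes \omega_\X.
\]
So only the image of $k^{st}(-1)$ needs genuine work.

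For this, I would apply Buchweitz's stabilization functor to the short exact sequence of graded $R$-modules
\[
0 \to \mathfrak{m}^2(-1) \to \mathfrak{m}(-1) \to (\mathfrak{m}/\mathfrak{m}^2)(-1) \to 0,
\]
using that $\mathfrak{m}/\mathfrak{m}^2 \cong k(-1)^{\oplus 2}$ via the residues of $X, Y$, and the identifications $k^{st}(-j) = \mathfrak{m}(-j)[1]$, $(R/\mathfrak{m}^2)^{st}(-1) = \mathfrak{m}^2(-1)[1]$. After shifting and rotating this gives a triangle
\[
k^{st}(-2)^{\oplus 2} \xrightarrow{(X,\,Y)} (R/\mathfrak{m}^2)^{st}(-1) \to k^{st}(-1) \to k^{st}(-2)^{\oplus 2}[1]
\]
in $\underline{{\rm MCM}}(gr R)$. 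Applying $\Phi$ and using Corollary \ref{wpl}, the first two terms become $\mathcal{O}^{\oplus 2}$ and $\mathcal{O}(\vec{c})$. The squid-algebra identification of $\underline{{\rm End}}_{gr R}(U)$ pairs the two multiplication generators $X, Y \colon \mathfrak{m}(1) \to \mathfrak{m}^2(2)$ with the two global sections $X, Y \in H^0(\mathcal{O}(\vec{c}))$ on $\X$, so the resulting morphism is the canonical evaluation $(X, Y) \colon \mathcal{O}^{\oplus 2} \twoheadrightarrow \mathcal{O}(\vec{c})$. Its kernel is the Koszul-like ``Euler'' sequence
\[
0 \to \mathcal{O}(-\vec{c}) \xrightarrow{\binom{-Y}{\;X}} \mathcal{O}^{\oplus 2} \xrightarrow{(X,\,Y)} \mathcal{O}(\vec{c}) \to 0
\]
on $\X$ (surjective since $X, Y$ generate $\mathcal{O}(\vec{c})$; the kernel has rank one and degree $-p = -2$, and the indicated Koszul relation exhibits it as $\mathcal{O}(-\vec{c})$). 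Hence the cone is $\mathcal{O}(-\vec{c})[1]$, giving $\Phi(k^{st}(-1)) = \mathcal{O}(-\vec{c})[1]$.

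The main obstacle is the compatibility check embedded in the middle step: one must verify that the connecting map in the stabilized triangle is transported under $\Phi$ to the canonical evaluation $\mathcal{O}^{\oplus 2} \to \mathcal{O}(\vec{c})$, rather than to some twisted version. Once this is pinned down via the explicit squid presentation (or equivalently via \cite[Example 4.4]{LM2}), the Euler sequence on $\X$ finishes the case $j = 1$, and the remaining identifications for $j = 0, 3$ are formal consequences of the Serre functor identity $\mathbb{S}_R = (2)$ and the fact that $\omega_\X$ is $2$-torsion.
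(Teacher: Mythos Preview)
Your argument is correct and is essentially the same as the paper's, but packaged differently. Both proofs reduce the only nontrivial case $k^{st}(-1)$ to computing the cone of the evaluation map $\mathcal{O}^{\oplus 2}\to\mathcal{O}(\vec{c})$ via the Euler sequence on $\X$. You build this triangle by stabilizing the short exact sequence $0\to\mathfrak{m}^2(-1)\to\mathfrak{m}(-1)\to(\mathfrak{m}/\mathfrak{m}^2)(-1)\to 0$ and then transporting it under $\Phi$; the paper instead observes that the exceptional pair $(k^{st}(-2),(R/\mathfrak{m}^2)^{st}(-1))$ arises as the right mutation of $(k^{st}(-1),k^{st}(-2))$, so by Proposition~\ref{braid} its image $(\mathcal{O},\mathcal{O}(\vec{c}))$ is the right mutation of $(\F_{k^{st}(-1)},\mathcal{O})$, whence $\F_{k^{st}(-1)}=L_{\mathcal{O}}(\mathcal{O}(\vec{c}))$, and the defining triangle of this left mutation \emph{is} the Euler sequence.

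The practical difference is exactly the ``main obstacle'' you flag: in your version you must check that the transported map $\mathcal{O}^{\oplus 2}\to\mathcal{O}(\vec{c})$ really is the evaluation (equivalently, that its two components are linearly independent sections). The paper's mutation formalism dispenses with this, since mutations are defined categorically via the canonical evaluation/coevaluation maps and are therefore preserved by any equivalence. Your check is not hard either --- the squid identification matches $X,Y$ with a basis of $H^0(\mathcal{O}(\vec{c}))$, or alternatively indecomposability of $k^{st}(-1)$ forces the two components to be independent --- but the mutation language makes it automatic.
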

\begin{proof} By corollary \ref{wpl} we already have $k^{st}(-2) \mapsto \mathcal{O}$, and so $k^{st} = \mathbb{S}_R\left(k^{st}(-2)\right) \mapsto \mathbb{S}_{\X}\left(\mathcal{O}\right) = \omega_{\X}[1]$. Let $\F_{k^{st}(-1)}$ correspond to $k^{st}(-1)$. We can obtain the exceptional pair $(k^{st}(-2), \left(R/\mathfrak{m}^2\right)^{st}(-1))$ as the right mutation of
	\[
		R: (k^{st}(-1), k^{st}(-2)) \mapsto \left(k^{st}(-2), R_{k^{st}(-1)}(k^{st}(-2)) \right)
	\]
	and so we can obtain $(\mathcal{O}, \mathcal{O}(\vec{c}))$ as the right mutation of 
	\[
		R: (\F_{k^{st}(-1)}, \mathcal{O}) \mapsto \left(\mathcal{O}, R_{\F_{k^{st}(-1)}}(\mathcal{O}) \right)
	\]
	By \ref{braid} we can recover $\F_{k^{st}(-1)}$ as the left mutation $\F_{k^{st}(-1)} \cong L_{\mathcal{O}} \left( R_{\F_{k^{st}(-1)}}(\mathcal{O})\right) \cong L_{\mathcal{O}}(\mathcal{O}(\vec{c}))$, calculated by the distinguished triangle
	\[
		L_{\mathcal{O}}(\mathcal{O}(\vec{c}))[-1] \to {\rm RHom}_\X(\mathcal{O}, \mathcal{O}(\vec{c})) \otimes_k \mathcal{O} \xrightarrow{ev} \mathcal{O}(\vec{c}) \to L_{\mathcal{O}}(\mathcal{O}(\vec{c})).
	\]

	Finally we have a fully faithful embedding ${\rm D}^b(\mathbb{P}^1) \hookrightarrow {\rm D}^b(\X)$ sending $\mathcal{O}(n)$ to $\mathcal{O}(n\vec{c})$. Calculating in the former one sees that left mutation gives $\F_{k^{st}(-1)} = L_{\mathcal{O}}(\mathcal{O}(\vec{c})) = \mathcal{O}(-\vec{c})[1]$, and so $k^{st}(-1) \mapsto \mathcal{O}(-\vec{c})[1]$ and $k^{st}(-3) \mapsto \mathcal{O}(-\vec{c}) \otimes \omega_{\X}$.
\end{proof}

Given an MCM module $M$, write $\F_M$ for the corresponding complex of coherent sheaves on $\X$.
\begin{cor} We can calculate Betti numbers $\beta_{i,j} = \beta_{i,j}(M)$ as follows:
	\begin{align*}
	& \beta_{i,0} = {\rm dim}_k\ {\rm Ext}^i(\mathcal{F}_{M}, \omega_{\X}[1]) = h^{-i}(\mathcal{F}_{M}) \\
	& \beta_{i,1} = {\rm dim}_k\ {\rm Ext}^i(\mathcal{F}_{M}, \mathcal{O}(-\vec{c})[1]) = h^{-i}(\mathcal{F}_M(\vec{c}) \otimes \omega_{\X})\\
	& \beta_{i,2} = {\rm dim}_k\ {\rm Ext}^i(\mathcal{F}_{M}, \mathcal{O}) = h^{1-i}(\mathcal{F}_{M} \otimes \omega_{\X})\\
	& \beta_{i,3} = {\rm dim}_k\ {\rm Ext}^i(\mathcal{F}_{M}, \mathcal{O}(-\vec{c}) \otimes \omega_{\X}) = h^{1-i}(\mathcal{F}_M(\vec{c})).
	\end{align*}
When $\mathcal{F}_M$ is a coherent sheaf, collecting terms via the periodicity $\beta_{i, j} = \beta_{i+2, j+4}$, the only possible non-trivial Betti numbers for $M$ of the form $\beta_{0, *}, \beta_{1, *}$ are 
\[
	\beta_{0, 0}, \beta_{0, 1}, \beta_{0, 2}, \beta_{0,3}, \beta_{1,2}, \beta_{1,3}, \beta_{1,4}, \beta_{1,5}.
\]
\end{cor}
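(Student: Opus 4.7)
The proof is essentially a bookkeeping computation: translate each graded Betti number through the equivalence of Corollary \ref{wpl}, apply Theorem \ref{residuefields} to identify the target objects on $\X$, and then use Serre duality to convert each $\Ext^i$ into a hypercohomology group $h^k$ of an appropriate twist of $\F_M$.

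Concretely, the plan is as follows. Start from $\beta_{i,j}(M) = \dim_k \underline{\Ext}^i_{gr R}(M, k^{st}(-j))$. Under the composed equivalence $\underline{\rm MCM}(gr R) \cong {\rm D}^b(\X)$, this becomes $\dim_k \Ext^i_\X(\F_M, \G_j)$, where by Theorem \ref{residuefields} the four relevant $\G_j$ are $\omega_\X[1]$, $\mathcal{O}(-\vec{c})[1]$, $\mathcal{O}$, and $\mathcal{O}(-\vec{c})\otimes \omega_\X$ for $j=0,1,2,3$ respectively. Now apply the Serre duality isomorphism on $\X$, namely $\Ext^i(\F, \G)^* \cong \Ext^{1-i}(\G, \F\otimes \omega_\X)$, coming from the Serre functor $\mathbb{S}_\X = (-)\otimes \omega_\X[1]$ on a weighted projective line of genus one. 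For $j=0,1$ this yields $\Ext^{-i}(\mathcal{O},\F_M)$ and $\Ext^{-i}(\mathcal{O}(-\vec{c}), \F_M\otimes \omega_\X)$, i.e.\ $h^{-i}(\F_M)$ and $h^{-i}(\F_M(\vec{c})\otimes \omega_\X)$; for $j=2,3$ the analogous computation gives $h^{1-i}(\F_M \otimes \omega_\X)$ and $h^{1-i}(\F_M(\vec{c}))$. Throughout one uses that $\omega_\X$ is invertible (and $2$-torsion) to move it across, and that $\Hom_\X(\mathcal{O}(\vec{x}), -) = H^0(-(-\vec{x}))$.

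For the second assertion, restrict to $M$ with $\F_M$ a bona fide coherent sheaf, so that $h^k(\F_M \otimes \mathcal{L}) = 0$ outside $k = 0, 1$ for any line bundle twist $\mathcal{L}$ (since $\X$ is a curve in the sense that it has cohomological dimension $1$). Plugging this vanishing into the four formulas from the first part leaves, for each residue class $j \bmod 4$, at most two nonzero homological degrees. Next, use the periodicity $\beta_{i,j} = \beta_{i+2, j+4}$ (from $k^{st}(-j-4) = k^{st}(-j)[-2]$, itself coming from $(4)=[2]$) to collect all nontrivial values into the rows $i = 0, 1$. For $j\equiv 0,1 \pmod 4$ the two surviving cohomology degrees are $-i \in \{0,1\}$, contributing $\beta_{0,0}, \beta_{0,1}$ and $\beta_{-1, 0}=\beta_{1,4}$, $\beta_{-1,1}=\beta_{1,5}$; for $j\equiv 2,3 \pmod 4$ the surviving degrees are $1-i \in \{0,1\}$, contributing $\beta_{0,2}, \beta_{0,3}, \beta_{1,2}, \beta_{1,3}$. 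Taking the union gives exactly the eight entries listed.

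No step is truly hard; the only mild obstacle is consistently tracking the shift $\omega_\X[1]$ appearing in $\mathbb{S}_\X$ together with the shifts already present in $\G_j$, so that negative cohomological degrees (contributing the $\beta_{1, 4}$ and $\beta_{1, 5}$ entries) are correctly reidentified via periodicity rather than being discarded. I would simply verify each of the four identities individually and then assemble the second statement.
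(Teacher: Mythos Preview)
Your proposal is correct and is exactly the computation the paper leaves implicit: the corollary is stated without proof, as an immediate consequence of Theorem~\ref{residuefields} combined with Serre duality on $\X$, and your write-up carries this out carefully and accurately (including the correct handling of the $\beta_{-1,0}=\beta_{1,4}$ and $\beta_{-1,1}=\beta_{1,5}$ identifications via periodicity).
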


Since ${\rm coh}(\X)$ is hereditary, indecomposables in ${\rm D}^b(\X)$ are of the form $\mathcal{F}[-n]$ for $\mathcal{F}$ an indecomposable coherent sheaf and $n \in \mathbb{Z}$, and it suffices to work out Betti tables corresponding to coherent sheaves. In this case, the data is best expressed in the following table:
\begin{align*}
	\beta(M) = \begin{pmatrix*}[c]\beta_{0,0} & \beta_{1,2} \\ \beta_{0,1} & \beta_{1,3} \\ \beta_{0,2} & \beta_{1,4} \\ \beta_{0,3} & \beta_{1,5} \\ \end{pmatrix*}
	= \begin{pmatrix*}[l] h^0(\mathcal{F}) & h^0(\mathcal{F} \otimes \omega_{\X}) \\ h^0(\mathcal{F}(\vec{c}) \otimes \omega_{\X}) & h^0(\mathcal{F}(\vec{c})) \\ h^1(\mathcal{F}\otimes \omega_{\X}) & h^1(\mathcal{F}) \\ h^1(\mathcal{F}(\vec{c})) & h^1(\mathcal{F}(\vec{c}) \otimes \omega_{\X}) \\   \end{pmatrix*}
\end{align*}
where $\mathcal{F} = \mathcal{F}_M$. We will refer to the latter table as the cohomology table $\beta(\mathcal{F})$.

\subsection*{Weighted projective lines of genus one}
Let us recall the classification scheme for indecomposable coherent sheaves over a weighted projective line of genus one due to Lenzing and Meltzer \cite{LM}, which closely mirrors Atiyah's classification of sheaves on an elliptic curve. Let $rk, deg: K_0(\X) \to \mathbb{Z}$ be the rank and degree functionals, and let $\mu(\mathcal{F}) = \frac{deg(\mathcal{F})}{rk(\mathcal{F})}$ be the slope of $\mathcal{F}$. For $q \in \mathbb{Q} \cup \{ \infty \}$, let $\mathcal{C}_q$ be the category of semistable sheaves of slope $q$ (see \cite[2.5]{LM}). One can show that indecomposables are semistable, and thus part of $\mathcal{C}_q$ for some $q$. As $deg(\omega_{\X}) = 0$ for $\X$ of genus one, we see that each $\mathcal{C}_q$ is closed under $- \otimes \omega_{\X}$. In \cite{LM}, \cite[chapter 5]{Melt}, Lenzing and Meltzer construct two autoequivalences
\[
	R, S: {\rm D}^b(\X) \to {\rm D}^b(\X)
\]
given by tubular mutations (or spherical twists), which fit into canonical distinguished triangles
\[
	R\F \to \F \xrightarrow{coev} \bigoplus_{j \in \Z_2}{\rm RHom}(\omega_{\X}^{\otimes j}, \F)^* \otimes_k \omega_{\X}^{\otimes j} \to R\F[1]
\]
and (for some fixed choice of $i$)
\[
	\bigoplus_{j \in \Z_2} {\rm RHom}(S_{i, j}, \F) \otimes_k S_{i, j} \xrightarrow{ev} \F \to S\F \to \bigoplus_{j \in \Z_2} {\rm RHom}(S_{i,j}, \F)[1]
\]

These act on rank and degree by
\[
	\begin{pmatrix} rk(R\F) \\ deg(R\F) \end{pmatrix} = \begin{pmatrix} 1 & 1 \\ 0 & 1 \end{pmatrix} \begin{pmatrix} rk(\F) \\ deg(\F) \end{pmatrix}
\]
\[
	\begin{pmatrix} rk(S\F) \\ deg(S\F) \end{pmatrix} = \begin{pmatrix} 1 & 0 \\ 1 & 1 \end{pmatrix} \begin{pmatrix} rk(\F) \\ deg(\F) \end{pmatrix} 
\]

and restrict to equivalences
\[
	R: \mathcal{C}_q \xrightarrow{\cong} \mathcal{C}_{\frac{q}{q+1}}
\]
\[
	S: \mathcal{C}_q \xrightarrow{\cong} \mathcal{C}_{q+1}
\]
for $q \neq \infty$, and
\[
	R: \mathcal{C}_{\infty} \xrightarrow{\cong} \mathcal{C}_1.
\]

By a known result, the transformations $R: q \mapsto \frac{q}{q+1}$ and $S: q \mapsto q+1$ defines an action of the free semigroup on two words $F\{R,S\}$ on the positive rationals $\mathbb{Q}_{+}$, which is free and transitive with single generator $1$. Writing $q \in \mathbb{Q}_{+}$ as $w_q(R, S)\cdot 1$ for some unique word $w_q(R, S)$, one deduces the existence of an autoequivalence
\[
	w_q(R, S) \circ R: {\rm D}^b(\X) \xrightarrow{\cong} {\rm D}^b(\X)
\]
restricting to $\mathcal{C}_{\infty} \xrightarrow{\cong} \mathcal{C}_q$ for any $q > 0$, then extended to any $q \leq 0$ by composing with sufficient powers of $S^{-1}$. Since the category $\mathcal{C}_{\infty}$ consists of all skyscraper sheaves, the category $\mathcal{C}_q$ is serial for any $q$, with simples given by the stable sheaves. Since the type of $\X$ is $(2,2,2,2)$, the Auslander-Reiten quiver of $\mathcal{C}_q$ breaks down into components which are rank one tubes indexed by the ordinary points of $\X$, as well as $4$ tubes of rank two indexed by the exceptional points. These correspond to indecomposables for which $\F \otimes \omega_{\X} \cong \F$ and $\F \otimes \omega_{\X} \ncong \F$, respectively.

For computing morphism spaces, we have the following well-known results.
\begin{lem}[lemma 4.1, \cite{LM}]\label{slopevanishing} Let $\F, \G$ be semistable sheaves of slopes $q, q'$.
	\begin{enumerate}
		\item If $q > q'$, then ${\rm Hom}(\F, \G) = 0$.
		\item If $q < q'$, then ${\rm Ext}^1(\F, \G) = 0$.
	\end{enumerate}
\end{lem}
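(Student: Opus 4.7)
The plan is a standard slope argument. For part (1), I would examine the image of a hypothetical non-zero morphism; for part (2), I would reduce to part (1) via Serre duality, where the genus-one hypothesis enters crucially through the fact that $\deg(\omega_\X) = 0$.

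For (1), I would suppose $f: \F \to \G$ is non-zero and set $\mathcal{I} = \mathrm{im}(f)$. Then $\mathcal{I}$ is simultaneously a non-zero quotient of $\F$ and a non-zero subsheaf of $\G$. Semistability of $\F$ in its quotient formulation (every non-zero quotient has slope $\geq \mu(\F)$) gives $\mu(\mathcal{I}) \geq q$, while semistability of $\G$ in its subobject formulation gives $\mu(\mathcal{I}) \leq q'$. Together these force $q \leq q'$, contradicting the hypothesis. The boundary cases involving torsion are easy: if $\F$ is torsion (slope $\infty$) and $\G$ is torsion-free of finite slope, the image is a torsion subsheaf of a torsion-free sheaf, hence zero, so $f=0$; the remaining configurations either reduce in the same way or make the hypothesis $q > q'$ vacuous.

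For (2), I would invoke the Serre functor $\mathbb{S}_\X(-) = (-) \otimes \omega_\X[1]$ on ${\rm D}^b(\X)$ (already recorded earlier in the paper), which yields
\[
	{\rm Ext}^1(\F, \G) \cong {\rm Hom}(\G, \F \otimes \omega_\X)^*.
\]
Since $\X$ has genus one, the computation $\vec{\omega} = 2\vec{c} - \sum_{i=1}^4 \vec{x}_i$ gives $\delta(\vec{\omega}) = 0$, so $\omega_\X$ has degree zero. Tensoring by a degree-zero line bundle preserves both rank and degree, hence preserves slope and semistability. In particular $\F \otimes \omega_\X$ is semistable of slope $q$, and because $\mu(\G) = q' > q = \mu(\F \otimes \omega_\X)$, part (1) applied to the pair $(\G, \F \otimes \omega_\X)$ forces this Hom-space to vanish, giving the desired conclusion.

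No step is a real obstacle. The only care needed is to fix a single convention for semistability (subobjects or quotients), so that the two inequalities in the image argument point the right way, together with a brief word on torsion sheaves in part (1). The genus-one hypothesis is essential in (2): for positive-degree $\omega_\X$ the slope of $\F \otimes \omega_\X$ would exceed $\mu(\F)$, and the reduction of (2) to (1) would fail, yielding only a weaker shifted vanishing.
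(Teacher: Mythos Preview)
Your argument is correct and is the standard one. Note, however, that the paper does not supply its own proof of this lemma: it is quoted verbatim as \cite[lemma 4.1]{LM} and used as a black box. Your image-slope argument for (1) and Serre-duality reduction for (2) are exactly the expected proof in the genus-one setting, so there is nothing to compare beyond observing that you have filled in what the paper leaves to the reference.
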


\begin{prop}[Weighted Riemann-Roch, \cite{LM}]\label{RR}We have 
	\[
		\chi(\F, \G) + \chi(\F, \G \otimes \omega_{\X}) = \begin{vmatrix} rk(\F) & rk(\G) \\ deg(\F) & deg(\G) \end{vmatrix}.
	\]
	In particular, we get
	\[
		\chi(\F) + \chi(\F \otimes \omega_{\X}) = deg(\F).
	\]
\end{prop}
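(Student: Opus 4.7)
The plan is to prove the bilinear identity by an additivity-and-Serre-duality argument, reducing to a small number of base cases. First I would define
\[
	\phi(\F, \G) := \chi(\F, \G) + \chi(\F, \G \otimes \omega_\X), \qquad \psi(\F, \G) := rk(\F) deg(\G) - rk(\G) deg(\F),
\]
both of which descend to bilinear forms on $K_0(\X) \otimes K_0(\X)$ by additivity of $\chi$, $rk$ and $deg$ on short exact sequences. Using that the Serre functor on ${\rm D}^b(\X)$ is $- \otimes \omega_\X[1]$ and that $\omega_\X^{\otimes 2} \cong \mathcal{O}$ (readable off $2\vec{x}_i = \vec{c}$ and $\vec{\omega} = 2\vec{c} - \sum \vec{x}_i$), Serre duality yields $\chi(\F, \G) = -\chi(\G, \F \otimes \omega_\X)$ and hence $\chi(\F, \G \otimes \omega_\X) = -\chi(\G \otimes \omega_\X, \F \otimes \omega_\X) = -\chi(\G, \F)$. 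Thus $\phi(\F, \G) = \chi(\F, \G) - \chi(\G, \F)$ is antisymmetric, as is $\psi$, so $\phi - \psi$ automatically vanishes on diagonal pairs and it suffices to verify equality on a spanning set of cross-pairs.

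By the short exact sequences $0 \to \mathcal{O}((j-1)\vec{x}_i) \to \mathcal{O}(j\vec{x}_i) \to S_{i, j} \to 0$ and $0 \to \mathcal{O}(-\vec{c}) \to \mathcal{O} \to S_x \to 0$, the class $[\mathcal{O}]$ together with the classes of simple torsion sheaves generate $K_0(\X)$. The cross-pairs to check therefore reduce to $(\mathcal{O}, S)$ for a simple torsion sheaf $S$, and $(S, S')$ for two distinct simple torsion sheaves. In the first case $\psi(\mathcal{O}, S) = deg(S)$, while $\phi(\mathcal{O}, S) = \chi(S) + \chi(S \otimes \omega_\X)$ is computable directly from the defining presentation: for an ordinary $S_x$ one gets $1 + 1 = 2 = deg(S_x)$ (using $S_x \otimes \omega_\X \cong S_x$), and for an exceptional pair one gets $\chi(S_{i, 0}) + \chi(S_{i, 1}) = 1 + 0 = 1 = deg(S_{i, 0})$, using that only $S_{i, 0}$ has a nonzero global section. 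In the second case $\psi(S, S') = 0$ by rank zero, and $\phi(S, S') = 0$ either because the supports are disjoint (forcing all Ext groups to vanish) or, for the single nontrivial pair $(S_{i, 0}, S_{i, 1})$ inside one exceptional rank-two tube, because the computation $\chi(S_{i, 0}, S_{i, 1}) + \chi(S_{i, 0}, S_{i, 0}) = -1 + 1 = 0$ (via the standard Hom/Ext computation between simples in a tube combined with Serre duality) gives the desired vanishing.

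The sub-formula $\chi(\F) + \chi(\F \otimes \omega_\X) = deg(\F)$ then follows immediately by specializing the main identity to first argument $\mathcal{O}$, which produces right-hand side $1 \cdot deg(\F) - rk(\F) \cdot 0 = deg(\F)$. The main obstacle I anticipate is the consistent bookkeeping for the exceptional simples $S_{i, j}$: one must keep straight the convention that $S_{i, 0}$ is the unique simple with a nonzero section together with $S_{i, j} \otimes \omega_\X = S_{i, j+1}$, to identify which cohomology groups appear in each case and to verify that the various paired sums are independent of the chosen reference index. None of the individual calculations are deep; the content of the argument lies in organizing everything so that bilinearity and antisymmetry reduce the statement to these few base cases.
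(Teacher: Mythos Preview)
The paper does not supply a proof of this proposition; it is quoted as a known result from Lenzing--Meltzer \cite{LM}. Your argument is correct and self-contained for the case at hand: bilinearity plus the Serre-duality identity $\chi(\F,\G)=-\chi(\G,\F\otimes\omega_\X)$ (together with $\omega_\X^{\otimes 2}\cong\mathcal{O}$) reduces everything to the antisymmetric form $\chi(\F,\G)-\chi(\G,\F)$, and your base-case checks on $\mathcal{O}$ and the simple torsion sheaves are accurate. The generating set you use is exactly the one the paper records in its discussion of $K_0(\X)\cong\widehat{\Gamma}$, so the reduction is legitimate. One small remark: your proof is specific to the $2$-torsion of $\omega_\X$ (hence to genus one), whereas the cited result in \cite{LM} is stated more generally with an average over the full $\langle\omega_\X\rangle$-orbit; but since that is precisely the form the paper needs and states, this is not a gap.
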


\subsection*{The shift autoequivalence}
The autoequivalence $M \mapsto M(1)$ on $\underline{\rm MCM}(gr R)$ induces an autoequivalence $\F_M \mapsto \F_M\{1\}$ on ${\rm D}^b(\X)$. Since shifting the grading acts by translation on Betti tables, it suffices to compute one Betti table in the orbit $\{M(n)\}_{n \in \Z}$, or equivalently one cohomology table in the orbit $\{\F_M\{n\} \}_{n \in \Z}$. First, we calculate the effect of $(1)$ on rank and degree. Observe that these pull back to functionals $rk, deg: K_0(\underline{\rm MCM}(gr R)) \to \Z$.

\begin{lem}\label{rkdeg} For any graded MCM module M, we have 
	\begin{align*}
		deg(M)  &= \chi(M, k^{st}) - \chi(M, k^{st}(-2)) \\
			&= \sum_{i \in \Z}(-1)^{i} \beta_{i, 0} - \sum_{i \in \Z}(-1)^{i} \beta_{i, 2},	\\
			rk(M) 	&= \frac{1}{2} \chi(M, k^{st}(-1)\oplus k^{st}(-2)) - \frac{1}{2} \chi( M, k^{st} \oplus k^{st}(-3)) \\
			&= \frac{1}{2} \sum_{i \in \Z} (-1)^{i}(\beta_{i, 1} + \beta_{i, 2}) - \frac{1}{2} \sum_{i \in \Z} (-1)^{i}(\beta_{i, 0} + \beta_{i,3}).
	\end{align*}
When M corresponds to a coherent sheaf, this simplifies to
\begin{align*}
	deg(M) &= (\beta_{0, 0} + \beta_{1,2}) - (\beta_{0,2} + \beta_{1,4}), \\
	rk(M) &= \frac{1}{2}(\beta_{0,1} + \beta_{0,2} + \beta_{1,3} + \beta_{1,4}) - \frac{1}{2}(\beta_{0,0} + \beta_{0,3} + \beta_{1,2} + \beta_{1,5}).
\end{align*}
\end{lem}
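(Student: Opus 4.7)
The second equality in each formula is tautological: by definition $\beta_{i,j}(M)=\dim_k\underline{{\rm Ext}}^i_{gr R}(M,k^{st}(-j))$, so $\sum_i(-1)^i\beta_{i,j}=\chi(M,k^{st}(-j))$ in the Grothendieck group of $\underline{{\rm MCM}}(gr R)$. So the plan reduces to proving the first equality, i.e.\ expressing the rank and degree functionals on $K_0(\X)$ as the indicated linear combinations of Euler pairings against the residue-field shifts.

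I would transfer everything to ${\rm D}^b(\X)$ via the equivalence and replace $k^{st}(-j)$ by its image from Theorem \ref{residuefields}. The $[1]$ shifts occurring for $j=0,1$ contribute a sign flip in $\chi$. For the degree formula, the combination becomes
\[
\chi(\F_M,\omega_\X[1])-\chi(\F_M,\mathcal{O})=-\bigl(\chi(\F_M,\mathcal{O})+\chi(\F_M,\mathcal{O}\otimes\omega_\X)\bigr).
\]
Applying weighted Riemann-Roch (\ref{RR}) with $\G=\mathcal{O}$ and using $rk(\mathcal{O})=1$, $deg(\mathcal{O})=0$, the right side collapses to $deg(\F_M)=deg(M)$, as desired.

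For the rank formula I would expand all four Euler pairings, group them into two applications of Riemann-Roch (one with $\G=\mathcal{O}$ and one with $\G=\mathcal{O}(-\vec{c})$), and use $deg(\mathcal{O}(-\vec{c}))=-\delta(\vec{c})=-2$ (since $p_i=2$ for all $i$). Concretely:
\[
\chi(\F_M,\mathcal{O})+\chi(\F_M,\omega_\X)=-deg(\F_M),\qquad\chi(\F_M,\mathcal{O}(-\vec{c}))+\chi(\F_M,\mathcal{O}(-\vec{c})\otimes\omega_\X)=-2\,rk(\F_M)-deg(\F_M),
\]
and the difference $\chi(M,k^{st}(-1)\oplus k^{st}(-2))-\chi(M,k^{st}\oplus k^{st}(-3))$ reorganizes into the difference of these two, yielding $2\,rk(\F_M)$; halving gives the stated formula.

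For the coherent-sheaf specialization, I would use that when $\F_M$ is a sheaf the Ext groups against sheaves in $\X$ vanish outside cohomological degrees $0,1$, so only two terms survive in each alternating sum. Combined with the periodicity $\beta_{i+2,j+4}=\beta_{i,j}$, negative-index terms such as $\beta_{-1,0}$ are reindexed to the listed eight $\beta_{0,*},\beta_{1,*}$ (e.g.\ $\beta_{-1,0}=\beta_{1,4}$). Substituting these into the already-proved Euler-characteristic identity yields the closed form in the lemma. The only real delicacy is this bookkeeping of the sign shifts coming from $[1]$ and the periodicity reindexing; the analytic content is entirely concentrated in the two Riemann-Roch applications above.
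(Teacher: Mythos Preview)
Your proof is correct and matches the paper's approach: both derive the degree formula from weighted Riemann--Roch via Theorem~\ref{residuefields}, and both obtain the rank formula from a second application of Riemann--Roch. The only cosmetic difference is that the paper packages the rank computation as $rk(\F)=\tfrac{1}{2}\bigl(deg(\F(\vec{c}))-deg(\F)\bigr)$ and reuses the degree formula, whereas you apply Riemann--Roch directly with $\G=\mathcal{O}(-\vec{c})$; these are the same computation.
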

\begin{proof} The formula for $deg(M)$ falls out of the weighted Riemann-Roch theorem via theorem \ref{residuefields}. To deduce the formula for $rk(M)$, we use $deg(\F(\vec{c})) = deg(\F) + 2 rk(\F)$, so that $rk(\F) = \frac{1}{2}(deg(\F(\vec{c})) - deg(\F))$, then collect terms via \ref{residuefields}.
\end{proof}

\begin{prop}\label{matrix} For any graded MCM module $M$, we have 
	\[
		\begin{pmatrix} rk(M(1)) \\ deg(M(1)) \end{pmatrix} = \begin{pmatrix} -1 & -1 \\ 2 & 1 \end{pmatrix} \begin{pmatrix} rk(M) \\ deg(M) \end{pmatrix}.
	\]
\end{prop}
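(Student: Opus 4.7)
\medskip

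\noindent\textbf{Proof proposal.} The plan is to express both $rk(M(1))$ and $deg(M(1))$ via the formulas of Lemma \ref{rkdeg}, and then to reduce them back to expressions in $rk(M)$ and $deg(M)$ by exploiting the fact that the internal shift $(1)$ is an exact autoequivalence of $\underline{\rm MCM}(gr\, R)$. The whole proof should collapse to a short linear-algebra verification, with no new cohomological input required.

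Concretely, I would introduce the shorthand $a_j := \chi(M, k^{st}(-j))$ for $j \in \Z$, so that Lemma \ref{rkdeg} reads
\[
	deg(M) = a_0 - a_2, \qquad 2\,rk(M) = a_1 + a_2 - a_0 - a_3.
\]
Since $(1)$ is an autoequivalence, the Euler pairing is invariant under applying it to both arguments simultaneously, whence $\chi(M(1), k^{st}(-j)) = \chi(M, k^{st}(-j-1)) = a_{j+1}$. Applying Lemma \ref{rkdeg} to $M(1)$ then yields
\[
	deg(M(1)) = a_1 - a_3, \qquad 2\,rk(M(1)) = a_2 + a_3 - a_1 - a_4.
\]

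The only delicate point, and the step I expect to be the main (if minor) obstacle in keeping the signs straight, is the appearance of $a_4$ in the second formula. Here one invokes the $2$-periodicity $(4) = [2]$ recalled in the introduction: it gives $k^{st}(-4) \cong k^{st}[-2]$, and since shifts contribute $(-1)^{-2}=1$ to Euler characteristics, $a_4 = \chi(M, k^{st}[-2]) = a_0$. With this substitution, the expressions for $rk(M(1))$ and $deg(M(1))$ in the $a_j$'s become $\tfrac12(a_2+a_3-a_1-a_0)$ and $a_1-a_3$ respectively, and a direct one-line computation matches these against $-rk(M)-deg(M)$ and $2\,rk(M)+deg(M)$, yielding both components of the asserted matrix identity.
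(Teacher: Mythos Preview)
Your argument is correct. The periodicity step is handled correctly: $(4)=[2]$ gives $a_4=a_0$, and the linear-algebra check goes through as you say.

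It is, however, a genuinely different route from the paper's own proof. The paper does not manipulate the formulas of Lemma \ref{rkdeg} for an arbitrary $M$; instead it observes that both $M\mapsto \bigl(rk(M),deg(M)\bigr)^T$ and the shift $(1)$ are additive on $K_0$, so it suffices to check the identity on a $\Z$-basis. It then takes the basis coming from the full exceptional collection $\bigl(k^{st}(-1),\,k^{st}(-2),\,L_1(-3)[1],\dots,L_4(-3)[1]\bigr)$, reads off the $(r,d)$-vectors of these objects and of their $(1)$-shifts (using the identification with sheaves on $\X$ from Theorem \ref{residuefields} for the $k^{st}$'s, and Lemma \ref{rkdeg} for $L_i(-2)[1]$), and verifies the matrix relation elementwise. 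Your approach avoids the choice of basis and the sheaf-side computations of $\ZZ$ on specific objects, trading them for the single invariance identity $\chi(M(1),k^{st}(-j))=a_{j+1}$ and the periodicity $a_4=a_0$; this is arguably cleaner. The paper's approach, on the other hand, makes the $K_0$-linearity of the statement explicit and records the concrete $(r,d)$-values of the generators, which are used elsewhere.
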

\begin{proof}Writing $\ZZ(M)$ for $\left(rk(M)\ deg(M)\right)^{T}$, we need to verify that 
	\[
		\xymatrix@C5pc{
			K_0 \ar[d]^{(1)}\ar[r]^{\ZZ} & \Z^2 \ar[d]^{\begin{psmallmatrix} -1 & -1 \\ 2 & 1 \end{psmallmatrix}} \\
		K_0 \ar[r]^{\ZZ} & \Z^2 \\
	}
\]
commutes. The full exceptional collection $\left(k^{st}(-1), k^{st}(-2), L_1(-3)[1], \dots, L_4(-3)[1] \right)$ descends to a $\Z$-basis of $K_0$, on which this can be checked. We have
\begin{align*}
&\ZZ\left(k^{st}(-2)\right) = \ZZ\left(\mathcal{O}\right) = (1\ 0)^T\\
	&\ZZ\left(k^{st}(-1)\right) = \ZZ\left(\mathcal{O}(-\vec{c})[1]\right) = (-1\ 2)^T\\
	&\ZZ\left(k^{st}\right) = \ZZ\left(\omega_{\X}[1]\right) = (-1\ 0)^T\\
	\\
	&\ZZ\left(L_i(-3)[1]\right) = \ZZ\left(S_{i, 0} \right) =  (0\ 1)^T\\
	&\ZZ\left(L_i(-2)[1]\right) = (-1\ 1)^T.
\end{align*}

The last line is calculated by lemma \ref{rkdeg}, and so the above diagram commutes.
\end{proof} 
\begin{rem} The above matrix has order $4$ in ${\rm SL}(2, \Z)$, corresponding to the identity $(4) = [2]$ on $\underline{{\rm MCM}}(gr R)$. \end{rem}
Writing $r, d$ for rank and degree, it is not hard to find a fundamental domain for the action of $(1)$ on the $(r,d)$ lattice\footnote{We will ignore $(0,0)$ since any indecomposable with $rk(M) = 0 = deg(M)$ must be zero.}. We will take our fundamental domain as the union of $3$ regions, as shown in figure $1$.
\begin{figure}\label{fig1}
   \begin{minipage}[r]{4cm}
       \begin{tikzpicture}
	    \draw[gray!50, thin, step=0.3] (-2,-2) grid (2,2);
	    \draw[very thick,->] (-2,0) -- (2.2,0) node[right] {$r$};
	    \draw[very thick,->] (0,-2) -- (0,2.2) node[above] {$d$};

	    \foreach \x in {-2,...,2} \draw (\x,0.03) -- (\x,-0.03) node[below] {\tiny\x};
	    \foreach \y in {-2,...,2} \draw (-0.03,\y) -- (0.03,\y) node[right] {\tiny\y};

	    \fill[blue!50!cyan,opacity=0.3] (2,0) -- (0,2) -- (2,2);
	    \fill[blue!50!cyan,opacity=0.3] (0,0) -- (0,2) -- (2,0);
	    \fill[blue!50!cyan,opacity=0.3] (0,0) -- (1,-2) -- (0,-2);

	    \draw (-1,2) -- node[below,sloped] {} (1,-2);
       \end{tikzpicture}
   \end{minipage}%
   \begin{minipage}[c]{\textwidth-7cm}
\begin{align}
	& r \geq 0,\ d > 0 \\
	& r > 0,\ d = 0 \\ 
	& r > 0,\ d < -2r.
\end{align}
   \end{minipage}
   \caption{Fundamental domain for the action of $(1)$ on $(r,d)$.}
\end{figure}
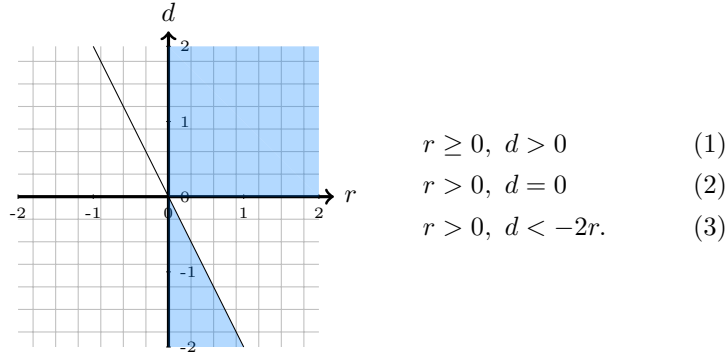
Note that since $r \geq 0$ throughout, each pair $(r,d)$ is realized by a coherent sheaf and we need not consider complexes. Another reason for this choice is to maximize vanishing patterns in the cohomology table
\[
	\beta(\F) = \begin{pmatrix*}[l] h^0(\mathcal{F}) & h^0(\mathcal{F} \otimes \omega_{\X}) \\ h^0(\mathcal{F}(\vec{c}) \otimes \omega_{\X}) & h^0(\mathcal{F}(\vec{c})) \\ h^1(\mathcal{F}\otimes \omega_{\X}) & h^1(\mathcal{F}) \\ h^1(\mathcal{F}(\vec{c})) & h^1(\mathcal{F}(\vec{c}) \otimes \omega_{\X}) \\   \end{pmatrix*}
\]

\begin{lem}\label{h-vanishing} Let $\F$ be an indecomposable coherent sheaf.
	\begin{enumerate}
		\item For $\F$ in region $(1)$, $h^1(\F) = h^1(\F \otimes \omega_{\X}) = h^1(\F(\vec{c})) = h^1(\F(\vec{c})\otimes \omega_{\X}) = 0.$
	\item For $\F$ in region $(3)$, $h^0(\F) = h^0(\F \otimes \omega_{\X}) = h^0(\F(\vec{c})) = h^0(\F(\vec{c})\otimes \omega_{\X}) = 0.$	\end{enumerate}
\end{lem}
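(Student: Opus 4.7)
The plan is to apply Serre duality together with the slope-vanishing criterion of Lemma \ref{slopevanishing}. Since $\mathcal{F}$ is indecomposable it is semistable, i.e.\ lies in some $\mathcal{C}_q$ with $q = \mu(\mathcal{F}) \in \mathbb{Q} \cup \{\infty\}$; meanwhile the line bundles $\mathcal{O}$ and $\omega_{\X}$ are stable of slope $0$. Tensoring with a line bundle is an exact autoequivalence, so each of $\mathcal{F}$, $\mathcal{F} \otimes \omega_\X$, $\mathcal{F}(\vec{c})$, $\mathcal{F}(\vec{c}) \otimes \omega_\X$ is again semistable; and tensoring with $\omega_\X$ preserves slope (as $\deg \omega_\X = 0$) while tensoring with $\mathcal{O}(\vec{c})$ raises slope by $2$ (as $\deg \mathcal{F}(\vec{c}) = \deg \mathcal{F} + 2\,\mathrm{rk}\,\mathcal{F}$). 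Hence the four twists have slopes $q, q, q+2, q+2$ respectively.

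Next I would rewrite the eight cohomology groups appearing in the lemma as Hom-spaces between semistables. By Serre duality on $\X$, for any coherent $\mathcal{G}$ one has $h^1(\mathcal{G}) = \dim \mathrm{Hom}(\mathcal{G}, \omega_\X)$ and $h^0(\mathcal{G}) = \dim \mathrm{Hom}(\mathcal{O}, \mathcal{G})$. Applied to the four twists $\mathcal{F}_\ast$ this expresses each of the $h^1(\mathcal{F}_\ast)$ (resp.\ $h^0(\mathcal{F}_\ast)$) as a morphism space from a semistable of slope $q$ or $q+2$ into $\omega_\X$ (resp.\ from $\mathcal{O}$ into a semistable of slope $q$ or $q+2$), to which Lemma \ref{slopevanishing} can be applied directly.

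For region (1), $r \geq 0$ and $d > 0$ force $q > 0$ (with the convention $q = \infty$ when $r = 0$, at which point $\mathcal{F}$ is torsion and lies in $\mathcal{C}_\infty$), and hence $q+2 > 0$ as well. Since $\mu(\omega_\X) = 0$, Lemma \ref{slopevanishing}(1) gives $\mathrm{Hom}(\mathcal{F}_\ast, \omega_\X) = 0$ for each of the four twists, yielding the four vanishings of $h^1$. For region (3), $r > 0$ and $d < -2r$ force $q < -2$ and therefore $q+2 < 0$; since $\mu(\mathcal{O}) = 0$, the same lemma now gives $\mathrm{Hom}(\mathcal{O}, \mathcal{F}_\ast) = 0$, yielding the four vanishings of $h^0$.

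There is no real obstacle here; the argument is a bookkeeping exercise once one observes that the whole fundamental domain in regions (1) and (3) was chosen precisely to put every relevant slope strictly on one side of $0$. The only point worth verifying carefully is the edge case $r = 0$ in region (1), where one must allow the slope $\infty$ in Lemma \ref{slopevanishing} (equivalently, use that a nonzero torsion sheaf admits no nonzero map to a line bundle), which is already built into the Lenzing--Meltzer framework recalled above.
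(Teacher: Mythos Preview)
Your proof is correct and follows the same slope argument via Lemma~\ref{slopevanishing} that the paper sketches. The only cosmetic difference is that you pass through Serre duality to rewrite $h^1(\mathcal{F}_\ast)$ as $\dim\mathrm{Hom}(\mathcal{F}_\ast,\omega_\X)$ before invoking part~(1) of the lemma, whereas one can equally apply part~(2) directly to $h^1(\mathcal{F}_\ast)=\dim\mathrm{Ext}^1(\mathcal{O},\mathcal{F}_\ast)$ and skip that step.
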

\begin{proof} These follow from Lemma \ref{slopevanishing} by slope arguments, using the formula $$\mu(\F \otimes \mathcal{L}) = \mu(\F) + deg(\mathcal{L})$$ for a line bundle $\mathcal{L}$. \end{proof}
This lemma reduces calculations in regions $(1), (3)$ to computing Euler characteristics, and region $(2)$ can be dealt with by hand.

\section{Cohomology tables of indecomposable coherent sheaves}
\subsection*{Cohomology tables for rank one tubes}
We are now in a position to compute the cohomology tables of indecomposable sheaves. We will list the corresponding possible Betti tables of matrix factorizations in a later section, under a different normalization. We begin with indecomposables living in rank one tubes, or equivalently which satisfy $\F \otimes \omega_{\X} \cong \F$.

\begin{thm}\label{rankone} Let $(r,d)$ be in the fundamental domain with $q = \frac{d}{r}$. Consider $\F$ with $\F \otimes \omega_{\X} \cong \F$ and $(rk(\F), deg(\F)) = (r,d)$. An indecomposable such $\F$ exists if and only if $gcd(r,d)$ is even, in which case $\frac{|gcd(r,d)|}{2}$ gives the length of $\F$ in $\mathcal{C}_q$. The cohomology table $\beta(\F)$ is then given as:
\[
	\begin{tabular}{|c|c|c|}
	\hline
	$r \geq 0,\ d > 0$ & $r > 0,\ d = 0$ & $r > 0,\ d < -2r$ \\
	\hline
	&& \\
	$\begin{pmatrix} \frac{d}{2} & \frac{d}{2} \\ \frac{d}{2} + r & \frac{d}{2} + r \\ 0 & 0 \\ 0 & 0 \\ \end{pmatrix}$
	& $\begin{pmatrix} 0 & 0 \\ r & r \\ 0 & 0 \\ 0 & 0 \\ \end{pmatrix}$
	& $\begin{pmatrix} 0 & 0 \\ 0 & 0 \\ -\frac{d}{2} & -\frac{d}{2} \\ -\frac{d}{2} - r & -\frac{d}{2} - r \end{pmatrix}$ \\
	&& \\
	\hline
\end{tabular}
\]
\end{thm}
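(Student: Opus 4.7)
My plan has three phases: (a) establish existence and the length formula from the tube classification of indecomposables in $\mathcal{C}_q$; (b) compute $\beta(\F)$ in the ``easy'' regions $(1)$ and $(3)$ via Lemma \ref{h-vanishing} and weighted Riemann-Roch; (c) handle the slope-zero region $(2)$, which I expect to be the main obstacle.

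For phase (a), I would first note that the condition $\F \otimes \omega_{\X} \cong \F$ singles out indecomposables in a rank-one tube: in a rank-two tube the autoequivalence $- \otimes \omega_{\X}$ swaps the two quasi-simples, so no indecomposable starting at a given quasi-simple is fixed. In $\mathcal{C}_{\infty}$, rank-one tubes are parametrized by ordinary points $x$ with quasi-simple $S_x$ of invariants $(0, 2)$, hence the length-$\ell$ indecomposable has $\gcd(rk, deg) = 2\ell$. The tubular autoequivalences $R, S$ act on $(rk, deg)$ via elements of $SL(2, \Z)$, which preserve both the tube-rank of an indecomposable and the $\gcd$ of an integer vector (any element of $SL(2, \Z)$ takes primitive vectors to primitive vectors). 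Composing them to realize $\mathcal{C}_{\infty} \xrightarrow{\sim} \mathcal{C}_q$, the length-$\ell$ rank-one-tube indecomposables transport to indecomposables with $\gcd(r, d) = 2\ell$ in $\mathcal{C}_q$, yielding both the existence criterion and the length formula.

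For phase (b), the isomorphism $\F \otimes \omega_{\X} \cong \F$ forces the two columns of $\beta(\F)$ to coincide. In region $(1)$, Lemma \ref{h-vanishing}(1) kills the four $h^1$-entries, and weighted Riemann-Roch (Proposition \ref{RR}) applied to $\F$ and to $\F(\vec{c})$ gives $2 h^0(\F) = \deg(\F) = d$ and $2 h^0(\F(\vec{c})) = d + 2r$, producing the table. Region $(3)$ is Serre-dual: Lemma \ref{h-vanishing}(2) kills the $h^0$-entries, and the same Riemann-Roch computation yields $h^1(\F) = -d/2$ and $h^1(\F(\vec{c})) = -d/2 - r$.

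Phase (c) is the main obstacle, because in slope $q = 0$ both $\mathcal{O}$ and $\F$ lie in $\mathcal{C}_0$ and Lemma \ref{slopevanishing} no longer kills ${\rm Hom}(\mathcal{O}, \F)$. My key observation would be that $\mathcal{O}$ and $\omega_{\X}$ are stable of slope $0$ and satisfy $\mathcal{O} \otimes \omega_{\X} = \omega_{\X} \ncong \mathcal{O}$, so they form the two quasi-simples of a common rank-two tube of $\mathcal{C}_0$. Since $\F$ sits in a rank-one tube by phase (a), and distinct tubes of the serial category $\mathcal{C}_0$ are Hom-orthogonal, one gets $h^0(\F) = 0 = h^0(\F \otimes \omega_{\X})$; Serre duality ${\rm Ext}^1(\mathcal{O}, \F) = {\rm Hom}(\F, \omega_{\X})^*$ together with the same tube-orthogonality kills the corresponding $h^1$-entries (alternatively, $\chi(\F) = d/2 = 0$ combined with $h^0 = 0$ forces $h^1 = 0$). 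For the other column, $\F(\vec{c})$ has slope $2$, so Lemma \ref{slopevanishing} via Serre duality yields $h^1(\F(\vec{c})) = 0$, and Riemann-Roch gives $h^0(\F(\vec{c})) = \chi(\F(\vec{c})) = r$. Assembling these entries recovers the stated table.
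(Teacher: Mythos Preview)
Your proposal is correct and follows essentially the same route as the paper: existence and the length formula via the tubular equivalence $\mathcal{C}_q \cong \mathcal{C}_\infty$ and the $SL(2,\Z)$-invariance of $\gcd$, regions $(1)$ and $(3)$ via Lemma \ref{h-vanishing} and Riemann-Roch, and region $(2)$ via the observation that $\mathcal{O}$ lies in a rank-two tube of $\mathcal{C}_0$ distinct from the rank-one tube containing $\F$. The only cosmetic difference is in region $(2)$: the paper transports $\mathcal{O}$ and $\F$ through $\Phi_{0,\infty}$ to skyscraper sheaves with disjoint support to obtain ${\rm Ext}^*(\mathcal{O},\F)=0$, whereas you argue the same vanishing directly from Hom-orthogonality of distinct tubes in the serial category $\mathcal{C}_0$ together with Serre duality (or $\chi=0$); these are equivalent formulations of the same mechanism.
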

\begin{proof} Let $\Phi_{q, \infty}: \mathcal{C}_q \xrightarrow{\cong} \mathcal{C}_\infty$ be the Lenzing-Meltzer autoequivalence, which is a composite of $R^{\pm}, S^{\pm}$ and thus acts on $(r,d)$ by $SL(2, \Z)$ transformations. The torsion sheaf $\Phi_{q, \infty}(\F)$ has type $(r', d') = (0, d')$ with $d' = |gcd(r', d')| = |gcd(r,d)|$. Since $\F \otimes \omega_\X \cong \F$, $\Phi_{q, \infty}(\F)$ is supported on an ordinary point and $d'$ must be even, and is simple in $\mathcal{C}_\infty$ precisely when $d' = 2$. This proves the claim except for the shape of $\beta(\F)$.
	
Now, $\F \mapsto \F \otimes \omega_{\X}$ acts by column change on cohomology tables, and so $\beta(\F)$ is symmetrical. By Riemann-Roch we have $2 \cdot \chi(\F) = d$ and $2 \cdot \chi(\F(\vec{c})) = d + 2r$. Combining this with Lemma $\ref{h-vanishing}$ determines tables in region $(1), (3)$. Next, let $\F$ be in region $(2)$. Then both $\mathcal{O}$, $\F$ are in $\mathcal{C}_0$, and $\mathcal{O}$ lives in a rank two tube. Applying $\Phi_{0, \infty}$ to both sends them to skyscraper sheaves with disjoint support, and thus ${\rm Ext}^*(\mathcal{O}, \F) = 0$. An application of Lemma \ref{h-vanishing} and Riemann-Roch as above determines $\beta(\F)$ in $(2)$.  
\end{proof}

\subsection*{Cohomology tables for rank two tubes}
We now study indecomposable sheaves $\F$ with $\F \otimes \omega_{\X} \ncong \F$. We begin with some generalities.

\begin{prop}\label{ranktwoprop} Let $(r,d)$ be in the fundamental domain and $q = \frac{d}{r}$. The following hold:
	\begin{enumerate}
		\item For any $(r,d)$, there is an indecomposable $\F$ with $(rk(\F), deg(\F)) = (r,d)$ and $\F \otimes \omega_\X \ncong \F$.
		\item Any such indecomposable $\F$ has length $|gcd(r,d)|$ in $\mathcal{C}_q$.
		\item There are finitely many indecomposable sheaves of type $(r,d)$ if and only if $gcd(r,d)$ is odd, in which case there are exactly eight.
		\item There is an exceptional sheaf of type $(r,d)$ if and only if $|gcd(r,d)| = 1$, in which case all such indecomposables are exceptional.
		\item When $gcd(r,d)$ is even and $d \neq 0$, $\beta(\F) = \beta(\widetilde{\F})$ where $\widetilde{\F}$ is indecomposable of same rank and degree, and $\widetilde{\F} \otimes \omega_{\X} \cong \widetilde{\F}$.
	\end{enumerate}
\end{prop}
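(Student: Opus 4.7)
The plan is to transport the problem to the category $\mathcal{C}_\infty$ of torsion sheaves via the Lenzing-Meltzer autoequivalence $\Phi_{q,\infty}\colon \mathcal{C}_q \xrightarrow{\cong} \mathcal{C}_\infty$ constructed above. Since $\Phi_{q,\infty}$ is built from the tubular mutations $R^{\pm}, S^{\pm}$, it acts on rank and degree by $SL(2,\Z)$-matrices, so $d' := |\gcd(r,d)|$ is invariant; and being defined via (co)reflections at $\omega_\X$ and at simple torsion sheaves it commutes with $-\otimes \omega_\X$ (being exact and compatible with the Serre functor $\mathbb{S}_\X = -\otimes \omega_\X[1]$), so the rank-$1$ vs.\ rank-$2$ tube dichotomy is preserved. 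Recall the tube structure of $\mathcal{C}_\infty$: each ordinary $x$ gives a rank-$1$ tube with unique simple $S_x$ of degree $p = 2$, while each exceptional $\lambda_i$ gives a rank-$2$ tube with two simples $S_{i,0}, S_{i,1}$ of degree $1$, exchanged by $-\otimes \omega_\X$.

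Parts (1) and (2) now follow immediately: in any one of the four rank-$2$ tubes there is, for every $n \geq 1$, a length-$n$ indecomposable (automatically with $T \otimes \omega_\X \ncong T$), so taking $n = d'$ and pulling back via $\Phi_{q,\infty}^{-1}$ produces the required $\F$. Since $\Phi_{q,\infty}$ is an equivalence of serial categories, it preserves length; a length-$d'$ indecomposable in a rank-$2$ tube has length $d'$ as its simples have degree $1$, proving (2).

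For (3), an indecomposable torsion sheaf of degree $d'$ is either supported at an ordinary point (where the simple has degree $2$, so $d'$ must be even, yielding an infinite family parameterized by $x$) or lies in one of the four rank-$2$ tubes (each contributing two length-$d'$ indecomposables, eight total). Hence finiteness of the total count is equivalent to $d'$ being odd, in which case the count is exactly eight. For (4), an indecomposable in a tube is exceptional iff it is simple: by Auslander-Reiten, $\mathrm{Ext}^1(T_n, T_n) \cong D\,\mathrm{Hom}(T_n, T_n \otimes \omega_\X)$, and for $n \geq 2$ in a rank-$2$ tube this Hom-space is nonzero since $T_n$ and $T_n \otimes \omega_\X$ share a common length-$(n-1)$ subquotient, while for $n \geq 1$ in a rank-$1$ tube it is nonzero since $T_n \otimes \omega_\X \cong T_n$. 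Thus the exceptional torsion sheaves are precisely the degree-$1$ simples $S_{i,j}$, which gives both directions of (4) and shows the eight indecomposables of type $(r,d)$ with $|\gcd(r,d)|=1$ are all exceptional.

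Finally, for (5), with $\gcd(r,d)$ even and $d \neq 0$ the type $(r,d)$ lies in region $(1)$ or $(3)$, so Lemma~\ref{h-vanishing} kills four entries of $\beta(\F)$ and reduces the remaining four to Euler characteristics. The length-$d'$ indecomposable $T$ in a rank-$2$ tube with $d'$ even has composition series containing $d'/2$ copies of each $S_{i,0}, S_{i,1}$, so $[T] = [T \otimes \omega_\X]$ in $K_0(\X)$; since $\Phi_{q,\infty}$ is an autoequivalence commuting with $-\otimes \omega_\X$, this yields $[\F] = [\F \otimes \omega_\X]$ in $K_0(\X)$, and Riemann-Roch gives $\chi(\F) = \chi(\F \otimes \omega_\X) = \deg(\F)/2$ and similarly for $\F(\vec{c})$. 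These values reproduce $\beta(\widetilde{\F})$ from Theorem~\ref{rankone}, proving $\beta(\F) = \beta(\widetilde{\F})$. The main obstacle is (4): pinning down exceptionality inside a tube via the AR-formula, especially verifying $\mathrm{Hom}(T_n, T_n \otimes \omega_\X) \neq 0$ for $n \geq 2$, which is standard but requires some care in identifying a common subquotient in the uniserial structure of rank-$2$ tubes.
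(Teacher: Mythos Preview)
Your proof is correct and follows essentially the same route as the paper: transport everything to $\mathcal{C}_\infty$ via the Lenzing--Meltzer autoequivalence $\Phi_{q,\infty}$, read off (1)--(4) from the explicit tube structure of torsion sheaves, and for (5) use that an even-length indecomposable in a rank-$2$ tube has balanced $K_0$-class. The paper is terser on (1)--(4), simply pointing to $\Phi_{q,\infty}$, whereas you spell out the Serre-duality computation pinning down exceptionality in (4); for (5) the paper phrases the $K_0$ argument as $[\F] = [\widetilde{\F}]$ for an explicitly constructed ordinary $\widetilde{\F}$, while you instead show $[\F] = [\F\otimes\omega_\X]$ and match the resulting Euler characteristics to the formula of Theorem~\ref{rankone}---the two are equivalent.
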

\begin{proof} The first four points follow from the autoequivalence $\Phi_{q, \infty}$ as in the proof of theorem \ref{rankone}. For $(5)$, similarly reduce to skyscraper sheaves. Let $S^{[2n]}$ be an indecomposable torsion sheaf supported over the exceptional point $x_i$ of degree $2n$. Then $[S^{[2n]}]$ has height $2n$ in its tube, and computing Grothendieck classes gives $[S^{[2n]}] = n [S_{i, 0}] + n[S_{i, 1}]$. In particular the ``ordinary'' torsion sheaf $S_{x}$ for $x = x_i$ has degree $2$, and we have $[S_x] = [S_{i,0}] + [S_{i,1}]$. From the presentation $0 \to \mathcal{O}(-\vec{c}) \to \mathcal{O} \to S_x \to 0$ we see that $[S_x] = [S_{x'}]$ for any ordinary point $x'$, and so $[S^{[2n]}] = n[S_x] = n[S_{x'}] = [S_{x'}^{[n]}]$ where $S_{x'}^{[n]}$ is a length $n$ indecomposable sheaf supported at $x'$. 
	
	Pulling back through $\Phi_{q, \infty}$, we deduce that for any indecomposable $\F$ with $gcd(r,d)$ even, there is another indecomposable $\widetilde{\F}$ of type $(r,d)$ with $[\widetilde{\F}] = [\F]$ and $\widetilde{\F}\otimes \omega_{\X} \cong \widetilde{\F}$. Since $[\widetilde{\F}] = [\F]$, we have $\chi(\widetilde{\F} \otimes \mathcal{L}) = \chi(\F \otimes \mathcal{L})$ for any line bundle $\mathcal{L}$, and outside of the case $d = 0$, those values determine the cohomology table, hence $\beta(\F) = \beta(\widetilde{\F})$.
\end{proof}

The remainder of the section will be aimed at proving the next theorem.
\begin{thm}\label{ranktwothm} Let $(r,d)$ be in the fundamental domain. The $\beta(\F)$ of indecomposables of type $(r,d)$ satisfying $\F \otimes \omega_{\X} \ncong \F$ are listed as follows: 
\[
\resizebox{\columnwidth}{!}{%
\begin{tabular}{|c|c|c|}
	\hline
	$(r,d)$ & $r \geq 0,\ d > 0$ & $r > 0,\ d < -2r$ \\
	\hline
	&& \\
$d$ odd & $\begin{pmatrix} \frac{d \pm 1}{2} & \frac{d\mp 1}{2} \\ \frac{d\mp1}{2} + r & \frac{d\pm1}{2} + r \\ 0 & 0 \\ 0 & 0 \\ \end{pmatrix}_4$ & $\begin{pmatrix} 0 & 0 \\ 0 & 0 \\ -\frac{d\pm1}{2} & -\frac{d\mp1}{2} \\ -\frac{d\mp1}{2} - r & -\frac{d\pm1}{2} - r  \\ \end{pmatrix}_4$ \\
	&& \\   
	\hline
	&& \\ 	
	(odd, even) & $\begin{pmatrix} \frac{d}{2} \pm 1 & \frac{d}{2} \mp 1 \\ \frac{d}{2} \mp 1 + r  & \frac{d}{2} \pm 1 + r \\ 0 & 0 \\ 0 & 0 \\ \end{pmatrix}_1  \begin{pmatrix}\frac{d}{2} & \frac{d}{2} \\ \frac{d}{2} + r & \frac{d}{2} + r \\ 0 & 0 \\ 0 & 0 \\ \end{pmatrix}_6$ & $\begin{pmatrix} 0 & 0 \\ 0 & 0 \\ -\frac{d}{2} \pm 1 & -\frac{d}{2} \mp 1 \\ -\frac{d}{2} \mp 1 - r & -\frac{d}{2} \pm1 - r \\ \end{pmatrix}_1 \begin{pmatrix} 0 & 0 \\ 0 & 0 \\ -\frac{d}{2} & -\frac{d}{2} \\ -\frac{d}{2} - r & -\frac{d}{2} - r \\ \end{pmatrix}_6$ \\
	&& \\
	\hline
	&& \\
	(even, even) & $\begin{pmatrix} \frac{d}{2} & \frac{d}{2} \\ \frac{d}{2} + r & \frac{d}{2} + r  \\ 0 & 0 \\ 0 & 0 \\ \end{pmatrix}_8$ & $\begin{pmatrix} 0 & 0 \\ 0 & 0 \\ -\frac{d}{2} & -\frac{d}{2} \\ -\frac{d}{2} - r & -\frac{d}{2} - r \\ \end{pmatrix}_8$  \\
	&& \\
	\hline
\end{tabular}%
}
\]
\[
\begin{tabular}{|l|c|c|}
	\hline
	$r > 0,\ d = 0$ & Tube contains $\mathcal{O}_{\X}$ & Tube does not contain $\mathcal{O}_{\X}$ \\
	\hline
	&& \\
	r odd & $\begin{pmatrix} 1 & 0 \\ r-1 & r+1 \\ 1 & 0 \\ 0 & 0 \\ \end{pmatrix}_1 \begin{pmatrix} 0 & 1 \\ r+1 & r-1 \\ 0 & 1 \\ 0 & 0 \\ \end{pmatrix}_1$  & $\begin{pmatrix} 0 & 0 \\ r & r \\ 0 & 0 \\ 0 & 0 \\ \end{pmatrix}_6$ \\
	&& \\ 
	\hline
	&& \\
	r even & $\begin{pmatrix} 1 & 0 \\ r & r \\ 0 & 1 \\ 0 & 0 \\ \end{pmatrix}_1$ $\begin{pmatrix} 0 & 1 \\ r & r \\ 1 & 0 \\ 0 & 0 \\ \end{pmatrix}_1$ & $\begin{pmatrix} 0 & 0 \\ r & r \\ 0 & 0 \\ 0 & 0 \\ \end{pmatrix}_6$ \\
	&& \\ 
	\hline
\end{tabular}
\]
The subscript counts the number of indecomposables satisfying $\F \otimes \omega_{\X} \ncong \F$  with given cohomology table.
\end{thm}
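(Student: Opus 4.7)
The proof naturally divides into three cases on $(r,d)$, each combining Euler characteristic bookkeeping with targeted direct computations.

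\emph{Case 1 (even $\gcd(r,d)$, $d\neq 0$):} Proposition \ref{ranktwoprop}(5) gives $\beta(\F)=\beta(\widetilde{\F})$ for a rank-one-tube indecomposable $\widetilde{\F}$ of identical rank and degree, so the cohomology table is copied directly from Theorem \ref{rankone}; the multiplicity $8$ is Proposition \ref{ranktwoprop}(3).

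\emph{Case 2 ($\gcd(r,d)$ odd, $d\neq 0$):} By Proposition \ref{ranktwoprop} there are exactly $8$ exceptional indecomposables of type $(r,d)$. Apply Lemma \ref{h-vanishing} to zero out two of the four rows of $\beta(\F)$, so the remaining entries are Euler characteristics. Weighted Riemann-Roch (Proposition \ref{RR}) provides the two column sums, $d$ and $d+2r$. For the column splits, transport $\F$ via $\Phi_{q,\infty}$ to a length-$n$ indecomposable $\F'$ in a rank-$2$ tube of $\mathcal{C}_\infty$ over some $\lambda_i$, where $n=|\gcd(r,d)|$. Inside that tube the two types have $[\F']-[\F'\otimes\omega_{\X}]=\pm\bigl([S_{i,0}]-[S_{i,1}]\bigr)$, so the split equals $\chi\bigl(\Phi_{q,\infty}(\mathcal{O}),[S_{i,0}]-[S_{i,1}]\bigr)$, evaluated by computing the $K_0$-class of $\Phi_{q,\infty}(\mathcal{O})$. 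For $d$ odd the column sum is odd, forcing an asymmetric split in every tube and yielding the two shapes with multiplicity $4$ each. For $(r,d)=(\text{odd},\text{even})$, three of the four tubes split symmetrically as $(d/2,d/2)$ while the tube containing $\Phi_{q,\infty}(\mathcal{O})$ as a composition factor splits asymmetrically, producing the $2\cdot 1+6=8$ pattern.

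\emph{Case 3 ($d=0$):} Here both $\F$ and $\mathcal{O}$ lie in $\mathcal{C}_0$. For $\F$ in one of the three rank-$2$ tubes of $\mathcal{C}_0$ not containing $\mathcal{O}$, seriality of $\mathcal{C}_0$ combined with Serre duality gives $\mathrm{Ext}^{*}(\mathcal{O},\F)=0$, killing the left half of $\beta(\F)$. The right half concerns $\F(\vec{c})$ of slope $2$: Lemma \ref{slopevanishing} kills $h^1$, and additivity along the filtration by simples $\Sigma$ -- each a degree-$0$ line bundle with $h^0(\Sigma(\vec{c}))=1$ -- gives $h^0(\F(\vec{c}))=h^0(\F(\vec{c})\otimes\omega_{\X})=r$ uniformly, accounting for the $6$-entry. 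For the two length-$r$ indecomposables in the rank-$2$ tube of $\mathcal{O}$ (with simples $\mathcal{O}$ and $\omega_{\X}$), compute each cohomology value inductively along the filtration by alternating composition factors, seeded with $h^{*}(\mathcal{O}), h^{*}(\omega_{\X}), h^{*}(\mathcal{O}(\vec{c})), h^{*}(\omega_{\X}(\vec{c}))$; swapping the starting simple produces the two asymmetric shapes, each of multiplicity $1$.

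The \emph{main obstacle} lies in Case 2: identifying which tube of $\mathcal{C}_\infty$ is special for a given $(r,d)$ with $d$ even, which reduces to locating the $K_0$-class of $\Phi_{q,\infty}(\mathcal{O})$ within the tubular decomposition and verifying that its Euler pairing against $[S_{i,0}]-[S_{i,1}]$ vanishes in three tubes and is nonzero in the fourth. Tracking this class through the composition of $R, S$-autoequivalences is the main computational labor; once done, the sign patterns and multiplicities fall out by additive arithmetic.
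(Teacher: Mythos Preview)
Your overall architecture is sound and Cases 1 and 3 essentially match the paper. The substantive divergence, and the gap, is in Case 2.

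The paper does \emph{not} compute the ``column splits'' by transporting $\mathcal{O}$ through $\Phi_{q,\infty}$ and evaluating Euler pairings against $[S_{i,0}]-[S_{i,1}]$. Instead it invokes Crawley-Boevey's version of Kac's Theorem: when $\gcd(r,d)$ is odd the class $[\F]$ is a real root of the loop algebra of affine $\widetilde{D}_4$, and all positive real roots of $\widetilde{D}_4$ are enumerated by solving $q(\alpha)=1$ directly (this is Figure~2). From that table one reads off every possible triple $(r,d,\chi(\F))$, yielding Lemma~\ref{chi}: $\chi(\F)\in\{\tfrac{d-1}{2},\tfrac{d+1}{2}\}$ with multiplicity $4$ each when $d$ is odd, and $\chi(\F)\in\{\tfrac{d}{2}-1,\tfrac{d}{2},\tfrac{d}{2}+1\}$ with multiplicities $1,6,1$ when $d$ is even. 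The paper then combines this with Riemann--Roch and one more constraint you omit---that the two columns of $\beta(\F)=\beta(M)$ have equal sum because $M$ is presented by a square matrix factorization---to pin down the second row from the first.

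Your transport argument, by contrast, is incomplete at the decisive step. Saying ``$d$ odd forces an asymmetric split'' only shows $h^0(\F)\neq h^0(\F\otimes\omega_\X)$; it does not exclude splits like $\bigl(\tfrac{d+3}{2},\tfrac{d-3}{2}\bigr)$. To get $\pm 1$ you must actually show $\chi(\Phi_{q,\infty}(\mathcal{O}),[S_{i,0}]-[S_{i,1}])=\pm 1$ for all four $i$, and likewise in the (odd,\,even) case you must show this pairing is $\pm 2$ in exactly one tube and $0$ in the other three. You acknowledge this as ``the main computational labor'' but defer it entirely; since the word $w_q(R,S)$ has unbounded length as $q$ varies, it is not clear this is even uniformly tractable. (The phrase ``the tube containing $\Phi_{q,\infty}(\mathcal{O})$ as a composition factor'' is also not well-posed: $\Phi_{q,\infty}(\mathcal{O})$ need not lie in $\mathcal{C}_\infty$.) The root-enumeration route bypasses all of this with a finite list, which is why the paper takes it.
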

{\bf The proof strategy}. We will make use of Crawley-Boevey's generalization of Kac's Theorem for weighted projective lines. By the previous proposition, indecomposables with $gcd(r,d)$ odd correspond to real roots of the associated root system, which are enumerated in a standard basis for $K_0$. Going through the list, one tabulates all triples $(rk(\F), deg(\F), \chi(\F))$ coming from real roots $[\F]$, and this triple completely determines $\beta(\F)$ in regions $(1), (3)$. The region $(2)$ is then dealt with by hand. We first recall the needed notions, following \cite{CB, Schiff}. 

\subsection*{Kac's theorem, after Schiffmann-Crawley-Boevey}Let $\X = \mathbb{P}^1({\bf p}, \pmb{\lambda})$ be a general weighted projective line for now, and let $T_{can} = \bigoplus_{\vec{0} \leq \vec{x} \leq \vec{c}} \mathcal{O}(\vec{x})$ be the canonical tilting object with endomorphism algebra $\Lambda_{can} = kQ/I$ with quiver $Q$:
\[
	\xymatrix@R.5pc{ & \vec{x_1} \ar[r] & 2\vec{x_1} \ar[r] & \dots \ar[r] & (p_1-1)\vec{x_1} \ar[rdd] & \\
		& \vec{x_2} \ar[r] & 2\vec{x_2} \ar[r] & \dots \ar[r] & (p_2-1)\vec{x_2} \ar[rd] & \\	
	 \vec{0} \ar[ruu] \ar[ru] \ar[rd] &\vdots & \vdots & \vdots & \vdots & \vec{c} \\
	 & \vec{x}_n \ar[r] & 2\vec{x}_n \ar[r] & \dots \ar[r] & (p_n -1)\vec{x}_n \ar[ru] & \\	
	}
\]
Let $Q'$ be the tree subquiver corresponding to $T' = \bigoplus_{\vec{0} \leq \vec{x} < \vec{c}} \mathcal{O}(\vec{x})$, which we label differently as
\[
	\xymatrix@R.5pc{ & 1,1 \ar[r] & 1,2 \ar[r] & \dots \ar[r] & 1, p_1-1 \\
		& 2,1 \ar[r] & 2,2 \ar[r] & \dots \ar[r] & 2, p_2-1 \\	
	 0 \ar[ruu] \ar[ru] \ar[rd] &\vdots&\vdots&\vdots&\vdots \\
	 & n,1 \ar[r] & n,2 \ar[r] & \dots \ar[r] & n, p_n -1 \\	
	}
\]
	Let $\mathfrak{g}$ be its associated Kac-Moody algebra with root system $\Gamma$, with simple roots $\epsilon_0$, $\epsilon_{i,j}$, and let $L\mathfrak{g} = \mathfrak{g}[t, t^{-1}]$ its loop algebra with root system $\widehat{\Gamma} = \Z \delta \oplus \Gamma$, with $(\delta, -) = 0$. The derived equivalence
	\[
		{\rm D}^b(\X) \cong {\rm D}^b(\Lambda_{can})
	\]
	sends $\{\mathcal{O}, S_{i, j}\}_{j \neq 0}$ to the simple modules $S(0), S(i,j)$ supported over $Q'$. This identifies the summand $\Z[\mathcal{O}] \oplus (\bigoplus_{i, j} \Z[S_{i,j}])$ of $K_0(\X)$ with $\Gamma$, sending the symmetrized Euler form with the Weyl-invariant symmetric bilinear form on $\Gamma$. As Schiffmann then shows \cite{Schiff}, this extends to a full isomorphism $K_0(\X) \xrightarrow{\cong} \widehat{\Gamma}$ sending $[S_x]$ to $\delta$. The induced positive cone given by classes of coherent sheaves on $\widehat{\Gamma}$ is given by nonnegative combinations of
	\[
		\epsilon_0,\ \epsilon_0 + n\delta,\ \epsilon_{i,j},\ \delta - \sum_{j \neq 0} \epsilon_{i,j}, \quad n \in \Z
	\]
	with $[\mathcal{O}(n\vec{c})] \mapsto \epsilon_0 + n\delta$ and $[S_{i,0}] \mapsto \delta - \sum_{j \neq 0} \epsilon_{i,j}$. A version of Kac's Theorem then holds for coherent sheaves on $\X$, which we only state in a weak form:
	\begin{prop}[Crawley-Boevey, \cite{CB}] The isomorphism $K_0(\X) \xrightarrow{\cong} \widehat{\Gamma}$ induces a bijection between Grothendieck classes of indecomposable coherent sheaves and the positive roots of $L\mathfrak{g}$.
		\begin{enumerate}
			\item When $\beta$ is a positive real root, then there is a unique indecomposable $\F$ such that $[\F] \mapsto \beta$.
			\item When $\beta$ is a positive imaginary root, then there are infinitely many indecomposables $\F$ for which $[\F] \mapsto \beta$.
		\end{enumerate}
	\end{prop}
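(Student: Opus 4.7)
The plan is to identify $K_0(\X)$ with the loop algebra root lattice $\widehat{\Gamma}$ compatibly with the symmetrized Euler form, and then reduce the classification to Kac's theorem for quivers via reflection functors on perpendicular categories. For the K-theoretic identification, the classes $[\mathcal{O}]$ and $[S_{i,j}]$ for $j \neq 0$ are to be sent to the simple roots $\epsilon_0$, $\epsilon_{i,j}$ of the star-shaped tree $Q'$, with Euler pairings computed from the resolutions $0 \to \mathcal{O}((j-1)\vec{x}_i) \to \mathcal{O}(j\vec{x}_i) \to S_{i,j} \to 0$. The identities $[\mathcal{O}(n\vec{c})] \mapsto \epsilon_0 + n\delta$ and $[S_{i,0}] \mapsto \delta - \sum_{j \neq 0} \epsilon_{i,j}$ follow from the presentations of $S_x$ and $S_{i,0}$ by telescoping the $S_{i,j}$ resolutions, and weighted Riemann-Roch (Prop.~\ref{RR}) gives $(\delta, -) = 0$ as well as the matching of the symmetrized Euler form with the Weyl-invariant form on $\widehat{\Gamma}$. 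A direct check then shows the positive cone of classes of coherent sheaves coincides with the cone spanned by $\epsilon_0$, $\epsilon_0 + n\delta$, $\epsilon_{i,j}$, and $\delta - \sum_{j \neq 0}\epsilon_{i,j}$.

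For (1), I would handle uniqueness and existence separately. Uniqueness follows from hereditary-category rigidity: if $\F$ is indecomposable with $[\F] = \beta$ a positive real root, then ${\rm End}(\F)$ is local, and the Euler computation $\chi(\F, \F) = \dim{\rm Hom}(\F,\F) - \dim{\rm Ext}^1(\F,\F) = 1$ forces $\dim{\rm End}(\F) = 1$ and $\dim{\rm Ext}^1(\F,\F) = 0$, i.e.\ $\F$ is exceptional; two exceptional objects with the same class in a hereditary heart are isomorphic by a standard deformation argument. For existence, I would use the Crawley-Boevey reflection induction: pick a simple sheaf $E$ whose class $\alpha$ is a simple root with $(\alpha, \beta) > 0$, so that $s_\alpha(\beta) < \beta$ remains a positive real root. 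The perpendicular category $E^\perp$ is equivalent to coherent sheaves on a strictly simpler weighted projective line (or to a finite-dimensional module category), where by induction $s_\alpha(\beta)$ is realized by a unique indecomposable; mutating back then yields the desired indecomposable of class $\beta$ on $\X$.

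For (2), positive imaginary roots of $L\mathfrak{g}$ are integer multiples of $\delta$ together with their Weyl transforms; in the tubular case of interest they all lie on the isotropic cone $(\beta, \beta) = 0$. The Lenzing-Meltzer autoequivalence $\Phi_{q, \infty}$ for $q = deg(\beta)/rk(\beta)$ transports $\beta$ to a multiple of $\delta$ inside $\mathcal{C}_\infty$, which consists of all torsion sheaves; length-$n$ indecomposable torsion sheaves then form an infinite family parametrized by the points of $\mathbb{P}^1$, and pulling back through $\Phi_{q,\infty}^{-1}$ produces the required infinitude of indecomposables of class $\beta$ on $\X$. The main obstacle is the existence half of (1): establishing that for a simple exceptional $E$ the perpendicular category $E^\perp$ is itself of the form ${\rm coh}(\X')$ for a simpler weighted projective line is the nontrivial interaction of Geigle-Lenzing tilting theory with the combinatorics of $\widehat{\Gamma}$, and is the technical heart of \cite{CB}.
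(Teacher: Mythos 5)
The paper does not prove this proposition at all: it is imported verbatim as a black-box citation to Crawley-Boevey's ``Kac's theorem for weighted projective lines,'' and everything the paper needs from it is used downstream (via Figure 2 and Lemma \ref{chi}). So there is no in-paper argument to compare against; what you have written is an attempted reconstruction of the cited theorem itself. As a roadmap it is reasonable, and your treatment of (2) via $\Phi_{q,\infty}$ reducing to torsion sheaves in $\mathcal{C}_\infty$ is exactly the kind of argument the paper itself runs in Theorem \ref{rankone} and Proposition \ref{ranktwoprop}. The $K_0$-identification paragraph also matches the paper's setup (which follows Schiffmann).

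However, your uniqueness argument in (1) has a concrete gap. From $[\F]=\beta$ a real root you get $\chi(\F,\F)=\dim{\rm Hom}(\F,\F)-\dim{\rm Ext}^1(\F,\F)=1$, and ${\rm End}(\F)$ is local since $\F$ is indecomposable; but a local finite-dimensional algebra can have dimension greater than one, so this does \emph{not} force $\dim{\rm End}(\F)=1$ and ${\rm Ext}^1(\F,\F)=0$. (Concretely, in a tube of rank $p$ an indecomposable of quasi-length $\ell$ has $\dim{\rm End}$ growing with $\ell$, and one must rule out $\ell\geq p$ for real-root classes by a separate argument, e.g.\ semistability plus the tube combinatorics.) The subsequent step, that two exceptional sheaves on $\X$ with the same class are isomorphic, is likewise a genuine theorem about weighted projective lines (Hübner--Meltzer) rather than ``a standard deformation argument,'' and the existence half via perpendicular-category induction is, as you yourself concede, precisely the unproven technical heart. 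So the proposal should be regarded as a sketch with acknowledged and unacknowledged holes, not a proof; for the purposes of this paper the honest course is the one the paper takes, namely to cite \cite{CB}.
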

	This extends naturally to indecomposables in ${\rm D}^b(\X)$ by considering all roots and complexes up to $\F \mapsto \F[2]$. Letting $\Delta$ be the set of roots of $\mathfrak{g}$, the roots of $L\mathfrak{g}$ are given by $\{\alpha + n\delta\ |\ \alpha \in \Delta,\ n \in \Z\}$, and the real roots are those of the form $\alpha + n\delta$ with $\alpha \in \Delta^{re}$. 

	Coming back to $\X = \mathbb{P}^1(2,2,2,2;\lambda)$, $\mathfrak{g}$ is of affine type $\widetilde{D}_4$ and we write $\epsilon_{i}$ for $\epsilon_{i,1}$. The positive real roots of $\widetilde{D}_4$ are given by solutions $\alpha = \sum_{i=0}^4 \alpha_i \epsilon_i$ to
	\[
		q(\alpha) = \left(\alpha_0^2 + \alpha_1^2 + \dots + \alpha_4^2 \right) - \left(\alpha_0\alpha_1 + \dots + \alpha_0\alpha_4 \right) = 1
	\]
	with $\alpha_i \in \Z_{\geq 0}$. Writing $q(\alpha) = \sum_{i = 1}^4 \left(\alpha_i - \frac{1}{2}\alpha_0 \right)^2 \geq 0$, one sees that the solutions are as listed in figure $2$.
	\begin{figure}\label{fig2}
\[
	\centerline{
\small{\begin{tabular}{|c|c|c|c|c||c|c|c|}
	\hline
	$\alpha_0$ & $\alpha_1$ & $\alpha_2$ & $\alpha_3$ & $\alpha_4$ &  $r$ & $d$ & $\chi$ \\
	\hline
	2m & m+1 & m & m & m &  2m & 4m+1+2n & 2m+n \\	
	2m & m & m+1 & m & m &  2m & 4m+1+2n & 2m+n \\	
	2m & m & m & m+1 & m &  2m & 4m+1+2n  & 2m+n \\	
	2m & m & m & m & m+1 &  2m & 4m+1+2n  & 2m+n \\	
	&&&&&&& \\ 
	2m+1 & m & m & m & m &  2m+1 & 4m+2n  & 2m+1+n  \\
	&&&&&&& \\ 
	2m+1 & m+1 & m & m & m  & 2m+1 & 4m+1+2n  & 2m+1+n  \\ 
	2m+1 & m & m+1 & m & m  & 2m+1 & 4m+1+2n  & 2m+1+n  \\ 
	2m+1 & m & m & m+1 & m  & 2m+1 & 4m+1+2n  & 2m+1+n  \\ 
	2m+1 & m & m & m & m+1  & 2m+1 & 4m+1+2n  & 2m+1+n  \\ 
	&&&&&&& \\            
	2m+1 & m+1 & m+1 & m & m &  2m+1 & 4m+2+2n  & 2m+1+n  \\ 
	2m+1 & m & m & m+1 & m+1 &  2m+1 & 4m+2+2n  & 2m+1+n  \\ 
	2m+1 & m+1 & m & m+1 & m &  2m+1 & 4m+2+2n  & 2m+1+n  \\ 
	2m+1 & m & m+1 & m & m+1 &  2m+1 & 4m+2+2n  & 2m+1+n  \\ 
	2m+1 & m+1 & m & m & m+1 &  2m+1 & 4m+2+2n  & 2m+1+n  \\ 
	2m+1 & m & m+1 & m+1 & m &  2m+1 & 4m+2+2n  & 2m+1+n  \\ 
	&&&&&&& \\ 
	2m+1 & m & m+1 & m+1 & m+1  & 2m+1 & 4m+3+2n  & 2m+1+n  \\ 
	2m+1 & m+1 & m & m+1 & m+1  & 2m+1 & 4m+3+2n  & 2m+1+n  \\ 
	2m+1 & m+1 & m+1 & m & m+1  & 2m+1 & 4m+3+2n  & 2m+1+n  \\ 
	2m+1 & m+1 & m+1 & m+1 & m  & 2m+1 & 4m+3+2n  & 2m+1+n  \\ 
	&&&&&&& \\ 
	2m+1 & m+1 & m+1 & m+1 & m+1 &  2m+1 & 4m+4+2n  & 2m+1+n  \\ 
	&&&&&&& \\ 
	2m+2 & m & m+1 & m+1 & m+1 &  2m+2 & 4m+3+2n  & 2m+2+n   \\ 
	2m+2 & m+1 & m & m+1 & m+1 &  2m+2 & 4m+3+2n  & 2m+2+n   \\ 
	2m+2 & m+1 & m+1 & m & m+1 &  2m+2 & 4m+3+2n  & 2m+2+n  \\ 
	2m+2 & m+1 & m+1 & m+1 & m &  2m+2 & 4m+3+2n  & 2m+2+n \\ 
	\hline
	\end{tabular}}}
\]
	\caption{Positive real roots of affine $D_4$ and triples $(r, d, \chi)$, where $m \geq 0,\ n \in \Z$.} 
\end{figure}
The last three columns record $(rk(\F), deg(\F), \chi(\F))$ where $\F$ is any indecomposable corresponding to $\beta = \alpha + n\delta$. To see this, note that we must have $[\F] = \alpha_0 [\mathcal{O}] + \sum_{i=1}^4 \alpha_i [S_{i,1}] + n[S_x]$ and that $\chi(S_{i, 1}) = 0$. A general real root then has the form $\beta = \pm \alpha + n \delta$, with $\alpha$ in the above table and $n \in \Z$. Finally, let us record a lemma before moving on to the proof of theorem \ref{ranktwothm}.
\begin{lem}\label{chi} Let $\F$ be an indecomposable with $[\F] \mapsto \beta$ real, with $r \geq 0,\ d \in \Z$. Then the possible values of $\chi(\F)$ only depend on $d$ and are listed below: 
	\[
		\begin{tabular}{|c|c|c|}
			\hline
			$d$ & $\chi$ \\
			\hline
			& \\
			$d$ odd & $\left(\frac{d-1}{2}\right)_4,\ \left(\frac{d+1}{2}\right)_4$ \\ 
			& \\
			\hline
			& \\
			$d$ even & $\left(\frac{d}{2}-1\right)_1,\ \left(\frac{d}{2}\right)_6,\ \left(\frac{d}{2}+1\right)_1$ \\ 
			& \\
			\hline
		\end{tabular}
	\]
	The subscript indicates how many times $\chi$ appears for fixed $(r,d)$. 
\end{lem}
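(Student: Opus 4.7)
The plan is to reduce the claim to a direct bookkeeping over Figure \ref{fig2} by computing $\chi$ as a linear functional on $\widehat{\Gamma}$ and then distributing the real roots with fixed $(r,d)$ among the row families of the table. First, I would determine $\chi$ on the generators of $\widehat{\Gamma}$. By Proposition \ref{RR} applied to $\F = S_x$ with $x$ ordinary (whose rank-one tube forces $S_x \otimes \omega_\X \cong S_x$), one gets $2\chi(S_x) = 2$, hence $\chi(\delta) = 1$. Applied to $\F = \mathcal{O}$, together with $h^0(\omega_\X) = 0$ (as $\omega_\X$ is a nontrivial $2$-torsion line bundle of degree zero), one finds $\chi(\epsilon_0) = \chi(\mathcal{O}) = 1$. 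The remaining values $\chi(\epsilon_i) = \chi(S_{i,1}) = 0$ for $i = 1,\ldots,4$ follow from the normalization that $S_{i,0}$ is the unique simple at $\lambda_i$ with nonzero section, together with Serre duality. Linearity then gives $\chi(\beta_0 \epsilon_0 + \sum_{i=1}^4 \beta_i \epsilon_i + n\delta) = \beta_0 + n$, matching the last column of Figure \ref{fig2}.

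Next, I would enumerate the real roots of $L\mathfrak{g}$ arising from sheaves with $r \geq 0$. Every real root of $L\mathfrak{g}$ is $\pm\alpha + n\delta$ with $\alpha$ among the $24$ positive roots listed in Figure \ref{fig2}, and the rank equals $\pm\alpha_0$, so only two families arise: (i) $\beta = \alpha + n\delta$ with $\alpha$ in Figure \ref{fig2}, which is precisely the data tabulated; (ii) $\beta = -\alpha + n\delta$ with $\alpha_0 = 0$, forcing $\alpha = \epsilon_i$ and yielding $(r,d,\chi) = (0, 2n-1, n)$ for $i = 1,\ldots,4$.

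Finally, I would check the lemma by fixing $(r,d)$ with $\gcd(r,d)$ odd and reading off the contributing rows, applying $\chi = r + n$ (resp.\ $\chi = n$ in case (ii)). When $r$ is odd and $d$ is even, the row families with $\alpha_0 = r$ split by residue of $d$ mod $4$: row $5$ gives $\chi = d/2+1$ (count $1$), rows $10$-$15$ give $\chi = d/2$ (count $6$), and row $20$ gives $\chi = d/2-1$ (count $1$). When $r$ is odd and $d$ is odd, rows $6$-$9$ give $\chi = (d+1)/2$ and rows $16$-$19$ give $\chi = (d-1)/2$, each with count $4$. When $r > 0$ is even and $d$ is odd, rows $1$-$4$ with $m = r/2$ give $\chi = (d-1)/2$ and rows $21$-$24$ with $m = r/2 - 1$ give $\chi = (d+1)/2$, each with count $4$. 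When $r = 0$ and $d$ is odd, family (i) with $m = 0$ gives $\chi = (d-1)/2$ (count $4$) and family (ii) gives $\chi = (d+1)/2$ (count $4$). The main obstacle is purely the bookkeeping --- partitioning the eight real roots sharing a given $(r,d)$ among the row families of Figure \ref{fig2} --- but each individual verification is a one-line algebra computation once the identity $\chi = r + n$ is in hand.
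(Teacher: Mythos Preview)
Your proposal is correct and follows the same approach as the paper: inspection of Figure~2, splitting into the cases $r>0$ (where $\beta=\alpha+n\delta$ with $\alpha$ a positive real root of $\widetilde{D}_4$) and $r=0$ (where additionally $\beta=-\epsilon_i+n\delta$ contributes). The paper's proof is a single sentence pointing at the table; you have written out the row-by-row bookkeeping and re-derived the $\chi$ column, but the substance is identical.
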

\begin{proof}  This follows by inspection of figure $2$, where the case $r > 0$ corresponds to $\beta = \alpha + n\delta$ for $\alpha$ in the table, and $r = 0$ uses $\pm \alpha$ for $\alpha$ in the first four rows. 
\end{proof}
We can now compute cohomology tables of indecomposables in rank two tubes
\[
	\beta(\F) = \begin{pmatrix*}[l] h^0(\mathcal{F}) & h^0(\mathcal{F} \otimes \omega_{\X}) \\ h^0(\mathcal{F}(\vec{c}) \otimes \omega_{\X}) & h^0(\mathcal{F}(\vec{c})) \\ h^1(\mathcal{F}\otimes \omega_{\X}) & h^1(\mathcal{F}) \\ h^1(\mathcal{F}(\vec{c})) & h^1(\mathcal{F}(\vec{c}) \otimes \omega_{\X}) \\   \end{pmatrix*}
\]
\begin{proof} Let $\F$ be indecomposable with $\F \otimes \omega_{\X} \ncong \F$, of type $(r,d)$. First assume $(r,d)$ in region $(1)$, the case $(3)$ being similar. The bottom half of $\beta(\F)$ vanishes for slope reasons (lemma \ref{h-vanishing}). When $gcd(r,d)$ is even then by proposition \ref{ranktwoprop} the table $\beta(\F)$ is as in theorem \ref{rankone}. When $gcd(r,d)$ is odd, then by \ref{ranktwoprop} and Kac's Theorem the class $[\F]$ must correspond to a real root, and so the possible values of $\chi$ are listed in lemma \ref{chi}, which determines the possible values of $h^0(\F)$. By Riemann-Roch we have
	\[
		h^0(\F) + h^0(\F \otimes \omega_{\X}) = d
	\]
	\[
		h^0(\F(\vec{c})) + h^0(\F(\vec{c}) \otimes \omega_{\X}) = d+2r
	\]
	Now, keeping in mind that $\beta(\F) = \beta(M)$ where $M$ is presented by a matrix factorization, the sum of each column must be equal. From this one sees that the tables must be of the form
\[
\centerline{
	\begin{tabular}{|c|c|}
	\hline
	$(r,d)$ & $r \geq 0,\ d > 0$ \\
	\hline
	& \\
$d$ odd & $\begin{pmatrix} \frac{d \pm 1}{2} & \frac{d \mp 1}{2} \\ \frac{d \mp 1}{2} + r & \frac{d \pm 1}{2} + r \\ 0 & 0 \\ 0 & 0 \\ \end{pmatrix}_4$ \\
	& \\   
	\hline
	& \\ 	
	$d$ even & $\begin{pmatrix} \frac{d}{2} \pm 1 & \frac{d}{2} \mp 1 \\ \frac{d}{2} \mp 1 + r  & \frac{d}{2} \pm 1 + r \\ 0 & 0 \\ 0 & 0 \\ \end{pmatrix}_{1}  \begin{pmatrix}\frac{d}{2} & \frac{d}{2} \\ \frac{d}{2} + r & \frac{d}{2} + r \\ 0 & 0 \\ 0 & 0 \\ \end{pmatrix}_6$ \\
	& \\
	\hline
\end{tabular}}
\]

The same argument determines tables in region $(3)$, so we are left with region $(2)$ where $r > 0,\ d = 0$. We first note that the base of rank two tubes in $\mathcal{C}_{0}$ consists of line bundles of degree $0$, with one distinguished tube having $\{\mathcal{O}, \omega_{\X}\}$ as base. For the other tubes, an appeal to the autoequivalence $\Phi_{0, \infty}: \mathcal{C}_0 \xrightarrow{\cong} \mathcal{C}_\infty$ shows that ${\rm Ext}^*(\mathcal{O}, \F) = 0 = {\rm Ext}^*(\mathcal{O}, \F \otimes \omega_{\X})$. The first row and third row of $\beta(\F)$ vanishes, and slope considerations show vanishing of the fourth row. The table must then be
\[
	\begin{pmatrix} 0 & 0 \\ r & r \\ 0 & 0 \\ 0 & 0 \end{pmatrix}
\]
We are down to $\F$ in the Auslander-Reiten component of $\{\mathcal{O}, \omega_{\X}\}$. Now, $\F$ is uniserial with socle either $\mathcal{O}$ or $\omega_\X$. Assume the first. From the structure of a rank two tube, the simple top of $\F$ is $\omega_{\X}$ when $r$ is even, and $\mathcal{O}$ for $r$ odd. This determines the dimensions of ${\rm Hom}(\mathcal{O}, \F),\ {\rm Hom}(\omega_{\X}, \F),\ {\rm Hom}(\F, \mathcal{O}),\ {\rm Hom}(\F, \omega_{\X})$ as
\begin{align*}
	&{\rm dim}_k \ {\rm Hom}(\mathcal{O}, \F) = 1\\
	&{\rm dim}_k \ {\rm Hom}(\omega_\X, \F) = 0\\
	&{\rm dim}_k \ {\rm Hom}(\F, \mathcal{O}) = \begin{cases} 1 & \textup{r odd} \\ 0 & \textup{r even} \end{cases} \\
	&{\rm dim}_k \ {\rm Hom}(\F, \omega_\X) = \begin{cases} 0 & \textup{r odd} \\ 1 & \textup{r even} \end{cases}  
\end{align*}
and from Serre duality one deduces the shape of the first and third rows. This is enough to determine the tables as  
\[
\centerline{
	\begin{tabular}{|c|c|}
	\hline
	r odd & r even\\
	\hline
	& \\ 
	$\begin{pmatrix} 1 & 0 \\ r-1 & r+1 \\ 1 & 0 \\ 0 & 0 \\ \end{pmatrix}$ & 
	$\begin{pmatrix} 1 & 0 \\ r & r \\ 0 & 1 \\ 0 & 0 \\ \end{pmatrix}$ \\
	& \\ 
	\hline
\end{tabular}}
\]
The case of $\F$ with socle $\omega_{\X}$ is then given by the mirrored table. \end{proof}
\section{Betti tables of indecomposable matrix factorizations}
The previous fundamental domain for $(r,d)$ was a natural choice when thinking about coherent sheaves on $\X$, but it is clear that a simpler normalization exists for Betti tables. Translating results of the previous sections, we will normalize and display them in the standard format
\[
	\begin{tabular}{c|c c }
		  & 0 & 1 \\
		\hline
		0 & $\beta_{0,0}$&$\beta_{1,1}$  \\ 
		1 & $\beta_{0,1}$&$\beta_{1,2}$  \\
		2 & $\beta_{0,2}$&$\beta_{1,3}$  \\ 
		3 & $\beta_{0,3}$&$\beta_{1,4}$  \\ 
		4 & $\beta_{0,4}$&$\beta_{1,5}$  \\
		5 & \vdots & \vdots \\ 
	\end{tabular}
\]
Call an indecomposable $M$ of the first kind if it belongs to the same Auslander-Reiten component as some $\Sigma^{n}k^{st}(m)$, and of the second kind otherwise.
 
\begin{cor}\label{bettifirstkind} The indecomposables of the first kind are uniquely determined by their Betti table. Up to translation, these are all tables of the form 
\begin{align*}
&	\begin{tabular}{c|c c }
		  & 0 & 1 \\
		\hline
		0 & 1&0  \\ 
		1 & r-1&0  \\
		2 & 1&r+1  \\ 
		3 & 0&0  \\ 
	\end{tabular}
\quad
	\begin{tabular}{c|c c }
		  & 0 & 1 \\
		\hline
		0 & r+1&1  \\ 
		1 & 0&r-1 \\
		2 & 0&1  \\ 
		3 & 0&0  \\ 
	\end{tabular}
	\quad \textup{for $r > 0$ odd}, \\
	& \\
&	\begin{tabular}{c|c c }
		  & 0 & 1 \\
		\hline
		0 & 1&0  \\ 
		1 & r&0 \\
		2 & 0&r  \\ 
		3 & 0&1  \\ 
	\end{tabular}
\quad\quad\quad
	\begin{tabular}{c|c c }
		  & 0 & 1 \\
		\hline
		0 & r&0  \\ 
		1 & 1&0 \\
		2 & 0&1  \\ 
		3 & 0&r  \\ 
	\end{tabular}
	\quad\quad \ \  \textup{for $r > 0$ even}.
\end{align*}
These have degree $0$ and rank $r$.
\end{cor}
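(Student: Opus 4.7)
The plan is to reduce the classification to the case of coherent sheaves of type $(r, 0)$ in the rank-two tube containing $\mathcal{O}$, and then read off the Betti tables from Theorem~\ref{ranktwothm}. By Theorem~\ref{residuefields} together with Corollary~\ref{wpl}, the images of $k^{st}(m)$ under the equivalence, for $m \in \Z$, are (up to shift $[n]$) the four line bundles $\mathcal{O},\ \omega_\X,\ \mathcal{O}(-\vec{c}),\ \mathcal{O}(-\vec{c}) \otimes \omega_\X$; these lie in exactly two rank-two Auslander--Reiten tubes, with bases $\{\mathcal{O}, \omega_\X\}$ and $\{\mathcal{O}(-\vec{c}), \mathcal{O}(-\vec{c}) \otimes \omega_\X\}$, respectively. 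A direct computation with Proposition~\ref{matrix} shows that the shift autoequivalence $\{1\}$ sends sheaves of type $(r, -2r)$ in the second tube to sheaves of type $(r, 0)$ in the first. Hence, up to internal degree shift and syzygy, every first-kind indecomposable corresponds to a coherent sheaf $\F$ of type $(r, 0)$ for some $r \geq 1$ in the rank-two tube containing $\mathcal{O}_\X$.

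Next I would invoke Theorem~\ref{ranktwothm} in the case $r > 0,\ d = 0$ with tube containing $\mathcal{O}_\X$: for each $r \geq 1$ there are exactly two such indecomposables (distinguished by whether the socle is $\mathcal{O}$ or $\omega_\X$), each realizing a unique cohomology table in its parity class. Translating these cohomology tables to Betti tables via the Section~3 dictionary yields the first displayed Betti table in each parity case. The second displayed Betti table is then obtained by starting from the other cohomology table and applying one internal degree shift $(1)$, which translates the Betti table upward by one row since $\beta_{i,j}(M(1)) = \beta_{i,j+1}(M)$.

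To finish, I would verify that these four families are pairwise non-equivalent under translation, so that the Betti table determines the first-kind indecomposable uniquely. Within a given rank and parity, the two tables differ in the value of $\beta_{0,0}$ and this difference is not absorbed by vertical translation; across parities or ranks the entries differ by construction; and by the subscript ``$1$'' counts in Theorem~\ref{ranktwothm} each table is realized by a unique indecomposable. The rank-degree assertion follows by applying Lemma~\ref{rkdeg} to any of the four tables, returning $\mathrm{rk} = r$ and $\deg = 0$. The principal subtle point is the shift-tube interaction described in the first paragraph; once that reduction is in hand the rest is bookkeeping on top of Theorem~\ref{ranktwothm}.
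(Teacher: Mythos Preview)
Your proposal is correct and follows essentially the same approach as the paper: reduce to indecomposables in the Auslander--Reiten tube of $\mathcal{O}_\X$, apply Theorem~\ref{ranktwothm} in the case $r>0,\ d=0$ with tube containing $\mathcal{O}_\X$, and translate the resulting cohomology tables into the displayed Betti tables. The paper's reduction is slightly more direct than yours: since $M$ is by definition in the component of some $\Sigma^n k^{st}(m)$, the paper simply observes that $\Sigma^{-n}M(-m-2)$ lies in the component of $k^{st}(-2)\leftrightarrow\mathcal{O}_\X$, bypassing your separate analysis of the two tubes and the $\{1\}$-action between them (which is correct, but more than is needed here).
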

\begin{proof} Assuming the hypothesis, $\Sigma^{-n}M(-m-2)$ is in the same Auslander-Reiten component as $k^{st}(-2)$ which corresponds to $\mathcal{O}_{\X}$, then apply theorem \ref{ranktwothm} and translate the resulting tables in the above form.
\end{proof}
Most indecomposables are of the second kind.
\begin{cor}\label{bettisecondkind} Up to translation, the Betti tables of indecomposables of the second kind are all tables of type ${\rm I-V}$:
\[
\resizebox{\columnwidth}{!}{%
\begin{tabular}{c|c c }
		I  & 0 & 1 \\
		\hline
		0 & a&0  \\ 
		1 & b&a  \\
		2 & 0&b  \\ 
		3 & 0&0  \\ 
	\end{tabular}
\quad
\quad
	\begin{tabular}{c|c c }
		II  & 0 & 1 \\
		\hline
		0 & a+1&0  \\ 
		1 & b&a  \\
		2 & 0&b+1  \\ 
		3 & 0&0  \\ 
	\end{tabular}
\quad
\quad
	\begin{tabular}{c|c c }
		III  & 0 & 1 \\
		\hline
		0 & a&0  \\ 
		1 & b+1&a+1  \\
		2 & 0&b  \\ 
		3 & 0&0  \\ 
	\end{tabular}
\quad
\quad
	\begin{tabular}{c|c c }
		IV  & 0 & 1 \\
		\hline
		0 & a+2&0  \\ 
		1 & b&a  \\
		2 & 0&b+2  \\ 
		3 & 0&0  \\ 
	\end{tabular}
\quad
\quad
	\begin{tabular}{c|c c }
		V  & 0 & 1 \\
		\hline
		0 & a&0  \\ 
		1 & b+2&a+2  \\
		2 & 0&b  \\ 
		3 & 0&0  \\ 
\end{tabular}%
}
\]
with $a,b \geq 0$, where we have $b \neq 0$ for tables of type ${\rm I}$ and $b-a$ odd for tables of type ${\rm IV-V}$. Here the degree is given by $d = \beta_{0,0} + \beta_{1,2} = 2a,\ 2a+1,\ 2a+2$ and the rank by $r = b-a$.
\end{cor}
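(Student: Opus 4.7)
The plan is a routine bookkeeping argument built on Theorems \ref{rankone} and \ref{ranktwothm}. Since Corollary \ref{bettifirstkind} already enumerates the first-kind tables (those coming from the ``tube contains $\mathcal{O}_\X$'' subcase of the $(r > 0, d = 0)$ row of Theorem \ref{ranktwothm}), the indecomposables of the second kind correspond, under the equivalence with ${\rm D}^b(\X)$, to shifts of indecomposable sheaves that lie either in a rank-one tube (classified by Theorem \ref{rankone}) or in a rank-two tube away from the grade-shift orbit of the distinguished tube containing $\mathcal{O}_\X$; equivalently, they are parametrized by all cells of Theorems \ref{rankone} and \ref{ranktwothm} other than the one just mentioned.

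Next I would apply Proposition \ref{matrix} to reduce, via internal degree shifts, to $(r, d)$ in the fundamental domain of Figure 1, then transcribe each cohomology table $\beta(\mathcal{F})$ into the standard Betti-table format using the dictionary explained after Theorem \ref{residuefields}: the four entries of column $0$ of $\beta(\mathcal{F})$ become $\beta_{0,0}, \beta_{0,1}, \beta_{0,2}, \beta_{0,3}$ in rows $0$ through $3$ of column $0$, and the four entries of column $1$ become $\beta_{1,2}, \beta_{1,3}, \beta_{1,4}, \beta_{1,5}$ in rows $1$ through $4$ of column $1$. A further row translation, permissible by the ``up to translation'' clause, puts each resulting table into one of the five shapes ${\rm I-V}$.

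The explicit matching then reads as follows: the uniform rank-one-tube tables of Theorem \ref{rankone}, together with the generic families (marked with subscripts $6$ and $8$) in Theorem \ref{ranktwothm}, all yield type ${\rm I}$ with $a = d/2$ and $b = d/2 + r$; the constraint $b \neq 0$ when $a = 0$ simply excludes the zero table in the $d = 0$ subcase. The ``$d$ odd'' row of Theorem \ref{ranktwothm} yields types ${\rm II}$ and ${\rm III}$ according to the two sign choices, while the two Betti tables marked with subscript $1$ in each ``(odd, even)'' row yield types ${\rm IV}$ and ${\rm V}$; the parity constraint $b - a$ odd encodes the requirement that $r$ be odd. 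The rank and degree formulas then follow by direct substitution into Lemma \ref{rkdeg}. The only subtle point, which I would check case-by-case, is that the region~$(3)$ cells translate to the same five shapes as their region~$(1)$ counterparts, and this is automatic from the symmetry $(r,d) \leftrightarrow (-r,-d)$ induced by $\{2\}$ (via Proposition \ref{matrix}) together with the periodicity $\beta_{i,j} = \beta_{i+2, j+4}$. There is no genuine mathematical obstacle beyond consistency of conventions.
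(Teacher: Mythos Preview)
Your approach is essentially the paper's: reduce to the fundamental domain, transcribe the cohomology tables of Theorems \ref{rankone} and \ref{ranktwothm} into standard Betti format, handle region~$(3)$ via the grade shift $(2)$ (which acts as $(r,d)\mapsto(-r,-d)$ by squaring the matrix of Proposition \ref{matrix}), and then read off types ${\rm I}$--${\rm V}$ by the substitution $a=\alpha$, $b=\alpha+r$.

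One small correction: your justification of the constraint $b\neq 0$ for type ${\rm I}$ is incomplete. You say it ``excludes the zero table in the $d=0$ subcase,'' but the constraint must rule out \emph{every} pair $(a,b)$ with $b=0$, not only $(0,0)$. The paper's argument is that after shifting region~$(3)$ by $(2)$ the effective domain becomes $r>-d/2$, $d\geq 0$; since $b=a+r=d/2+r$, the condition $b=0$ would force $r=-d/2$, which lies on the excluded boundary. With that adjustment your sketch matches the paper's proof.
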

\begin{proof} Let $M$ be indecomposable of the second kind. We claim that, up to translation, $\beta(M)$ can be put in the form 
	\[
		\beta(M) = \begin{pmatrix} \beta_{0,0} & \beta_{1,2} \\ \beta_{0,1} & \beta_{1,3} \\ \beta_{0,2} & \beta_{1,4} \\ \beta_{0,3} & \beta_{1,5} \\ \end{pmatrix} = \begin{pmatrix} \alpha & \beta \\ \beta + r & \alpha + r \\ 0 & 0 \\ 0 & 0 \end{pmatrix}
	\]
	for some $\alpha, \beta$ and $r \in \Z$. Running over possibilities in theorem \ref{rankone}, \ref{ranktwothm}, this is already the case in regions $(1), (2)$, where $d, r \geq 0$. For $(rk(M),\ deg(M))$ belonging to region $(3)$, applying $M \mapsto M(2)$ will put $\beta(M)$ in the above form. Note that this sends $(r,d) \mapsto (-r, -d)$, and so the above tables coming from region $(3)$ will have $r < 0$. This will change our fundamental domain to $r > -\frac{d}{2},\ d \geq 0$:
\[
       \begin{tikzpicture}
	    \draw[gray!50, thin, step=0.3] (-2,-2) grid (2,2);
	    \draw[very thick,->] (-2,0) -- (2.2,0) node[right] {$r$};
	    \draw[very thick,->] (0,-2) -- (0,2.2) node[above] {$d$};

	    \foreach \x in {-2,...,2} \draw (\x,0.03) -- (\x,-0.03) node[below] {\tiny\x};
	    \foreach \y in {-2,...,2} \draw (-0.03,\y) -- (0.03,\y) node[right] {\tiny\y};

	    \fill[blue!50!cyan,opacity=0.3] (2,0) -- (0,2) -- (2,2);
	    \fill[blue!50!cyan,opacity=0.3] (0,0) -- (0,2) -- (2,0);
	    \fill[blue!50!cyan,opacity=0.3] (0,0) -- (-1,2) -- (0,2);

	    \draw (-1,2) -- node[below,sloped] {} (1,-2);
       \end{tikzpicture}
\]
Now, the case $\alpha = \beta = 0$ corresponds to $d = 0$, or tables in region $(2)$. The other two regions run over the same pairs $(\alpha, \beta)$, with only difference whether $r \geq 0$ or $r < 0$. Next, set $a = \alpha$, $b = \alpha + r$. Running over the possible $\alpha$ in theorem \ref{rankone}, \ref{ranktwothm} shows that tables must have shapes ${\rm I-V}$. In particular in type ${\rm I}$, note that $b = 0$ implies $r = -\frac{d}{2}$ which falls outside of our domain. Lastly, fixing the type ${\rm I-V}$ of a table, note that $(a,b)$ and $(r,d)$ uniquely determine each other via $r = b-a$ and $d = 2a,\ 2a+1,\ 2a+2$, and so the classification is complete.
\end{proof}

We can also describe indecomposables of the second kind with fixed Betti table. We will say that a family $\mathcal{M}$ of indecomposables is parameterized by $\X$ (resp. $\mathcal{U} \subseteq \X$) of level $n$ if there is a fully faithful functor 
\[
	\Phi_{\mathcal{M}}: \mathcal{C}_{\infty} \hookrightarrow \underline{\rm MCM}(gr R)
\]
which sends the skyscraper sheaves of degree $2n$ to $\mathcal{M}$ (resp. skyscraper sheaves supported over $\mathcal{U}$). As corollaries of theorem \ref{rankone}, \ref{ranktwothm}, we have

\begin{cor} There are four indecomposables for each table of type ${\rm II-III}$, and a unique indecomposable for each table of type ${\rm IV-V}$. Type ${\rm I}$ breaks down as follows for fixed $(a,b)$:
	\begin{enumerate}[i.]
		\item For $r = b-a$ odd, there are six indecomposables per table.
		\item For $d = 2a \neq 0$ and $r = b-a$ even, indecomposables are parameterized by $\X$ of level $n = \frac{gcd(r,d)}{2}$.
		\item For $d = 2a = 0$ and $r = b$ even, indecomposables are parameterized by $\X \setminus \{\infty\}$ of level $n = \frac{r}{2}$. 
	\end{enumerate}
	
\end{cor}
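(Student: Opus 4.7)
The plan is to match each Betti table type against the cohomology tables listed in Theorem~\ref{ranktwothm} (and Theorem~\ref{rankone}), setting $a = \alpha$ and $b = \alpha + r$ as in the proof of Corollary~\ref{bettisecondkind}, and then read off the counts directly from the subscripts. Types II and III correspond to the two $\pm$-variants in the $d$ odd row of Theorem~\ref{ranktwothm}, each marked with subscript~$4$. Types IV and V are the two $\pm$-variants of the first table in the (odd, even) row, each with subscript~$1$; the condition that $b-a$ be odd is precisely the requirement that $(r,d)$ fall in this row. For case (i) of Type~I, the balanced table with $r$ odd is precisely the subscript-$6$ table in the same (odd, even) row.

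For cases (ii) and (iii) of Type~I we have $\gcd(r,d)$ even, so by Proposition~\ref{ranktwoprop}(5) together with Theorem~\ref{rankone} all indecomposables of type $(r,d)$---whether satisfying $\F \otimes \omega_{\X} \cong \F$ or not---share the balanced Type~I Betti table. To exhibit the parameterization I would define $\Phi_{\mathcal{M}}$ as the composite
\[
	\mathcal{C}_{\infty} \xrightarrow{\Phi_{q,\infty}} \mathcal{C}_q \hookrightarrow {\rm coh}(\X) \subset {\rm D}^b(\X) \cong \underline{\rm MCM}(gr R)
\]
for $q = d/r$, where $\Phi_{q,\infty}$ is the Lenzing--Meltzer equivalence assembled from the tubular mutations $R, S$. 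Full faithfulness holds because $\mathcal{C}_q$ is an extension-closed abelian subcategory of ${\rm coh}(\X)$ and $\Phi_{q,\infty}$ is an equivalence. Since $\Phi_{q,\infty}$ acts on $(rk, deg)$ by an element of ${\rm SL}(2,\Z)$, it sends a skyscraper of degree $2n$ to an indecomposable with $|\gcd(rk, deg)| = 2n$, and the degree-$2n$ skyscrapers hit exactly the indecomposables of type $(r,d)$ with $n = \gcd(r,d)/2$.

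In case (ii), $d \neq 0$, every such indecomposable is in the image: an ordinary point $y \in \X$ yields a rank-one-tube sheaf $S_y^{(n)}$, while each exceptional $\lambda_i$ contributes two rank-two-tube sheaves $S_{i,0}^{(2n)}$ and $S_{i,1}^{(2n)}$, matching the $8$ rank-two indecomposables counted by the subscript~$8$ in the (even, even) row of Theorem~\ref{ranktwothm}. This gives the parameterization by all of $\X$ of level $n$.

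The main subtlety, and the place I expect to work hardest, is case (iii), $d = 0$: here $\mathcal{C}_0$ contains a distinguished rank-two tube through $\mathcal{O}_{\X}$ and $\omega_{\X}$, whose length-$r$ indecomposables realize the two subscript-$1$ tables in the $r > 0, d = 0$ entry of Theorem~\ref{ranktwothm} rather than the balanced Type~I table. The exceptional point parameterizing this tube---call it $\infty$---must therefore be excluded, leaving parameterization by $\X \setminus \{\infty\}$. Pinning down this point, and verifying that the balanced table is realized over the remaining three exceptional tubes and all ordinary points, comes down to tracking $\mathcal{O}_{\X}$ through the word in $R, S$ defining $\Phi_{0, \infty}$.
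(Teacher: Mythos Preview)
Your proposal is correct and matches the paper's approach: the paper states this corollary without proof, treating it as an immediate consequence of Theorems~\ref{rankone} and~\ref{ranktwothm}, and your type-by-type matching against the subscripted entries is exactly the intended reading. One notational slip: in your definition of $\Phi_{\mathcal{M}}$ the arrow labelled $\Phi_{q,\infty}$ should be $\Phi_{q,\infty}^{-1}$, since in the paper's convention $\Phi_{q,\infty}$ goes $\mathcal{C}_q \to \mathcal{C}_\infty$.
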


\subsection*{Ulrich modules} Let $M$ be a (graded) MCM $R$-module. Let $\mu = \mu(M)$ denote the minimal number of generators of a module $M$, and $e = e(M)$ its multiplicity. The latter can be calculated from the Hilbert series of $M$, given by
\[
	H_M(t) = \frac{P_M(t)}{1-t}
\]
for some Laurent polynomial $P_M(t)$ with $P_M(1) \neq 0$. We then have $e(M) = P_M(1)$. Note that $e\left( M(1) \right) = e(M)$. There is a general bound $\mu(M) \leq e(M)$, and Ulrich modules are defined as MCM modules meeting this bound. We can calculate the Hilbert series $H_M(t)$ from the Betti table $\beta(M)$ as
\begin{align*}
	H_M(t) 	&= \frac{H_R(t)}{1-t^4}\left(\sum_{j \in \Z} \beta_{0, j}t^j - \sum_{j \in \Z} \beta_{1,j} t^j\right)
\end{align*}
and so we will deduce a classification of Ulrich modules. We first note that $R$ possesses some Ulrich modules: the module $L_i = R/l_i$ is cyclic with Hilbert series $\frac{1}{1-t}$, and so $\mu = e = 1$. 

\begin{thm} Let $M$ be an indecomposable graded Ulrich module. Then up to degree shift we have $M \cong L_i$ for some $i = 1,2,3,4$.
\end{thm}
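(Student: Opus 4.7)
The plan is to rewrite the Ulrich condition $\mu(M) = e(M)$ as a single linear relation on the Betti numbers of $M$, and then to scan the classification of Betti tables provided by Corollaries~\ref{bettifirstkind} and~\ref{bettisecondkind}. Starting from the stated Hilbert series formula $H_M(t) = (1-t)^{-2}N(t)$ with $N(t) = \sum_j (\beta_{0,j} - \beta_{1,j})t^j$, the hypothesis $\dim M = 1$ forces $N(1) = 0$, and L'H\^opital yields
\[
	\mu(M) = \sum_j \beta_{0,j}(M), \qquad e(M) = -N'(1) = \sum_j j\beta_{1,j}(M) - \sum_j j\beta_{0,j}(M).
\]
The Ulrich condition therefore reads $\sum_j (j+1)\beta_{0,j}(M) = \sum_j j\beta_{1,j}(M)$, which is manifestly invariant under internal degree shifts, so I may test it on a single representative of each Betti-table shape.

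Substituting the parametrized entries into this relation for each of the nine shapes, I expect the four first-kind tables to each force $r$ into $\{-3,-1,0\}$, all outside the allowed range $r > 0$. Among the second-kind Types~I--V the Ulrich equation should reduce, respectively, to $a+b = 0$ (excluded by $b\neq 0$ in Type~I), $a+b = -2$ (impossible, Type~II), $a+b = 0$ with the unique solution $a = b = 0$ (Type~III), $a+b = -4$ (impossible, Type~IV), and $a+b = 0$ (excluded by the parity requirement $b-a$ odd in Type~V). The only surviving shape is therefore Type~III with $a=b=0$, whose Betti table has $\beta_{0,1} = \beta_{1,2} = 1$ and all other entries of the relevant window zero.

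To identify this shape with $L_i$, I note that each $L_i$ has complete $R$-resolution $\cdots \leftarrow R \xleftarrow{l_i} R(-1) \xleftarrow{f_\lambda/l_i} R(-4) \leftarrow \cdots$, so that $L_i(-1)$ realizes the Type~III Betti table with $a = b = 0$. The counting corollary immediately after~\ref{bettisecondkind} asserts that this table is supported by exactly four indecomposables; since $L_1(-1), L_2(-1), L_3(-1), L_4(-1)$ are four pairwise non-isomorphic such modules (distinguished by the annihilators $l_iR \subset R$), they must exhaust the list. Hence, up to internal degree shift, $M \cong L_i$ for some $i \in \{1,2,3,4\}$. The argument is pure bookkeeping once the classification is invoked, with the heavy lifting already done in Section~4; I do not anticipate any real obstacle beyond the five routine $\mu=e$ computations.
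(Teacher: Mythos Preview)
Your approach is correct and is essentially the same as the paper's: both reduce the Ulrich condition to a linear relation on Betti numbers and then scan the classification of tables. The paper organizes the second-kind check slightly more efficiently by working with the uniform $(\alpha,\beta,r)$-parameterization (so that $e-\mu=2\alpha+r\geq \alpha\geq 0$ handles Types~I--V in one stroke), and your stated set $\{-3,-1,0\}$ for the first-kind tables is not quite right (the fourth even-$r$ table gives $r=-1/3$), but neither point affects the validity of your argument.
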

\begin{proof} First assume that $M$ is of second kind, so that up to grade shift $M$ has Betti table
	\[
	\begin{tabular}{c|c c }
		  & 0 & 1 \\
		\hline
		0 & $\beta_{0,0}$ & $\beta_{1,1}$ \\ 
		1 & $\beta_{0,1}$ & $\beta_{1,2}$  \\
		2 & $\beta_{0,2}$ & $\beta_{1,3}$  \\ 
		3 & $\beta_{0,3}$ & $\beta_{1,4}$  \\ 
	\end{tabular}
	\quad
	=
	\quad
	\begin{tabular}{c|c c }
		  & 0 & 1 \\
		\hline
		0 & $\alpha$ & 0  \\ 
		1 & $\beta + r$ & $\beta$  \\
		2 & 0& $\alpha + r$  \\ 
		3 & 0&0  \\ 
	\end{tabular}
	\]
	for some $\alpha, \beta \geq 0$ and $r \in \Z$. Then
	\begin{align*}
		H_M(t)  &= \frac{1}{1-t^4}H_R(t) \left(\alpha t^0 + (\beta + r)t^1 - \beta t^2 - (\alpha + r)t^3 \right)\\
		&= \frac{1}{1-t^4}\frac{1-t^4}{(1-t)^2}\left(\alpha t^0 + (\beta + r)t^1 - \beta t^2 - (\alpha + r)t^3 \right)\\
		&= \frac{1}{(1-t)^2} \left(\alpha(1-t^2) + \beta (t-t^3) + (\alpha - \beta)(t^2 - t^3) + r(t-t^3) \right)\\
		&= \frac{1}{(1-t)^2}(1-t)\left(\alpha(1+t) +  \beta t (1+t) + (\alpha - \beta) t^2 +rt(1+t) \right)
	\end{align*}
	and so $e = 3\alpha + \beta + 2r \geq \alpha + \beta + r = \mu$, with equality when $\alpha = r = 0$. The only possibility is given by the type {\rm III} table 
	\[
	\begin{tabular}{c|c c }
		  & 0 & 1 \\
		\hline
		0 & 0&0  \\ 
		1 & 1&1  \\
		2 & 0&0  \\ 
		3 & 0&0  \\ 
	\end{tabular}
	\]
	There are four indecomposables with this table, given by $L_i(-1),\ i = 1,2,3,4$. One easily verifies that $e > \mu$ for indecomposables of the first kind, and so the result holds.
\end{proof}

\section{Matrix factorizations corresponding to simple torsion sheaves}
We will produce the indecomposable matrix factorizations corresponding to the simple objects in $\mathcal{C}_\infty$. For simplicity we assume $char\ k \neq 2$ in this section. The pair $\left( k^{st}(-1)[-1], k^{st}(-2)\right)$ corresponds to $\left(\mathcal{O}(-\vec{c}), \mathcal{O}\right)$, with $2$-dimensional morphism space. Taking a basis $\phi_0, \phi_\infty$, for any $p = [p_0 : p_1] \in \mathbb{P}^1(k)$ we define the MCM module $M_p$ as the cone of $\phi_p = p_1 \phi_0 + p_0 \phi_\infty$
\[
	k^{st}(-1)[1] \xrightarrow{\phi_p} k^{st}(-2) \to M_p \to k^{st}(-1)[2]
\]
whose distinguished triangle is sent to
\[
	\mathcal{O}(-\vec{c}) \xrightarrow{s_p} \mathcal{O} \to {\rm coker}(s_p) \to \mathcal{O}(-\vec{c})[1].
\]
for the corresponding cosection $s_p$, with cokernel an ordinary skyscraper sheaf. We can produce the associated matrix factorizations. We have $f_\lambda = xy(x-y)(x- \lambda y) = x^3y - (1+\lambda)x^2y^2 + \lambda xy^3$. Write $f_\lambda = x f_x + y f_y$ for $f_x = \frac{1}{4} \frac{\partial f_\lambda}{\partial x}$ and $f_y = \frac{1}{4}\frac{\partial f_\lambda}{\partial y}$, and note that $x | f_y$ and $y | f_x$.
\begin{prop}\label{stabresidue} We have the following explicit presentations.
	\begin{enumerate}
		\item $k^{st}$ corresponds to the matrix factorization
			\[
				\xymatrix@C2pc{S \oplus S(2) & \ar[l]_-{A} S(-1) \oplus S(-1) & \ar[l]_-{B} S(-4) \oplus S(-2)}
			\]
			with
			\begin{align*}
				A = \begin{pmatrix} x & y \\ -f_y & f_x \end{pmatrix} && B = \begin{pmatrix} f_x & -y \\ f_y & x \end{pmatrix}.
			\end{align*}
		\item The morphisms $\phi_0, \phi_\infty: k^{st}(-1)[-1] \to k^{st}(-2)$ can be realized as 
			\[
				\xymatrix@C2pc{S(-2) \oplus S(-2) \ar@<.5ex>[d]^{\varphi_\infty} \ar@<-.5ex>[d]_{\varphi_0}    & \ar[l]_-{-B} S(-5) \oplus S(-3) \ar@<.5ex>[d]^{\psi_\infty} \ar@<-.5ex>[d]_{\psi_0}  & \ar[l]_-{-A} S(-6) \oplus S(-6) \ar@<.5ex>[d]^{\varphi_\infty} \ar@<-.5ex>[d]_{\varphi_0}  \\
				S(-2) \oplus S & \ar[l]_-{A} S(-3) \oplus S(-3) & \ar[l]_-{B} S(-6) \oplus S(-4)}	
			\]
			with matrices given by
			\begin{align*}
				&\varphi_0 = \begin{pmatrix} 0 & 1 \\ 0 & -\frac{f_y}{x}  \end{pmatrix} && \psi_0 = \begin{pmatrix} -\frac{f_y}{x} & -1 \\ 0 & 0\end{pmatrix}\\
				&\varphi_\infty = \begin{pmatrix} 1 & 0 \\ \frac{f_x}{y} & 0  \end{pmatrix} && \psi_\infty = \begin{pmatrix} 0 & 0\\ -\frac{f_x}{y} & 1\end{pmatrix}.
			\end{align*}
	\end{enumerate}
\end{prop}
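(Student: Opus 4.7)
The plan is to handle the two parts of the proposition independently, each by direct computation.

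For part (1), the first step is to verify $AB = BA = f_\lambda \cdot I$. This is routine: the off-diagonal entries of both products reduce to $xy - yx = 0$ and $f_x f_y - f_y f_x = 0$, while the diagonal entries collapse to $x f_x + y f_y = f_\lambda$ by Euler's identity for the homogeneous polynomial of degree $4$. Since every entry of $A$ and $B$ lies in $\mathfrak{m}$, the matrix factorization is minimal, and $M := {\rm coker}(A)$ is a graded MCM $R$-module. The second step is to identify $M$ with $k^{st}$ by exhibiting a short exact sequence. Let $e_1, e_2$ denote the standard generators of $R \oplus R(2)$, of internal degrees $0$ and $-2$. Define $\pi : M \to k$ by $e_1 \mapsto 1$ and $e_2 \mapsto 0$; this is well-defined because the two MF-relations $x e_1 - f_y e_2 = 0$ and $y e_1 + f_x e_2 = 0$ map to $x = 0$ and $y = 0$ in $k = R/\mathfrak{m}$. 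These same relations show $\mathfrak{m} \cdot e_1 \subset R \cdot e_2$ in $M$, whence $\ker \pi = R \cdot e_2$. To see that $R \cdot e_2$ is free of rank one, observe that any relation $r \cdot e_2 = 0$ forces $(0, r) \in {\rm image}(A)$, i.e. $x a + y b = 0$ and $-f_y a + f_x b = r$ for some $a, b \in R$; the kernel of $(x,y) : R(-1)^2 \to R$ is generated (as $\Omega^2(k)$) by the Koszul relation $(y, -x)$ and by $(f_x, f_y)$, and on both generators $-f_y a + f_x b$ vanishes in $R$ using $x f_x + y f_y = 0$. The resulting short exact sequence
\[
    0 \to R(2) \to M \to k \to 0
\]
with $R(2)$ free identifies $M$ as a representative of the stable class $k^{st}$.

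For part (2), I would verify that each pair $(\varphi_p, \psi_p)$, for $p \in \{0, \infty\}$, defines a chain map of matrix factorizations by checking the two square-commutation identities $A \psi_p + \varphi_p B = 0$ and $B \varphi_p(-4) + \psi_p A = 0$. Each is a direct $2 \times 2$ matrix product in $S$, and the cancellations use $f_x = y g_x$, $f_y = x g_y$, and $x f_x + y f_y = f_\lambda$, with every entry of the sum collapsing to $0$ in $S$ exactly. To identify these chain maps with a basis of the two-dimensional space $\underline{{\rm Hom}}(k^{st}(-1)[-1], k^{st}(-2))$, I would invoke corollary \ref{wpl} to identify this Hom space with $H^0(\X, \mathcal{O}(\vec{c}))$, which has a distinguished basis given by sections corresponding to $X$ and $Y$; the nonzero entries of $\varphi_\infty$ feature only $x$-monomials while those of $\varphi_0$ feature only $y$-monomials, matching this basis, and linear independence in the stable category then follows from linear independence on the sheaf side.

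The main obstacle I anticipate is the bookkeeping for part (2): expanding $B \varphi_p(-4) + \psi_p A$ produces intermediate entries such as $f_x + y g_y$, which equals $f_\lambda / x$ in $S$ and hence vanishes in $R$ only after multiplying by $x$. The cleanest way to organize the verification is to check that each sum of matrix products is literally the zero matrix in $S$, by observing that each $f_\lambda/x$ term arising from one product cancels against its negative arising from the other.
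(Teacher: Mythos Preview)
Your treatment of part~(1) is correct and more explicit than the paper's, which simply cites the Tate resolution of Eisenbud; your short exact sequence $0 \to R(2) \to M \to k \to 0$ is exactly what that construction produces. The computation that $R\cdot e_2$ is free is right, though it is cleanest to argue over $S$: if $(0,r) = A(a,b)^T$ in $S$ then $(a,b)$ is a multiple of the Koszul syzygy $(y,-x)$, forcing $r \in (f_\lambda)$.

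In part~(2), your verification that $(\varphi_p,\psi_p)$ are chain maps of matrix factorizations is fine (and indeed the identities $A\psi_p+\varphi_pB=0$ and $B\varphi_p+\psi_pA=0$ hold on the nose in $S$, with the $f_\lambda/x$ terms cancelling as you say). The gap is in showing that $\varphi_0,\varphi_\infty$ are \emph{linearly independent in the stable category}. Your heuristic that ``$\varphi_\infty$ features only $x$-monomials while $\varphi_0$ features only $y$-monomials'' is simply false: one computes $f_x/y = \tfrac{1}{4}(3x^2 - 2(1+\lambda)xy + \lambda y^2)$ and $f_y/x = \tfrac{1}{4}(x^2 - 2(1+\lambda)xy + 3\lambda y^2)$, both of which mix the variables. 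And even were it true, a monomial pattern in the matrix entries says nothing a priori about null-homotopy, nor does it pin down the image under the tilting equivalence.

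The paper closes this gap by composing with the MCM approximation $\pi: k^{st}(-2) \to k(-2)$ from part~(1), which induces an isomorphism
\[
\underline{{\rm Hom}}\bigl(k^{st}(-1)[-1],\,k^{st}(-2)\bigr)\ \xrightarrow{\ \cong\ }\ \underline{{\rm Ext}}^0\bigl(k^{st}(-1)[-1],\,k(-2)\bigr).
\]
Since $\pi$ is projection onto the first coordinate, $\pi\circ\varphi_0$ and $\pi\circ\varphi_\infty$ are the first rows $(0,1)$ and $(1,0)$, and a degree count shows ${\rm Hom}_{gr}(C_{\pm 1},k(-2))=0$, so these are literally the standard basis of the Tate cohomology group. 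This is the missing step you need; the appeal to $H^0(\X,\mathcal{O}(\vec{c}))$ only tells you the dimension is $2$, not which two chain maps span it.
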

\begin{proof} Part (1) follows from the Tate resolution \cite{Eis}. For part (2), note that the MCM approximation $k^{st}(-2) \to k(-2)$ corresponds to the natural projection and induces natural isomorphisms on Tate cohomology
\[
	\underline{{\rm Ext}}^0_{gr R}\left(k^{st}(-1)[-1], k^{st}(-2)\right) \xrightarrow{\cong} \underline{{\rm Ext}}^0_{gr R}\left(k^{st}(-1)[-1], k(-2)\right).
\]
The morphisms $\phi_0, \phi_\infty$ descend to the natural basis on the latter. \end{proof}
Now let $\phi_p = p_1 \phi_0 + p_0 \phi_\infty$. Taking $M_p = Cone(\phi_p)$ yields a $4 \times 4$ matrix factorization
\[
	\xymatrix@R0.01pc@C5pc{S(-1) & S(-2) & S(-5)\\
	S(1) & \ar@<2ex>[l]_{\begin{pmatrix} A & 0 \\ -\varphi_p & A \end{pmatrix}} S(-2) & \ar@<2ex>[l]_{\begin{pmatrix} B & 0 \\ -\psi_p & B \end{pmatrix}} S(-3)\\
		S(-2) & S(-3) & S(-6)\\
		S & S(-3) & S(-4)
	}
\]
Comparing with the classification of Betti tables, this matrix factorization must be stably equivalent to a $2 \times 2$ matrix factorization. Direct calculations show the following:
\begin{prop}\label{ordinaryskyscraper} The module $M_p$ is given by the reduced matrix factorization
			\[
				\xymatrix@C2pc{S(-1) \oplus S & \ar[l]_-{A_p} S(-2) \oplus S(-3) & \ar[l]_-{B_p} S(-5) \oplus S(-4)}
			\]
			where $(A_p, B_p)$ for $p_1 \neq 0$ are given by
			\begin{align*}
				A_p = \begin{pmatrix} x - \frac{p_0}{p_1} y & \frac{1}{p_1}y^2 \\ -p_0 \frac{f_\lambda}{xy} & \frac{f_\lambda}{x} \end{pmatrix} && B_p = \begin{pmatrix} \frac{f_\lambda}{x} & -\frac{1}{p_1}y^2 \\ p_0 \frac{f_\lambda}{xy} & x - \frac{p_0}{p_1} y \end{pmatrix} 
			\end{align*}
			and for $p_1 = 0$, $p_0 \neq 0$ by
			\begin{align*}
				A_p = \begin{pmatrix} y & x^2 \\ 0 & \frac{f_\lambda}{y} \end{pmatrix} && B_p = \begin{pmatrix} \frac{f_\lambda}{y} & -x^2 \\ 0 & y \end{pmatrix}
			\end{align*}
\end{prop}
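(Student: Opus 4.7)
The plan is to form $M_p = \mathrm{Cone}(\phi_p)$ as the explicit $4\times 4$ graded matrix factorization displayed just above the statement, whose off-diagonal blocks come from $\varphi_p = p_1\varphi_0 + p_0\varphi_\infty$ and $\psi_p = p_1\psi_0 + p_0\psi_\infty$ of Proposition~\ref{stabresidue}. Writing these out,
\begin{align*}
\varphi_p &= \begin{pmatrix} p_0 & p_1 \\ p_0 f_x/y & -p_1 f_y/x \end{pmatrix}, &
\psi_p &= \begin{pmatrix} -p_1 f_y/x & -p_1 \\ -p_0 f_x/y & p_0 \end{pmatrix},
\end{align*}
so both contain a scalar entry in $k^{\times}$: namely $p_1$ (at $(1,2)$ of $\varphi_p$ and $(1,2)$ of $\psi_p$) when $p_1\neq 0$, or else $p_0$ (at $(1,1)$ of $\varphi_p$ and $(2,2)$ of $\psi_p$) when $p_1 = 0$, $p_0\neq 0$. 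These unit entries are exactly what is needed to reduce the $4\times 4$ factorization to a $2\times 2$ one.

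My strategy is to use the unit entries to peel off two trivial direct summands via the standard reduction lemma for matrix factorizations: a unit entry of $\Phi$ connecting two free summands of the same internal degree allows paired elementary operations $\Phi \mapsto P\Phi Q$, $\Psi \mapsto Q^{-1}\Psi P^{-1}$ with $P, Q$ block-triangular units, clearing that row and column and splitting off a trivial summand $(1, f_\lambda)$ that is zero in $\mathrm{HMF}$. For $p_1 \neq 0$, I first use the $p_1$-entry in the lower-left block $-\varphi_p$ to eliminate one row of the upper $A$ and one column of the lower $A$, with the matching dual operation on $\Psi$. The denominators $f_x/y$ and $f_y/x$ that appear in the reduction are in fact polynomials by $y \mid f_x$ and $x \mid f_y$ (these divisibilities are immediate from the explicit form of $f_\lambda$ together with $f_\lambda = xf_x + yf_y$), and after the operations the off-diagonal slots of the remaining $3\times 3$ blocks contain $f_\lambda/x$ and $p_0 f_\lambda/(xy)$. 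A second reduction, using the unit entry left in $\psi_p$, trims the factorization to $2\times 2$, at which point one reads off exactly the pair $(A_p, B_p)$ claimed. The case $p_1 = 0$, $p_0 \neq 0$ proceeds symmetrically through the $(1,1)$ entries.

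The final step is to verify directly that $A_p B_p = B_p A_p = f_\lambda \cdot I_2$, which is a short $2\times 2$ calculation resting on $(f_\lambda/x) \cdot x = f_\lambda$, with the off-diagonal entries canceling by inspection. The main obstacle is the bookkeeping of grading shifts together with the coupling of the row/column operations on $\Phi$ and $\Psi$; the cleanest way around this is to treat each reduction formally as a change of basis $P, Q$ in the source and target, and to verify at the end that the resulting $2\times 2$ matrices satisfy the matrix factorization identity independently of the reduction path. Since cancellation of trivial summands is an equivalence in $\mathrm{HMF}$, the reduced $(A_p, B_p)$ presents the same module $M_p$.
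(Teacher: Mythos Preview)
Your proposal is correct and is precisely the ``direct calculation'' the paper alludes to but does not write out: form the $4\times 4$ cone, locate the unit entries of $\varphi_p$ and $\psi_p$, and strip off two trivial summands by paired elementary operations. The only cosmetic difference is that the paper first invokes the Betti table classification to know in advance that a $2\times 2$ reduction must exist, whereas you simply exhibit the units; either way the computation is the same.
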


Recall that the modules $L_i(-3)[1] = R/(\frac{f_\lambda}{l_i})$ correspond to the simple sheaves $S_{i, 0}$, where $L_i = R/l_i$  . From the above presentation, one verifies
\begin{lem} For each $p_i = V(l_i) \in \mathbb{P}^1(k)$, there are short exact sequences of MCM modules
	\[
		0 \to	R/l_i(-1) \to M_{p_i} \to R/(\frac{f_\lambda}{l_i}) \to 0.
	\]
\end{lem}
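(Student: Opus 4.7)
Since the statement is asserted to follow ``from the above presentation,'' my plan is to verify it by a direct manipulation of the matrix factorization $(A_{p_i}, B_{p_i})$ of Proposition~\ref{ordinaryskyscraper}. The four cases $i = 1, 2, 3, 4$ correspond to $p_i = [0{:}1], [1{:}0], [1{:}1], [\lambda{:}1]$ in $\mathbb{P}^1(k)$. For $i = 1, 2$ the matrix $A_{p_i}$ is already upper-triangular with diagonal entries $l_i$ and $f_\lambda/l_i$. For $i = 3, 4$ the $(2,1)$-entry of $A_{p_i}$ is non-zero but divisible by $l_i$; I would apply a single elementary row operation $R_2 \leftarrow R_2 + c \cdot R_1$, where $c$ is (up to sign) the product of the two linear forms $l_j$ with $j \neq i$ other than $l_2 = y$ (or $l_1 = x$), chosen so as to cancel the $(2,1)$-entry. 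A short manipulation using $f_\lambda = l_1 l_2 l_3 l_4$ shows that the resulting $(2,2)$-entry collapses to exactly $f_\lambda/l_i$, so that after the base change of the target we obtain the upper-triangular form
\[
	A_{p_i} \;\sim\; \begin{pmatrix} l_i & * \\ 0 & f_\lambda/l_i \end{pmatrix}.
\]

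From this upper-triangular presentation, $M_{p_i}$ has generators $e_1, e_2$ (of suitable degree) with defining relations $l_i \cdot e_1 = 0$ and $* \cdot e_1 + (f_\lambda/l_i) \cdot e_2 = 0$. I would then compute the annihilator of $e_1$ by a short syzygy argument in $R$: if $\alpha l_i + \beta(f_\lambda/l_i) \in (f_\lambda)$ in $S$, then $\beta \in (l_i)$, forcing $\alpha l_i + \beta \cdot (\text{syzygy entry}) \in (l_i)$. Hence the submodule $R \cdot e_1 \subset M_{p_i}$ is isomorphic to $R/(l_i)(-1) = L_i(-1)$, while the quotient is generated by the image of $e_2$ modulo $(f_\lambda/l_i) \cdot e_2 = 0$ and so is isomorphic to $R/(f_\lambda/l_i)$. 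The resulting sequence $0 \to L_i(-1) \to M_{p_i} \to R/(f_\lambda/l_i) \to 0$ is automatically exact as graded $R$-modules, and each term is MCM (the outer two via the matrix factorizations $(l_i, f_\lambda/l_i)$ and $(f_\lambda/l_i, l_i)$, the middle term by construction).

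The main obstacle is the explicit bookkeeping for $i = 3, 4$: selecting the correct multiplier $c$ and verifying that the $(2,2)$-entry simplifies exactly to $f_\lambda/l_i$ after row reduction. A useful conceptual cross-check comes from the sheaf side: the module $M_{p_i}$ corresponds to the ordinary skyscraper $S_{x}$ at the exceptional point $x = \lambda_i$, which in the rank-two tube has a unique length-two filtration $0 \to S_{i,1} \to S_x \to S_{i,0} \to 0$. Using the identification $\mathbb{S}_R = (2) = (-\otimes\omega_\X)[1]$, so that $\otimes\omega_\X = (2)[-1]$, one finds $S_{i,1} = S_{i,0} \otimes \omega_\X \leftrightarrow L_i(-3)[1](2)[-1] = L_i(-1)$, matching the sequence above. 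This sanity check confirms the direction of the extension and the identification of the sub and quotient.
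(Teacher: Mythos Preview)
Your proposal is correct and is precisely the direct verification the paper alludes to with ``From the above presentation, one verifies''; the paper gives no further details. Your row-reduction computations for $i=3,4$ check out (e.g.\ for $i=3$, adding $l_4$ times row~1 to row~2 kills the $(2,1)$-entry and collapses the $(2,2)$-entry to $f_\lambda/l_3$), and the snake-lemma/annihilator argument extracting the short exact sequence from the upper-triangular form is standard. The sheaf-side cross-check via $S_{i,1}=S_{i,0}\otimes\omega_\X \leftrightarrow L_i(-1)$ is a nice confirmation of the direction of the extension.
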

Hence the $M_{p_i}$ corresponds to $S_{p_i}$. Summarising, we have shown
\begin{prop} The indecomposable MCM modules 
	\[
		\{ M_p \}_{p \neq 0, 1, \infty, \lambda} \cup \{R/(\frac{f_\lambda}{l_i}), R/l_i(-1) \}_{i=1,2,3,4}
	\]
	correspond under the equivalence of corollary \ref{wpl} to the simple torsion sheaves in $\mathcal{C}_\infty$. 
\end{prop}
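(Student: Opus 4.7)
The plan is to verify the correspondence class by class, matching the three families of modules to the three types of simples in $\mathcal{C}_\infty$: the ordinary skyscrapers $S_p$ for $p \in \mathbb{P}^1(k) \setminus \{0,1,\infty,\lambda\}$ and the eight exceptional simples $S_{i,0}$, $S_{i,1}$ for $i = 1, \ldots, 4$. The count of objects on the two sides already matches, so it suffices to exhibit the bijection.

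\textbf{Ordinary points.} Under the equivalence of corollary \ref{wpl}, the exceptional pair $(k^{st}(-1)[-1], k^{st}(-2))$ is sent to $(\mathcal{O}(-\vec{c}), \mathcal{O})$, and the two-dimensional morphism space identifies with $\mathrm{Hom}_\X(\mathcal{O}(-\vec{c}), \mathcal{O}) = H^0(\X, \mathcal{O}(\vec{c}))$. Since $\mathcal{O}(\vec{c})$ is pulled back from $\mathcal{O}_{\mathbb{P}^1}(1)$, this space is $k\langle X, Y\rangle$, and its projective space of nonzero cosections is naturally identified with $\mathbb{P}^1(k)$. I would first argue that the basis $(\phi_0, \phi_\infty)$ of proposition \ref{stabresidue} is sent, up to scaling, to $(Y, X)$ so that $\phi_p = p_1 \phi_0 + p_0 \phi_\infty$ corresponds to the cosection $s_p$ vanishing at $p = [p_0:p_1]$. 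Then $M_p = \mathrm{cone}(\phi_p)$ corresponds to $\mathrm{coker}(s_p)$, which is the skyscraper $S_p$. For $p \notin \{0, 1, \infty, \lambda\}$ the point is ordinary and $S_p$ is simple in $\mathcal{C}_\infty$.

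\textbf{Exceptional points.} From corollary \ref{wpl} we already have $R/(f_\lambda/l_i) = L_i(-3)[1] \leftrightarrow S_{i, 0}$. For $R/l_i(-1)$, I would apply the cone construction of Step 1 at the exceptional point $p_i = V(l_i)$: the same argument yields $M_{p_i} \leftrightarrow S_{p_i}$, where $S_{p_i}$ is the length-two ``ordinary'' torsion sheaf at $\lambda_i$ defined by $0 \to \mathcal{O}(-\vec{c}) \to \mathcal{O} \to S_{p_i} \to 0$. Being a quotient of $\mathcal{O}$, the sheaf $S_{p_i}$ has simple top $S_{i, 0}$ (the unique simple with a nonzero section), so its composition series reads
\[
	0 \to S_{i, 1} \to S_{p_i} \to S_{i, 0} \to 0.
\]
This is precisely the image under the equivalence of the short exact sequence of the preceding lemma
\[
	0 \to R/l_i(-1) \to M_{p_i} \to R/(f_\lambda/l_i) \to 0,
\]
and matching the outer terms (using that the rightmost term corresponds to $S_{i, 0}$) forces $R/l_i(-1) \leftrightarrow S_{i, 1}$.

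\textbf{Main obstacle.} The chief subtlety lies in Step 1: pinning down that the basis $(\phi_0, \phi_\infty)$ goes to the basis $(Y, X)$ of $H^0(\mathcal{O}(\vec{c}))$ with the correct normalization, so that $p_i = V(l_i) = \lambda_i$ really corresponds to the exceptional point $\lambda_i$ of $\X$. One way to secure this is to use the explicit matrix factorizations of proposition \ref{stabresidue} to compute the cokernel at $(p_0, p_1) = (0,1)$ and $(1,0)$ by hand, comparing with the expected shape of $S_0$ and $S_\infty$; alternatively, one can argue categorically that both parameterizations are Zariski-local families of length-one objects extending the exceptional pair, hence agree up to an automorphism of $\mathbb{P}^1$, and then fix the identification by matching any two of the exceptional positions via the lemma used in Step 2.
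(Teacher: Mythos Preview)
Your proposal is correct and follows essentially the same route as the paper. In the paper the proposition is not given a separate proof but is stated as a summary of the preceding discussion: $M_p$ is defined as the cone of $\phi_p$ and hence corresponds to the cokernel of a cosection $s_p$; corollary \ref{wpl} gives $R/(f_\lambda/l_i) \leftrightarrow S_{i,0}$; and the short exact sequence lemma at the four exceptional points both pins down the parameterization (yielding $M_{p_i} \leftrightarrow S_{p_i}$) and forces $R/l_i(-1) \leftrightarrow S_{i,1}$. You make the parameterization issue---your ``main obstacle''---more explicit than the paper does, and your second suggested fix (matching the identification at the exceptional positions via the lemma) is exactly how the paper handles it.
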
 
The remaining indecomposables are constructed from these by taking extensions and applying tubular mutations.  
\bibliographystyle{plain}
\bibliography{elliptic}

\end{document}